\newtheorem{theorem}{Theorem}[section]
\newtheorem{lemma}[theorem]{Lemma}
\newtheorem{proposition}[theorem]{Proposition}
\newtheorem{corollary}[theorem]{Corollary} 
\theoremstyle{definition}  
\newtheorem{definition}[theorem]{Definition}
\newtheorem{example}[theorem]{Example}
\newtheorem{remark}[theorem]{Remark}
\newcommand{\id}{\text{id}}
\newcommand{\Fun}{\text{Fun}}
\newcommand{\FPdim}{\text{FPdim}} 
\newcommand{\End}{\text{End}} 
\renewcommand{\Vec}{\text{Vec}}
\newcommand{\Hom}{\text{Hom}} 
\newcommand{\uHom}{\underline{\text{Hom}}}
\newcommand{\Rep}{\text{Rep}}
\newcommand{\B}{\mathcal{B}}
\newcommand{\C}{\mathcal{C}}
\newcommand{\D}{\mathcal{D}}
\newcommand{\E}{\mathcal{E}}
\newcommand{\Z}{\mathcal{Z}}
\newcommand{\M}{\mathcal{M}}
\newcommand{\N}{\mathcal{N}}
\newcommand{\be}{\mathbf{1}}
\renewcommand{\be}{\mathbf{1}}
\newcommand{\mB}{{\mathcal B}}
\newcommand{\mZ}{{\mathcal Z}}
\newcommand{\CC}{{\mathbb{C}}}
\newcommand{\bt}{\boxtimes}
\newcommand{\ot}{\otimes}
\begin{document}

\title{Weakly group-theoretical and solvable fusion categories}

\author{Pavel Etingof}
\address{P.E.: Department of Mathematics, Massachusetts Institute of Technology,
Cambridge, MA 02139, USA}
\email{etingof@math.mit.edu}

\author{Dmitri Nikshych}
\address{D.N.: Department of Mathematics and Statistics,
University of New Hampshire,  Durham, NH 03824, USA}
\email{nikshych@math.unh.edu}

\author{Victor Ostrik}
\address{V.O.: Department of Mathematics,
University of Oregon, Eugene, OR 97403, USA}
\email{vostrik@math.uoregon.edu}

\date{\today}
\maketitle  


\dedicatory{\bf To Izrail Moiseevich Gelfand on his 95th birthday
with admiration}

\section{Introduction and main results}

The goal of this paper is to introduce and study two classes of fusion
categories (over $\CC$): weakly
group-theoretical categories and solvable categories. 

Namely, recall (\cite{GNk}) that a fusion category $\C$ is said to
be {\em nilpotent} if there is a sequence of fusion categories 
$\C_0=\Vec,\, \C_1,\dots, \C_n = \C$
and a sequence $G_1, \dots G_n$ of finite groups 
such that $\C_{i}$ is obtained from $\C_{i-1}$ by a 
$G_i$-extension (i.e., $\C_i$ is faithfully graded by $G_i$ with
trivial component $\C_{i-1}$). Let us say that $\C$ is {\em cyclically
nilpotent} if the groups $G_i$ can be chosen to be cyclic (or,
equivalently, cyclic of
prime order). 

\begin{definition}\label{wgt}
A fusion category $\C$ is {\em weakly group-theoretical} if 
it is Morita equivalent to a nilpotent fusion category.
\footnote{It will be shown in a subsequent paper
that a weakly group-theoretical
category is in fact Morita equivalent to a nilpotent category of
nilpotency class $n=2$, i.e., to a group extension of a pointed
category. This should allow one to describe weakly group-theoretical
categories fairly explicitly in group-theoretical terms.}
\end{definition}

Here the notion of  {\em Morita equivalence} means the same as {\em weak monoidal Morita 
equivalence} introduced by M.~M\"uger in \cite{M2}. This is a categorical analogue of the familiar 
notion of Morita equivalence for rings.  

\begin{definition}\label{solv}
A fusion category 
$\C$ is {\em solvable}\footnote{This definition is motivated by the fact    
the category $\Rep(G)$ of representations of a finite group $G$ 
is solvable if and only if $G$ is a solvable group.} 
if any 
of the following two equivalent
conditions is
satisfied\footnote{The equivalence of these two conditions 
is proved in Proposition \ref{basicp2}.}: 
\begin{enumerate}
\item[(i)] $\C$ is Morita equivalent to a cyclically nilpotent fusion
category;
\item[(ii)] there is a sequence of fusion categories 
$\C_0=\Vec,\, \C_1,\dots, \C_n = \C$
and a sequence $G_1, \dots G_n$ of cyclic groups of prime order 
such that $\C_{i}$ is obtained from $\C_{i-1}$ either by a 
$G_i$-equivariantization or as a $G_i$-extension. 
\end{enumerate}
\end{definition}

Thus, the class of weakly group-theoretical categories contains 
the classes of solvable and group-theoretical categories
(i.e. those Morita equivalent to pointed categories, see \cite{ENO}
and Section~\ref{Sect 2.5} below). 
In fact, it contains all fusion categories we know which
are weakly integral, i.e., have integer Frobenius-Perron dimension.

Our first main result is the following characterization of fusion categories
Morita equivalent to group extensions of a given fusion category. 

\begin{theorem}
\label{Th0}
\label{Mor eq graded}
\label{Morita eq graded}
Let $\D$ be a fusion category and let $G$ be a finite group.
A fusion category $\C$ is Morita equivalent to a $G$-extension of $\D$ if and only if 
its Drinfeld center 
$\Z(\C)$ contains a Tannakian subcategory $\E=\Rep(G)$ such that
the de-equivariantization of $\E'$ by $\E$ is equivalent to $\Z(\D)$ as a braided tensor category.
\end{theorem}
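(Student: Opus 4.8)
The plan is to translate both sides into statements about the Drinfeld center and then compare. Recall that two fusion categories are weakly monoidally Morita equivalent if and only if their centers are equivalent as braided tensor categories (M\"uger \cite{M2}, \cite{ENO}); since $\Z(\C)$ is nondegenerate, this is the tool I will use throughout. First I would record that the de-equivariantization in the statement is well defined and braided: writing $\E'$ for the M\"uger centralizer of $\E=\Rep(G)$ in $\Z(\C)$, the symmetry of $\E$ gives $\E\subseteq\E'$, and since every object of $\E'$ centralizes $\E$ we have $\E\subseteq Z_2(\E')$, so de-equivariantizing the braided category $\E'$ by $\E$ produces a braided fusion category $(\E')_G$. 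The theorem then amounts to: $\Z(\C)$ contains such an $\E$ with $(\E')_G\simeq\Z(\D)$ if and only if $\Z(\C)\simeq\Z(\B)$ for some $G$-extension $\B$ of $\D$.

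For the forward implication I assume $\C$ is Morita equivalent to a $G$-extension $\B=\bigoplus_{g\in G}\B_g$ of $\D=\B_e$, so that $\Z(\C)\simeq\Z(\B)$ and it suffices to exhibit the required structure inside $\Z(\B)$. The faithful $G$-grading of $\B$ supplies a canonical Tannakian embedding $\Rep(G)\hookrightarrow\Z(\B)$, and the computation of centers of graded categories by Gelaki, Naidu and Nikshych identifies the de-equivariantization $\Z(\B)_G$ with the relative center of $\B$: a $G$-crossed braided category whose neutral component is $\Z(\D)$. By the general description of de-equivariantization, that neutral component is exactly $(\E')_G$, so $(\E')_G\simeq\Z(\D)$, and transporting through the braided equivalence $\Z(\C)\simeq\Z(\B)$ finishes this direction. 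As a consistency check, $\FPdim\E\cdot\FPdim\E'=\FPdim\Z(\B)=|G|^2(\FPdim\D)^2$ forces $\FPdim(\E')_G=(\FPdim\D)^2=\FPdim\Z(\D)$, as it must.

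For the converse I start from $\E=\Rep(G)\subseteq\Z(\C)$ with $(\E')_G\simeq\Z(\D)$ and must build a $G$-extension $\B$ of $\D$ with $\Z(\B)\simeq\Z(\C)$. De-equivariantizing the nondegenerate braided category $\Z(\C)$ by the Tannakian subcategory $\E$ gives a $G$-crossed braided fusion category $\Z:=\Z(\C)_G$ with neutral component $(\E')_G\simeq\Z(\D)$; thus $\Z$ is a $G$-crossed braided extension of the nondegenerate category $\Z(\D)$. To turn this into a genuine extension of $\D$ I would invoke the homotopy-theoretic classification of graded extensions: $G$-extensions of $\D$ are classified by maps $BG\to B\underline{\BrPic}(\D)$, and because $\underline{\BrPic}(\D)$ is equivalent to the Picard $2$-group of $\Z(\D)$ (Etingof, Nikshych and Ostrik), these same maps classify $G$-crossed braided extensions of $\Z(\D)$. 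Feeding $\Z$ into this dictionary yields a $G$-extension $\B$ of $\D$ whose relative center is $G$-crossed-braided equivalent to $\Z$; equivariantizing gives $\Z(\B)\simeq\Z^G=(\Z(\C)_G)^G\simeq\Z(\C)$, whence $\C$ and $\B$ are Morita equivalent.

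I expect the converse to be the main obstacle, and within it the step that realizes the abstract $G$-crossed braided category $\Z=\Z(\C)_G$ as the relative center of an actual fusion category $\B$. Concretely, one must verify that the classifying datum read off from $\Z$ lies in the image of the extension-classification map---equivalently, that the obstructions in $H^3(G,\text{Inv}\,\Z(\D))$ and $H^4(G,\CC^\times)$ governing the existence of $\B$ are automatically annihilated by the existence of $\Z$. This is precisely where the term-by-term matching of the Brauer--Picard $2$-group of $\D$ with the Picard $2$-group of $\Z(\D)$ is indispensable. By comparison the forward direction is close to a citation, reducing to the known description of the center of a graded fusion category.
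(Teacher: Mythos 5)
Your proposal has the same skeleton as the paper's proof: both directions are routed through the Drinfeld center, the forward implication is in essence the Gelaki--Naidu--Nikshych computation of the center of a graded category (Proposition~\ref{eqext1}(ii), whose proof the paper reproduces from \cite{GNN}), and everything hinges on the statement that two fusion categories are Morita equivalent if and only if their centers are braided equivalent. The genuine gap is at that hinge. You cite this biconditional to \cite{M2} and \cite{ENO}, but only the ``only if'' direction (Morita equivalence implies braided equivalence of centers) is there. The converse implication is exactly Theorem~\ref{Morita eq iff} of this paper; as the authors stress, it is the $G=1$ case of the very statement you are proving, had only been announced by Kitaev and M\"uger, and no proof was available in the literature. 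Most of Section~\ref{Sect 3} exists to prove it: the lemma that $A=I_\C(\be)$ is a commutative algebra in $\Z(\C)$ with $\C$ equivalent to the category of $A$-modules in $\Z(\C)$ and that $I_\C$ is a tensor functor, followed by the decomposition of $L=a^{-1}(I_\D(\be))$ into indecomposable algebras $L_i$ in $\C$ and the identification of $L_i$-bimodules in $\C$ with $L$-modules in $\Z(\C)$. A blind proof that treats this as a known citation is therefore incomplete precisely where the paper does its hardest work.

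Granting Theorem~\ref{Morita eq iff}, your converse is correct but takes a genuinely different route. You de-equivariantize $\Z(\C)$ by $\E$ to obtain a faithfully graded braided $G$-crossed extension of $\Z(\D)$, then appeal to the homotopy-theoretic classification of extensions and to the equivalence of $\underline{\BrPic}(\D)$ with the Picard $2$-group of $\Z(\D)$ to realize this crossed extension as the relative center of an honest $G$-extension $\B$ of $\D$; that machinery comes from a subsequent paper (``Fusion categories and homotopy theory'') and is neither available to nor used by the present one. The paper instead argues with a single concrete algebra: it transports the canonical algebra $A_1=I(\be)$ of $\Z(\D)\cong\Z$ along the forgetful functor of the de-equivariantization to get a commutative algebra $A$ in $\E'\subset\Z(\C)$, forms the fusion category of $A$-modules in $\C$, quotes \cite{DGNO2} for the braided equivalence $\Z(\C)\cong\Z(A\mbox{-mod}_\C)$ (so $\C$ and $A\mbox{-mod}_\C$ are Morita equivalent by Theorem~\ref{Morita eq iff}), notes that the free $A$-module functor kills $\E$, so that $A\mbox{-mod}_\C$ is a $G$-extension of some $\tilde{\D}$ with $\Z(\tilde{\D})\cong\Z(\D)$ by Proposition~\ref{eqext1}(i), and finally invokes Lemma~\ref{mor} to trade $\tilde{\D}$ for $\D$. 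Each route buys something: yours produces an extension of $\D$ itself directly and dissolves the obstruction question you raise (the crossed extension $\Z(\C)_G$ \emph{is} the classifying datum, so nothing remains to be checked), while the paper's stays elementary modulo \cite{DGNO2}, needs no Brauer--Picard formalism, and in exchange must prove the re-basing Lemma~\ref{mor}.
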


The precise definition of de-equivariantization can be found in Section~\ref{Sect 2.6} below and 
in \cite[Section 4]{DGNO2}. In the context of modular categories it is the {\em modularization}
construction introduced by A.~Brugui\`{e}res \cite{B} and M.~M\"uger \cite{M3}.

In the special case when $G$ is the trivial group, Theorem~\ref{Th0}  simply says that two fusion 
categories are Morita equivalent if and only if their  
centers are equivalent as braided categories (Theorem~\ref{Morita eq iff}). 
This important result, which answers a question of V.~Drinfeld, has been 
announced by A.~Kitaev and M.~M\"uger,  and is used in the proof of the more 
general Theorem~\ref{Th0}. Since, as far as we know, a proof of this result is 
unavailable in the literature, we give such a proof in the beginning of 
Section~\ref{Sect 3}.  That Morita equivalent fusion categories
have braided equivalent centers was shown by M\"uger in \cite{M2}; 
we prove the opposite implication.

Our second main result may be viewed as a strong form of 
Kaplansky's 6-th conjecture (stating that 
the dimension of an irreducible representation of a semisimple
Hopf algebra divides the dimension of the Hopf algebra) for
weakly group-theoretical fusion categories. 
To state it, we need the following definition. 

\begin{definition}
\label{strong Frobenius}
We will say that a fusion category $\C$ has the {\em strong
Frobenius} property if for every indecomposable $\C$-module 
category $\M$ and any simple object $X$ in $\M$
the number  $\tfrac{\FPdim(\C)}{\FPdim(X)}$ is an algebraic integer,
where the Frobenius-Perron dimensions in $\M$
are normalized in such a way that $\FPdim(\M) =\FPdim(\C)$.
\end{definition}

Obviously, the strong Frobenius property of a fusion category
implies the usual Frobenius property, i.e. that the
Frobenius-Perron dimension of any simple object divides the
Frobenius-Perron dimension of the category (indeed, it suffices
to take $\M=\C$). 

\begin{theorem}\label{Th1}
Any weakly group-theoretical fusion category has the strong Frobenius
property. 
\end{theorem}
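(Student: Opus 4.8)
The plan is to isolate two facts whose conjunction gives the theorem, exploiting that ``weakly group-theoretical'' means by definition ``Morita equivalent to nilpotent.'' Namely I would prove: (A) the strong Frobenius property is invariant under Morita equivalence; and (B) every nilpotent fusion category has the strong Frobenius property. Since a weakly group-theoretical $\C$ is Morita equivalent to some nilpotent $\D$, applying (B) to $\D$ and then (A) to the equivalence $\C\approx\D$ yields the claim. Throughout I use that Morita equivalent categories have equal Frobenius--Perron dimension, and that FP-dimensions of objects of a fusion subcategory agree with those computed in the ambient category.

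For (B) I would induct on the nilpotency length, the base case $\C=\Vec$ being trivial (its only indecomposable module category is $\Vec$ itself, with unique simple object of FP-dimension $1$). For the inductive step, let $\C=\bigoplus_{g\in G}\C_g$ be a $G$-extension of $\D=\C_e$, where $\D$ has the strong Frobenius property, and let $\M$ be an indecomposable $\C$-module category, normalized by $\FPdim(\M)=\FPdim(\C)=|G|\,\FPdim(\D)$. Restricting the action to $\D$ decomposes $\M=\bigoplus_{j=1}^k\M_j$ into indecomposable $\D$-module categories; because each $\C_g$ is an invertible $\D$-bimodule category, the group $G$ permutes the $\M_j$ transitively, so $k=[G:H]$ for the stabilizer $H$ of some $\M_1$, and all summands have equal dimension $\sum_{X\in\M_j}\FPdim(X)^2=\FPdim(\C)/k=|H|\,\FPdim(\D)$. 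The restriction to $\M_j$ of the FP-dimension function of $\M$ is a Frobenius--Perron dimension function for the $\D$-module $\M_j$, hence equals $\sqrt{|H|}$ times the one normalized by $\FPdim(\M_j)=\FPdim(\D)$; writing $\phi_j$ for the latter we get $\FPdim(X)=\sqrt{|H|}\,\phi_j(X)$ for $X\in\M_j$. Now the strong Frobenius property of $\D$ gives that $\FPdim(\D)/\phi_j(X)$ is an algebraic integer, and therefore
\[
\frac{\FPdim(\C)}{\FPdim(X)}=\frac{|G|\,\FPdim(\D)}{\sqrt{|H|}\,\phi_j(X)}=[G:H]\sqrt{|H|}\cdot\frac{\FPdim(\D)}{\phi_j(X)}
\]
is the product of the algebraic integer $[G:H]\sqrt{|H|}$ (note $\sqrt{|H|}$ is a root of $t^2-|H|$) with an algebraic integer, hence an algebraic integer. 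This completes the induction.

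The main obstacle is (A), the Morita invariance. The subtlety is that the bijection between indecomposable module categories over $\C$ and over a dual category $\D=\C^*_\N$ does \emph{not} match simple objects or their FP-dimensions one-to-one (for $\C=\Vec[G]$ versus $\D=\Rep(G)$, the regular $\C$-module has $|G|$ simple objects of dimension $1$, while the corresponding $\D$-module $\Rep(G)$ has the irreducible representations as its simples), so the property cannot be transported object by object. To handle this I would pass through internal Homs: a simple $X$ in an indecomposable $\C$-module $\M$ determines the algebra $A=\uHom(X,X)$ in $\C$ with $\FPdim(A)=\FPdim(X)^2$ and $\M\simeq\Amod$, so the strong Frobenius property is equivalent to the assertion that $\FPdim(\C)/\sqrt{\FPdim(A)}$ is an algebraic integer for all such $A$. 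I would then relate a module category $\M$ over $\D=\C^*_\N$ to $\widetilde{\M}=\N\bt_\D\M$ over $\C$ and track Frobenius--Perron dimensions through the relative tensor product, establishing a dimension identity that converts algebraic integrality for $\C$ into algebraic integrality for $\D$ --- essentially the same bookkeeping as in the graded step, but governed by the relative tensor product rather than by a grading. The center machinery behind Theorem~\ref{Th0} is available here, since Morita equivalent categories have braided equivalent centers. I expect this FP-dimension compatibility for relative tensor products to be the technical heart of the argument; once it is in place, (A) and (B) together prove the theorem.
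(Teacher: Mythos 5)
Your reduction (A)$+$(B)$\Rightarrow$Theorem is logically valid, and your part (B) is correct: it is essentially the paper's Proposition~\ref{extequi}(2) (the extension step for the strong $m$-Frobenius property), including the key relations $\FPdim_\M(X)=\sqrt{|H|}\,\phi_j(X)$ and the resulting algebraic integer $[G:H]\sqrt{|H|}$. But part (A), the Morita invariance of the strong Frobenius property, is not proved --- it is only announced --- and it is exactly where the entire difficulty of the theorem sits. Your sketch (internal Homs plus a ``dimension identity'' for $\N\boxtimes_\D\M$) founders on the obstruction you yourself point out: a Morita equivalence induces no correspondence between simple objects of matching module categories, so there is no identity relating $\FPdim_\M(X)$ for an individual simple $X$ to dimensions on the other side. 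A concrete test shows the transport cannot be mere bookkeeping: for $\C=\Vec_G$ the ratios in question are $[G:H]\sqrt{|H|}$, trivially algebraic integers, while for the Morita equivalent $\D=\Rep(G)$ and the module category of $\psi$-projective representations of $H\le G$ the ratios are $\tfrac{|H|}{\deg\rho}\sqrt{[G:H]}$, whose integrality is the classical theorem that degrees of irreducible projective representations of $H$ divide $|H|$. Any proof of (A) must generate such facts, so a formal FP-dimension compatibility for relative tensor products cannot suffice.

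The paper never proves (A); it sidesteps Morita transport by working with the Drinfeld center, which is \emph{the} Morita invariant (Theorem~\ref{Morita eq iff}). Note that you cannot feed the center into your (B): the center of a nilpotent category is in general not nilpotent (e.g.\ $\Z(\Vec_G)\supset\Rep(G)$ for non-nilpotent $G$), so (B) says nothing about it. Instead, the paper shows (Proposition~\ref{basicp1} together with Lemma~\ref{G-crossed}) that $\Z(\C)$ is an alternating tower of extensions \emph{and equivariantizations} over $\Vec$, and propagates the strong $m$-Frobenius property up this tower: equivariantization preserves $m$ (this step requires the Section~\ref{Sect 5} classification of module categories over $\C^G$, Proposition~\ref{CG-modules}, and again the projective-representation divisibility), while a $G$-extension improves $m$ to $m\sqrt{|G|}$. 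The conclusion (Corollary~\ref{exttfrob}) is that $\Z(\C)$ has the strong $\sqrt{\FPdim(\C)}$-Frobenius property --- strictly stronger than the strong Frobenius property --- and this surplus factor $\sqrt{\FPdim(\C)}$ is precisely the exchange rate when an indecomposable $\C$-module category $\M$ is pulled back along the forgetful functor $\Z(\C)\to\C$, since $\FPdim_{\widetilde\M}(X)=\sqrt{\FPdim(\C)}\,\FPdim_\M(X)$; dividing it out yields the strong Frobenius property of $\C$ itself. So to complete your proposal you would need either an actual proof of (A), which your route does not supply, or the center-plus-equivariantization machinery just described, which is the real content your outline is missing.
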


Finally, our third main result is an analog of Burnside's theorem for
fusion categories. 

\begin{theorem} \label{Th2} 
Any fusion category of Frobenius-Perron dimension $p^rq^s$, where
$p$ and $q$ are primes, and $r,s$ are nonnegative integers, is
solvable. 
\end{theorem}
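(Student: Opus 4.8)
The plan is to argue by induction on $\FPdim(\C)=p^rq^s$, locating the structure needed for a solvability filtration inside the Drinfeld center $\Z(\C)$. Before starting the induction I would record the permanence properties implicit in Definition \ref{solv}: the class of solvable categories is closed under $G$-extensions and under $G$-equivariantizations whenever $G$ is a \emph{solvable} finite group. Indeed, a composition series of $G$ with quotients cyclic of prime order refines a $G$-extension into a tower of extensions by $\mathbb{Z}/p$ and $\mathbb{Z}/q$ (by coarsening the grading through the normal subgroups), and similarly refines $\D^{G}\cong(\D^{N})^{G/N}$ into a tower of prime-cyclic equivariantizations; both are exactly the elementary steps allowed in Definition \ref{solv}(ii). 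With these in hand, assume the theorem for all smaller dimensions. If $\C$ is pointed it is cyclically nilpotent, since its group of invertibles is abelian and hence has a composition series with cyclic quotients of prime order. If $\FPdim(\C)$ is a prime power, then $\C$ is nilpotent by the theorem of Gelaki and Nikshych \cite{GNk}, and the grading groups that occur are $p$-groups, which possess central series with cyclic quotients, so $\C$ is again cyclically nilpotent and solvable. Thus I may assume $\C$ is non-pointed and $r,s\ge 1$.

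The engine of the induction is a reduction through the center. The category $\B:=\Z(\C)$ is an \emph{integral modular} category of dimension $p^{2r}q^{2s}$ (integral because $\FPdim(\C)$ is an integer), and it is non-pointed because $\C$ is. Suppose $\B$ contains a nontrivial Tannakian subcategory $\E\cong\Rep(G)$ with $G\neq 1$. Since $|G|$ divides $\FPdim(\B)=p^{2r}q^{2s}$, the group $G$ is a $\{p,q\}$-group and hence \emph{solvable} by the classical Burnside theorem for groups. Analyzing the forgetful functor $\Z(\C)\to\C$ restricted to $\E$, one shows that the presence of such an $\E$ forces $\C$ to be either a nontrivial $G$-extension or a nontrivial $G$-equivariantization of a fusion category $\D$ with $\FPdim(\D)<\FPdim(\C)$ and $\FPdim(\D)$ again of the form $p^{r'}q^{s'}$. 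By the inductive hypothesis $\D$ is solvable, and by the permanence properties above, $G$ being solvable, so is $\C$. (It is essential here that the two-prime hypothesis makes $G$ solvable: for a general dimension the group $G$ produced this way could be non-solvable, as the non-solvable integral category $\Rep(A_5)$ shows.)

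Everything therefore reduces to verifying the hypothesis of this reduction, namely the crux, which is the categorical counterpart of the character-theoretic heart of Burnside's theorem: \emph{a non-pointed integral modular category $\B$ with $\FPdim(\B)=p^aq^b$, $a,b\ge 1$, always contains a nontrivial Tannakian subcategory.} I would first produce, from the identity $\FPdim(\B)=\sum_X\FPdim(X)^2$ together with the divisibility of each $\FPdim(X)$ into $\FPdim(\B)$, a simple object $X$ of nontrivial prime-power dimension $p^k$, the analogue of a conjugacy class of prime-power size. I would then pass to the modular data and run the analogue of Burnside's central-character argument: the entries of the $S$-matrix satisfy integrality relations playing the role of character values, and, exactly as coprimality of $|C|$ with $\chi(1)$ forces $\chi(g)/\chi(1)$ to be an algebraic integer and $g$ to act as a scalar, the coprimality of $p^k$ with the $q$-part of $\FPdim(\B)$, combined with the algebraic-integrality furnished by the strong Frobenius property of Theorem \ref{Th1}, should force $X$ to "act by scalars" and thereby produce a nontrivial invertible object, hence (after Deligne's classification of symmetric categories) a nontrivial symmetric, and ultimately Tannakian, subcategory.

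The main obstacle is precisely this last lemma. The group-theoretic inputs (classical Burnside, nilpotence in prime-power dimension) and the permanence properties are comparatively routine, as is the structural lemma relating Tannakian subcategories of $\Z(\C)$ to gradings and equivariantizations of $\C$. The genuine difficulty is transplanting Burnside's algebraic-integer argument to the modular setting, where the $S$-matrix entries $S_{XY}$ must replace ordinary character values; in ruling out the degenerate possibility that every prime-power-dimensional simple object nonetheless fails to generate an invertible; and in handling the super-Tannakian (fermionic) case, where the symmetric subcategory one finds becomes genuinely Tannakian only after splitting off a harmless $\mathbb{Z}/2$, which is admissible since $2$ is prime.
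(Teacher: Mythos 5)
Your high-level architecture is the same as the paper's: induction on $\FPdim(\C)$, closure of solvability under extensions and equivariantizations, and reduction of everything to producing a nontrivial Tannakian subcategory of $\Z(\C)$. But that reduction target --- your ``crux'' --- is not an auxiliary lemma; it is the entire content of the theorem (it is the paper's Theorem~\ref{integ}, whose proof occupies Sections~\ref{Sect 7} and~\ref{Sect 8}), and you do not prove it. Moreover, the strategy you outline for it has two concrete defects. First, invoking Theorem~\ref{Th1} for the needed algebraic integrality is circular: Theorem~\ref{Th1} applies to weakly group-theoretical categories, and whether $\Z(\C)$ is weakly group-theoretical is precisely what is in question. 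The integrality that actually powers the argument is the Verlinde-formula fact that $s_{X,Y}/\FPdim(X)$ is an algebraic integer in a non-degenerate category, combined with the divisibility theorems of \cite{EG1,ENO} (Theorem~\ref{divis}); this is what Lemma~\ref{key} and Corollary~\ref{primpow} run on, and what they produce directly is a nontrivial \emph{symmetric} subcategory (invertible objects then come from Deligne's theorem plus the classical Burnside theorem for groups, as in Proposition~\ref{invob}) --- not an invertible object first, as your sketch has it.

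Second, and most seriously, the fermionic case cannot be settled by ``splitting off a harmless $\Bbb Z/2$.'' That device works only when the super-Tannakian category found corresponds to a pair $(G,u)$ with $|G|>2$, in which case $\Rep(G/(u))$ is a nontrivial Tannakian subcategory. The hard case, which your sketch leaves untouched, is when the maximal symmetric subcategory of $\Z(\C)$ is exactly ${\rm SuperVec}$: it contains no nontrivial Tannakian subcategory at all, it is degenerate so M\"uger's splitting theorem (Theorem~\ref{MuTh}) does not apply to it, and one cannot de-equivariantize by it as if it were Tannakian. The paper must instead pass to the M\"uger centralizer of ${\rm SuperVec}$, show it is slightly degenerate and contains an odd prime-power-dimensional simple object (Corollary~\ref{odd}), and prove a separate Burnside-type statement for slightly degenerate categories (Proposition~\ref{almnontan}, whose final contradiction ``$2$ is divisible by $p$'' requires $p$ odd, together with Propositions~\ref{almnon} and~\ref{p2}). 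This slice is roughly half of the actual proof and is not recoverable from your outline. A smaller gap: your claim that any nontrivial Tannakian $\Rep(G)\subset\Z(\C)$ forces $\C$ to be an extension or an equivariantization is false for general $G$, since the forgetful functor may be trivial on only part of $\Rep(G)$; the dichotomy holds when $G$ is cyclic of prime order, and since your $G$ is solvable you may pass to a subcategory $\Rep(G/N)$ with $G/N$ of prime order, but this step must be supplied.
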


Theorems \ref{Th1} and \ref{Th2} and the intermediate results
used in their proofs provide powerful methods for
studying weakly group-theoretical and solvable
fusion categories, in particular those of dimension $p^rq^s$, and
of (quasi)Hopf algebras associated to them. As an illustration,
we show that any non-pointed simple weakly group-theoretical fusion category 
is equivalent to $\Rep(G)$ for a finite non-abelian simple group $G$, 
and that any fusion category of
dimension 60 is weakly group-theoretical (in particular, if 
it is simple, it is equivalent to $\Rep(A_5)$). 
We also show that any fusion category of dimension $pqr$ (where $p,q,r$
are distinct primes) and any semisimple Hopf algebra 
of dimension $pqr$ or $pq^2$ is group-theoretical, and classify
such Hopf algebras. However, most of such applications
will be discussed in future publications. In particular,
the classification of fusion categories of dimension $pq^2$, 
where $q$ and $p$ are primes, is
given in the paper \cite{JL} 
(as pointed out in \cite{ENO}, not all such categories are
group-theoretical, already for $p=2$).  

The structure of the paper is as follows. In Section~\ref{Sect 2}, 
we discuss preliminaries. In Section~\ref{Sect 3} we 
prove Theorem~\ref{Th0}. 
In Section~\ref{Sect 4}, we state and prove the basic 
properties of weakly group-theoretical and solvable fusion categories. 
In Section~\ref{Sect 5},  we describe module categories over
equivariantizations; this description 
needed for the proof of Theorem \ref{Th1}. 
In Section~\ref{Sect 6}, we prove Theorem \ref{Th1}. 
In Section~\ref{Sect 7}, we prove some important properties of non-degenerate and
slightly degenerate categories containing a simple object of prime
power dimension, which are needed for the proof of Theorem
\ref{Th2}. In particular, Corollary~\ref{primpow} is similar to the classical
Burnside Lemma in the theory of group representations.
In Section~\ref{Sect 8}, we prove Theorem \ref{Th2}. 
In Section~\ref{Sect 9}, we discuss applications of our results to 
concrete problems in the theory of fusion categories 
and Hopf algebras. In Section~\ref{Sect 10} we briefly discuss the relation 
between our results and classification
results on semisimple Hopf algebras and fusion categories 
available in the literature.
Finally, in Section~\ref{Sect 11} we formulate some open questions. 

{\bf Acknowledgments.} 
We are deeply grateful to V.~Drinfeld for many inspiring
conversations. Without his influence,  
this paper would not have been written. 
We also thank S.~Gelaki and D.~Naidu for useful discussions. 
The work of P.E.\ was  partially supported
by the NSF grant DMS-0504847.  The work of D.N.\ was partially supported 
by the NSA grant H98230-07-1-0081 and the NSF grant DMS-0800545.
The work of V.O.\ was  partially supported
by the NSF grant DMS-0602263.

\section{Preliminaries} 
\label{Sect 2}
In this paper, we will freely use the basic theory of fusion categories, module categories over them, 
Frobenius-Perron dimensions, and modular categories. For basics
on these topics, we refer the reader to  
\cite{BK,O1,ENO, DGNO2}.\footnote{All fusion categories and (quasi)Hopf
algebras in this paper
will be over $\Bbb C$ (which can be replaced by any algebraically
closed field of characteristic zero). 
All module categories will be left module categories. For a fusion category $\C$ we use 
notation $\Z(\C)$ for its Drinfeld center 
(see e.g. \cite[\S 2.9]{DGNO2}).}
However, for reader's convenience, we recall
some of the most important definitions and facts 
that are used below.

\subsection{Graded categories} (\cite{ENO,GNk})

Let $\C$ be a fusion category and let $G$ be a finite group.
We say that $\C$ is {\em graded} by $G$ if $\C=\bigoplus_{g\in G}\C_g$,
and for any $g,h\in G$, one has 
$\otimes: \C_g\times \C_h\to \C_{gh}$, $*:\C_g\to
\C_{g^{-1}}$. The fusion category $\C_e$ corresponding to the neutral element
$e\in G$ is called {\em the trivial component} of the $G$-graded
category $\C$. A grading is {\em faithful} if $\C_g\ne 0$ for all $g\in
G$. If $\C$ is faithfully graded by $G$, one says that $\C$ is a $G$-{\em
extension} of $\C_e$. The {\em adjoint category} $\C_{\rm ad}$ is the smallest
fusion subcategory of $\C$ containing all objects $X\otimes X^*$, where
$X\in \C$ is simple. 

There exists a unique faithful grading of $\C$ for which $\C_e=\C_{\rm ad}$. 
It is called {\em the universal grading} of $\C$. The
corresponding group is called {\em the universal grading group}
of $\C$, and denoted by $U_\C$. All faithful gradings of $\C$ are
induced by the universal grading, in the sense that for any
faithful grading  $U_\C$ canonically projects onto the grading group $G$, and
$\C_e$ contains $\C_{\rm ad}$. 

A fusion category $\C$ is said to be {\em nilpotent} if 
it can be reduced to the category of vector spaces by iterating
the operation of taking the adjoint category. This is equivalent
to the condition that $\C$ can be
included into a chain ${\rm Vec}=\C_0\subset
\C_1\subset...\subset \C_n=\C$, where each $\C_i$ is faithfully graded by
a finite group $G_i$, and has trivial component $\C_{i-1}$. 

The simplest example of a nilpotent category is a {\em pointed}
category, i.e. a fusion category where all simple objects are
invertible. Such a category is the category of vector
spaces graded by some finite group $G$ 
with associativity defined by a cohomology class
$\omega\in H^3(G,\Bbb C^*)$, denoted by ${\rm
Vec}_{G,\omega}$. If $\omega=1$, we denote this category by ${\rm
Vec}_G$. 

\subsection{Frobenius-Perron dimensions in a module category}   

Let $\C$ be a fusion category, and $\M$ an indecomposable module
category over $\C$. Let $M_i, i\in I,$ be the simple objects of $\M$. 
Then it follows from the Frobenius-Perron
theorem that there exists a unique, up to a common factor, 
collection of positive numbers $d_i, i\in I$, such that 
whenever $X\in \C$, and $X\otimes M_i=\oplus_{j\in I} N_{ij}(X)M_j$, 
one has ${\rm FPdim}(X)d_i=\sum_{j\in I} N_{ij}(X)d_j$. 
We will normalize $d_i$ in such a way that $\sum_{i\in
I}d_i^2={\rm FPdim}(\C)$. The numbers $d_i$ normalized in such a
way are called {\rm the Frobenius-Perron
dimensions} of $M_i$. By additivity, this defines the
Frobenius-Perron dimension of any object of~$\M$. 

\subsection{Weakly integral and integral categories} 

A fusion category is said to be {\em weakly integral}, 
if its Frobenius-Perron dimension is an integer. Recall
\cite[Proposition 8.27]{ENO} that in such a category, the Frobenius-Perron 
dimension of any simple object is the square root of an
integer. 

A fusion category is called {\em integral} if the Frobenius-Perron dimension 
of every (simple) object is an integer. A weakly  integral fusion category is
automatically pseudounitary and has a canonical spherical structure
with respect to which categorical dimensions coincide with the Frobenius-Perron 
dimensions \cite[Propositions 8.23, 8.24]{ENO}.


\subsection{Tannakian categories}

Recall (\cite{D1}) that a symmetric fusion category $\C$ is {\em Tannakian} 
if it is equivalent to the representation category of a finite
group as a symmetric fusion category. More generally, let us say
that $\C$ is {\em super-Tannakian} if there exists a finite group $G$ and a
central element $u\in G$ of order 2, such that $\C$, as a
symmetric category, is equivalent to the category of
representations of $G$ on super vector spaces, 
on which $u$ acts by the parity operator. 

\begin{theorem}(Deligne's 
theorem, \cite{D2})\label{DeT} Any symmetric fusion category is
super-Tannakian. 
\end{theorem}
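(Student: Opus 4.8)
The statement is Deligne's theorem, so the plan is to indicate how its proof goes rather than to derive it from the preliminaries above. The strategy is to reduce the problem for a symmetric fusion category $\C$ to the construction of a symmetric monoidal (fiber) functor $F\colon \C \to \mathrm{sVec}$ into the category of finite-dimensional super vector spaces, and then to apply super-Tannakian reconstruction: setting $G = \underline{\Aut}^{\otimes}(F)$ produces an affine supergroup scheme together with its parity automorphism $u$, and $\C \simeq \Rep(G,u)$ as a symmetric category. Because $\C$ is a fusion category — finitely many isomorphism classes of simple objects and finite-dimensional $\Hom$ spaces — Deligne's growth hypothesis (that each object be annihilated by a Schur functor, equivalently that the lengths of $X^{\otimes n}$ grow subexponentially) is automatic, and the resulting $G$ is a finite supergroup, i.e. the data of a finite group $G$ and a central element $u$ of order $2$ acting by parity. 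This is exactly the super-Tannakian conclusion, and it specializes to the Tannakian case $\Rep(G)$ precisely when no odd objects occur.

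The whole difficulty is therefore concentrated in producing $F$, and this is where the nontrivial input lies. The obstruction to an ordinary fiber functor $\C \to \Vec$ (which would make $\C$ Tannakian) is the possible presence of objects carrying a $-1$ contribution to their dimension: in $\Vec$ the symmetric braiding forces nonnegative integer dimensions, whereas a symmetric fusion category may contain fermionic objects on which the self-braiding $c_{X,X}$ acts by $-1$. Passing to $\mathrm{sVec}$ remedies this precisely because odd super vector spaces supply the compensating sign, so that the super-dimension agrees with the categorical dimension. The first substantive step is thus to show that categorical dimensions in $\C$ are integers: since the braiding is symmetric, $c_{X,X}$ is an involution and splits $X \otimes X = S^2X \oplus \wedge^2 X$; iterating yields the Schur functors $S_\lambda$ on $\C$, and an analysis of the objects $S_\lambda(X)$ forces $\dim X \in \mathbb{Z}$ together with a decomposition of each object into an even part (some $\wedge^n$ vanishes) and an odd part (some $S^n$ vanishes). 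This parity dichotomy is the source of the central involution $u$ and of the $\mathbb{Z}/2$-grading on $\mathrm{sVec}$; one then builds $F$ by assigning to each $X$ a super vector space whose super-dimension equals $\dim X$ and verifying compatibility with tensor product and symmetry.

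The hard part will be exactly this construction of $F$ and the proof that the Schur-functor and growth estimates genuinely force both integrality and the even/odd splitting. These estimates are the technical heart of Deligne's argument and cannot be shortcut using only the notions recalled in Section~\ref{Sect 2}. For this reason the honest route — the one the authors take — is to invoke \cite{D2} directly, and to treat the sketch above as an explanation of why the super-Tannakian, rather than merely Tannakian, conclusion is the correct target: the category of super vector spaces is the minimal enlargement of $\Vec$ in which fermionic objects of the expected negative categorical dimension can be faithfully realized.
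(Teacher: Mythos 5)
The paper does not prove this statement either: it is quoted as Deligne's theorem with a citation to \cite{D2}, which is exactly what you conclude one must do, and your sketch of Deligne's argument (integrality of categorical dimensions via Schur functors, construction of a symmetric fiber functor to super vector spaces, then super-Tannakian reconstruction yielding a finite group with a central order-2 element acting by parity) is an accurate account of why the cited result applies, with the growth hypothesis indeed automatic for fusion categories since $\mathrm{length}(X^{\otimes n})\le \FPdim(X)^n$. So your proposal matches the paper's treatment, and the added outline is a faithful summary of the external proof being invoked.
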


In particular, if $\C$ has Frobenius-Perron
dimension bigger than $2$, then it contains a nontrivial Tannakian 
subcategory (the category of representations of $G/(u)$). 

\subsection{Morita equivalence} (\cite{M2}; see also \cite{ENO, O1})
\label{Sect 2.5}

One says that two fusion categories $\C$ and $\D$ are 
{\em Morita equivalent} if $\D\cong (\C_\M^*)^{op}$ for some 
indecomposable $\C$-module category $\M$ (the category of 
$\C$-module endofunctors of $\M$, with opposite composition).
Equivalently, there is an algebra $A$ in $\C$ such that $\D$ is equivalent
to the category of $A$-bimodules in $\C$.

The above is an equivalence relation on fusion categories of a given
Frobenius-Perron dimension. A fusion category is said to be 
{\it group-theoretical} if it is Morita equivalent to a pointed
category. A (quasi)Hopf algebra is group-theoretical if
its representation category is group-theoretical. 

\subsection{Equivariantization and de-equivariantization (\cite{B, M3} and \cite[Section 4]{DGNO2})}
\label{Sect 2.6}

Let $\C$ be a fusion category with an action of a finite 
group $G$. In this case one can define the fusion category $\C^G$ of 
$G$-equivariant objects in $\C$. 
Objects of this category are objects $X$ of $\C$ equipped with 
an isomorphism $u_g: g(X)\to X$ for all $g\in G$, such that
$$
u_{gh}\circ \gamma_{g,h}=u_g\circ g(u_h),
$$
where $\gamma_{g,h}: g(h(X))\to gh(X)$ is the natural isomorphism
associated to the action. Morphisms and tensor
product of equivariant objects 
are defined in an obvious way. This category is called
the {\em $G$-equivariantization} of $\C$. One has 
$\FPdim(\C^G)=|G|\FPdim(\C)$. 

For example, $\Vec^G=\Rep(G)$ (for the trivial action of $G$
on $\Vec$). A more interesting example is the following. 
Let $K$ be a normal subgroup of $G$. Then we have a natural action of $G/K$ on
$\Rep(K)$, and $\Rep(K)^{G/K}=\Rep(G)$.

There is a procedure opposite to equivariantization, called the 
{\em de-equivariantiza\-tion}.
In the context of modular categories it is the {\em modularization}
construction introduced by A.~Brugui\`{e}res and M.~M\"uger, see Remark~\ref{modularization}
below.  It is also closely related to the dynamical extensions of monoidal
categories of J.~Donin and A.~Mudrov \cite{DM}.

Namely, let $\C$ be a fusion category  and let
$\E= \Rep(G) \subset \Z(\C)$  be a Tannakian subcategory which embeds into $\C$
via the forgetful functor $\Z(\C)\to \C$. 
Let $A$ be the algebra in $\Z(\C)$ corresponding to the algebra
$\mbox{Fun}(G)$ of functions on $G$ under the above embedding.
It is a commutative algebra in $\Z(\C)$ and so
the category $\C_G$ of left $A$-modules in $\C$ is a
fusion category, called {\em de-equivariantization} of $\C$ by $\E$. 
The  free module functor $\C \to \C_G: X \mapsto A\ot X$ is
a surjective tensor functor. One has $\FPdim(\C_G) = \FPdim(\C)/ |G|$.

The above constructions are canonically inverse to each other, i.e., there are
canonical equivalences $(\C_G)^G \cong \C$ and $(\C^G)_G \cong \C$. 
See  \cite[Proposition 4.19]{DGNO2}.

\subsection{The crossed product fusion category}\label{cro}

Let $\C$ be a fusion category, and $G$ a finite group acting on
$\C$. Then the {\em crossed product category} $\C \rtimes G$ is defined as
follows \cite{T}. 

For a pair of Abelian categories $\mathcal{A}_1$,  $\mathcal{A}_2$,
let $\mathcal{A}_1 \bt \mathcal{A}_2$ denote their Deligne's
tensor product \cite{D1}\footnote{For semisimple categories $\mathcal{A}_i$,
the Deligne tensor product 
$\mathcal{A}_1 \bt \mathcal{A}_2$ is just the category whose
simple objects are $X_1\otimes X_2$, where $X_i\in \mathcal{A}_i$ are
simple.}. We set $\C \rtimes G = \C \bt \Vec_G$ as an
abelian category, and define a tensor product by
\begin{equation}
(X \bt g) \ot (Y \bt h): = (X \ot g(Y)) \bt gh,\qquad X,Y \in
\C,\quad g,h\in G.
\end{equation}
Then $\M=\C$ is naturally a module category over $\C\rtimes G$
and we have the following result.
\begin{proposition} (see  \cite[Proposition 3.2]{Nk2})
\label{equiv}
$\C^G \cong (\C\rtimes G)_\M^{*op}$. 
\end{proposition}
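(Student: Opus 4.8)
The plan is to construct the equivalence object-by-object, using the standard description of module endofunctors of the regular module category and then reading off the extra data coming from the $\Vec_G$-factor as an equivariant structure. First I would make the module category structure explicit: the action of $\C\rtimes G$ on $\M=\C$ is given by $(X\bt g)\cdot M = X\ot g(M)$, where associativity is checked using the natural isomorphisms $\gamma_{g,h}\colon g(h(M))\to gh(M)$. A $(\C\rtimes G)$-module endofunctor is then a functor $F\colon\C\to\C$ equipped with a natural family of isomorphisms $s_{X\bt g,M}\colon F(X\ot g(M))\to X\ot g(F(M))$ satisfying the usual module-functor coherence axioms, and the task is to show that such data is the same as a $G$-equivariant object of $\C$.

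Next I would decompose $s$ into its two natural sources. Restricting to $g=e$ endows $F$ with the structure of a $\C$-module endofunctor of the regular module; by the standard computation this forces $F(M)\cong M\ot Z$, where $Z:=F(\be)$. Restricting instead to $X=\be$ and using $F(M)\cong M\ot Z$, the component $s_{\be\bt g,M}$ becomes a natural isomorphism $g(M)\ot Z\to g(M)\ot g(Z)$; by naturality and $\C$-linearity it is induced by a single isomorphism whose inverse I denote $u_g\colon g(Z)\to Z$. The heart of the argument is to verify that the associativity coherence of $s$ with respect to the tensor product $(X\bt g)\ot(Y\bt h)=(X\ot g(Y))\bt gh$ --- in which $\gamma_{g,h}$ appears --- translates precisely into the cocycle condition $u_{gh}\circ\gamma_{g,h}=u_g\circ g(u_h)$. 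Thus the pair $(Z,\{u_g\})$ is exactly a $G$-equivariant object, i.e. an object of $\C^G$, and conversely every equivariant object yields such a module endofunctor.

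Finally I would check that this assignment is an equivalence of tensor categories. On morphisms, a module natural transformation $F\to F'$ is determined by its value $Z\to Z'$ at $\be$, and module-naturality on the $g$-components says exactly that this morphism intertwines the $u_g$, i.e. is a morphism in $\C^G$; this gives full faithfulness, while essential surjectivity is the converse construction above. For the monoidal structure I would compute the composite, $(F'\circ F)(\be)=F'(Z)\cong Z\ot Z'$, and match the induced equivariant structure with the tensor product of $\C^G$; since the statement uses the \emph{opposite} composition (the superscript $op$), the factors come out in the order $Z\ot Z'$, as required. The main obstacle is the coherence bookkeeping in the middle step: carefully tracking how the module associativity constraint, the action isomorphisms $\gamma_{g,h}$, and the right-tensoring identification $F(M)\cong M\ot Z$ interact, so as to recover the equivariance axiom exactly and to confirm monoidality; a useful consistency check is the dimension count $\FPdim(\C^G)=|G|\,\FPdim(\C)=\FPdim((\C\rtimes G)_\M^{*op})$.
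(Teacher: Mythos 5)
Your proposal is correct: identifying $(\C\rtimes G)$-module endofunctors of the regular module $\M=\C$ with right tensoring $M\mapsto M\ot Z$, extracting the isomorphisms $u_g\colon g(Z)\to Z$ from the $\be\bt g$ components, recovering the equivariance condition $u_{gh}\circ\gamma_{g,h}=u_g\circ g(u_h)$ from the module-functor coherence, and matching the opposite composition with the tensor product of $\C^G$ is exactly the standard argument. The paper itself gives no proof of this proposition---it is quoted from \cite[Proposition 3.2]{Nk2}---and your argument is essentially the proof given in that reference, so there is nothing to contrast.
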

In other words, the crossed product category $\C\rtimes G$ is
dual to the equivariantization $\C^G$ with respect to the $\C^G$-module
category $\C$. 

\subsection{The M\"uger centralizer} (\cite{M4})

Let $\C$ be a braided fusion category, and $\D\subset \C$ a full
subcategory. The {\em M\"uger centralizer} 
$\D'$ of $\D$ in $\C$ 
is the category of all objects $Y\subset \C$ such that for
any $X\subset \D$ the squared braiding on $X\otimes Y$ is
the identity. 
The {\em M\"uger center} of $\C$
is the M\"uger centralizer $\C'$ of the entire category $\C$. 
The category $\C$ is {\em non-degenerate} 
(in the sense of M\"uger) if $\C'=\Vec$. If $\C$ is a non-degenerate  
braided fusion category 
then one has $\FPdim(\D)\FPdim(\D')=\FPdim(\C)$
(see \cite[Theorem 3.2]{M4} and \cite[Theorem 3.14]{DGNO2}).
If $\C$ is non-degenerate then $\D\subset \C$ is called {\em Lagrangian} if
$\D=\D'$.          

\begin{remark} (see \cite{B, M3})
\label{modularization}
Let $\C$  be a braided fusion category and let $\E \subset \C'$ be a Tannakian
subcategory. Then the de-equivariantization 
of $\C$ by $\E$
is a braided fusion category. It is non-degenerate if and only if $\E=\C'$. 
\end{remark}
 
Note that if $\C$ is a weakly integral non-degenerate category, 
then by a result of \cite{ENO}, it is pseudounitary, which
implies that it is canonically a modular category. This fact will
be used throughout the paper. 

\subsection{M\"uger's theorem (\cite[Theorem 4.2]{M4},  \cite[Theorem 3.13]{DGNO2})}

\begin{theorem}\label{MuTh} 
Let $\C$ be a braided category, and $\D$ a non-degenerate
subcategory in $\C$. Then $\C$ is naturally equivalent, as a
braided category, to $\D\boxtimes \D'$. 
\end{theorem}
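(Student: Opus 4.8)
The plan is to prove that $\C$ decomposes as a braided tensor product $\D \boxtimes \D'$ by constructing the equivalence directly from the tensor product functor. First I would exhibit the candidate functor $F: \D \boxtimes \D' \to \C$ defined on objects by $X \boxtimes Y \mapsto X \otimes Y$ for $X \in \D$ and $Y \in \D'$, extended to morphisms in the obvious way. The key structural input is the definition of $\D'$: for $X \in \D$ and $Y \in \D'$ the squared braiding $c_{Y,X} \circ c_{X,Y}$ is the identity, which is precisely what makes $F$ into a monoidal functor. Concretely, to equip $F$ with a tensor structure I would need an isomorphism $F((X_1 \boxtimes Y_1) \otimes (X_2 \boxtimes Y_2)) \cong F(X_1 \boxtimes Y_1) \otimes F(X_2 \boxtimes Y_2)$, i.e. $X_1 \otimes X_2 \otimes Y_1 \otimes Y_2 \cong X_1 \otimes Y_1 \otimes X_2 \otimes Y_2$, and the natural candidate is $\id_{X_1} \otimes c_{X_2, Y_1} \otimes \id_{Y_2}$. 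The centralizing condition guarantees this assembles coherently into a braided monoidal structure.

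The second step is to verify that $F$ is a braided functor: the braiding on $\D \boxtimes \D'$ is the tensor product of the braidings on the two factors, and one must check that $F$ intertwines this with the braiding on $\C$. This amounts to a hexagon-type diagram chase in which the only nontrivial crossings between a $\D$-object and a $\D'$-object get untangled precisely because their squared braiding is trivial; all the coherence reduces to naturality of $c$ together with the defining relation of the M\"uger centralizer.

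The third and main step is to show that $F$ is an equivalence of categories. Since $\D$ is non-degenerate, I can use the dimension formula quoted in the excerpt, $\FPdim(\D)\FPdim(\D') = \FPdim(\C)$ (valid when $\D$ is non-degenerate, without assuming $\C$ itself is non-degenerate — here one works inside the non-degenerate subcategory, or invokes the version of the formula that holds for a non-degenerate subcategory of an arbitrary braided category). Because $\FPdim(\D \boxtimes \D') = \FPdim(\D)\FPdim(\D') = \FPdim(\C)$, it suffices to prove that $F$ is fully faithful, as a fully faithful tensor functor between fusion categories of equal Frobenius-Perron dimension is automatically an equivalence. To establish full faithfulness I would show that $F$ sends distinct simple objects of $\D \boxtimes \D'$ to non-isomorphic simple objects of $\C$, equivalently that $X \otimes Y$ is simple for $X \in \D$, $Y \in \D'$ simple, and that these account for all simples; the natural tool is to compute $\dim \Hom_\C(X_1 \otimes Y_1, X_2 \otimes Y_2)$ and reduce it via adjunction to $\dim \Hom_\D(X_1, X_2) \cdot \dim \Hom_{\D'}(Y_1, Y_2)$.

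The hardest part will be this last orthogonality/simplicity computation, because it requires genuinely using the non-degeneracy of $\D$ rather than just the centralizing property. The point is that $\dim \Hom_\C(X \otimes Y, X' \otimes Y')$ can in principle receive contributions from objects that lie in $\D \cap \D'$, and one must rule these out: non-degeneracy of $\D$ forces $\D \cap \D' = \D'_{\text{(centralizer of $\D$ inside $\D$)}} = \Vec$, so the two factors intersect trivially and the Hom-spaces factor as desired. I would isolate this intersection-triviality as a preliminary lemma, deduce it from $\D' \cap \D = \D'$-within-$\D$ being the M\"uger center of $\D$ (which is $\Vec$ by hypothesis), and then feed it into the Hom computation to conclude that $F$ is fully faithful, completing the proof.
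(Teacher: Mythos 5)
First, a point of comparison: the paper contains no proof of this statement at all. It is quoted, as M\"uger's theorem, from \cite[Theorem 4.2]{M4} and \cite[Theorem 3.13]{DGNO2}, so your proposal can only be measured against the proofs in those references. Your outline follows the \cite{DGNO2} route and its main steps are sound: the tensor-product functor $F:\D\boxtimes\D'\to\C$, $X\boxtimes Y\mapsto X\otimes Y$, with tensor structure given by the braiding (note that this makes $F$ monoidal in \emph{any} braided category, with no centralizing hypothesis; the triviality of the squared braiding is what makes $F$ \emph{braided}, a point your write-up slightly conflates); full faithfulness from the duality computation $\Hom_\C(X_1\otimes Y_1,X_2\otimes Y_2)\cong\Hom_\C(X_2^*\otimes X_1,Y_2\otimes Y_1^*)$, where only simple constituents lying in $\D\cap\D'$ can contribute, and $\D\cap\D'$ is precisely the M\"uger center of $\D$, hence $\Vec$ by non-degeneracy; and then essential surjectivity by a Frobenius--Perron dimension count, since a fully faithful tensor functor between fusion categories of equal dimension is an equivalence.

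The soft spot --- and the place where you have mislocated the difficulty --- is the input $\FPdim(\D)\FPdim(\D')=\FPdim(\C)$. The version quoted in this paper (from \cite[Theorem 3.2]{M4} and \cite[Theorem 3.14]{DGNO2}) assumes the \emph{ambient} category is non-degenerate, which is exactly what is not available here. Of your two proposed fixes, ``working inside the non-degenerate subcategory'' is vacuous: the formula is an assertion about the dimension of $\C$ and cannot be extracted from data internal to $\D$. The second fix is the right one, but it should be stated as a genuine citation: for any fusion subcategory $\A$ of any braided fusion category $\C$ one has $\FPdim(\A)\FPdim(\A')=\FPdim(\C)\cdot\FPdim(\A\cap\C')$, where $\C'$ is the M\"uger center of $\C$; this is proved in \cite{DGNO2} (via the embedding of $\C$ into its non-degenerate Drinfeld center) independently of, and logically prior to, the splitting theorem, so invoking it is not circular. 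Applied to $\A=\D$ it gives what you need, since $\D\cap\C'\subseteq\D\cap\D'=\Vec$. But be aware that essentially all the depth of M\"uger's theorem is hidden in that formula; the orthogonality/simplicity computation you single out as the hardest step is elementary once $\D\cap\D'=\Vec$ is observed. By contrast, M\"uger's own proof in \cite{M4} (in the spherical setting) needs no such input: he proves essential surjectivity directly from the invertibility of the $S$-matrix of $\D$ and obtains the dimension formula as a corollary rather than a hypothesis.
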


\subsection{Slightly degenerate categories}

\begin{definition} A braided fusion 
category $\C$ is called {\em slightly degenerate} if its M\"uger
center $\C'$ is equivalent, as a symmetric category, 
to the category SuperVec of super
vector spaces. 
\end{definition}

\begin{proposition}\label{almnon}
(i) {\em (cf. \cite[Lemma 5.4]{M1})} Let $\C$ be a braided fusion category such that
its M\"uger center $\C'$ contains ${\rm SuperVec}$
(for example, a slightly degenerate category).
Let $\chi$ be the invertible object generating 
${\rm SuperVec} \subseteq \C'$, 
and let $Y$ be any simple object of $\C$. Then 
$\chi\otimes Y$ is not isomorphic to $Y$. 

(ii) Let $\C$ be slightly degenerate and pointed. 
Then $\C={\rm SuperVec}\boxtimes \C_0$, where 
$\C_0$ is a non-degenerate pointed category.  
\end{proposition}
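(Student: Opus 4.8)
The plan is to prove Proposition \ref{almnon} in two parts, treating (i) first since it is used in (ii).

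\emph{Part (i).} The claim is that tensoring with the order-two invertible object $\chi$ generating ${\rm SuperVec}\subseteq\C'$ is fixed-point-free on isomorphism classes of simple objects. Suppose for contradiction that $\chi\otimes Y\cong Y$ for some simple $Y$. First I would compare Frobenius-Perron dimensions: since $\chi$ is invertible, $\FPdim(\chi)=1$, which is consistent with such an isomorphism, so dimension alone gives no contradiction and the argument must use the braiding. The key point is that $\chi$ lies in the M\"uger center, so the squared braiding of $\chi$ with any object is trivial; moreover, because $\chi$ generates a copy of ${\rm SuperVec}$ rather than of $\Rep(\mathbb{Z}/2)$, the braiding $c_{\chi,\chi}$ equals $-\id$ (this is exactly what distinguishes SuperVec from the Tannakian $\Rep(\mathbb{Z}/2)$). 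I would exploit this by considering the action of $\chi$ on $Y$ together with the braiding isomorphism $c_{\chi,Y}$. An isomorphism $u:\chi\otimes Y\to Y$ would let me build an endomorphism of $\chi\otimes\chi\otimes Y\cong Y$ by applying $u$ twice and comparing with the double braiding; the sign $-1$ coming from $c_{\chi,\chi}=-\id$ forces a contradiction (this is the standard argument that in a category containing SuperVec centrally the fermion cannot fix any simple object, analogous to the fact that a spin-$1/2$ object has no invariant vectors). So the main content of (i) is extracting the sign from the SuperVec structure and propagating it through the braiding.

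\emph{Part (ii).} Now assume $\C$ is slightly degenerate and pointed, so $\C=\Vec_{A,\omega}$ for a finite abelian group $A$ with a quadratic (abelian) cohomological datum encoding the braiding, and its M\"uger center $\C'={\rm SuperVec}$, generated by the fermion $\chi$ of order two. The goal is to split off ${\rm SuperVec}$ as a tensor factor. The natural strategy is to realize $\C$ as the category of $A$-graded vector spaces with a nondegenerate-up-to-fermion quadratic form: the braiding on a pointed braided category is governed by a quadratic form $q:A\to\CC^*$ with associated bilinear form $b$, and the M\"uger center corresponds to the radical of $b$. Slight degeneracy means this radical is exactly $\langle\chi\rangle\cong\mathbb{Z}/2$ with $q(\chi)=-1$. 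I would then find a complementary subgroup $A_0\subseteq A$ with $A=\langle\chi\rangle\times A_0$ on which $b$ restricts nondegenerately, yielding $\C_0=\Vec_{A_0}$ (with the restricted braiding) as a nondegenerate pointed braided category, and $\C\cong{\rm SuperVec}\boxtimes\C_0$ via M\"uger's theorem (Theorem \ref{MuTh}) applied to the nondegenerate subcategory $\C_0$, whose centralizer is then ${\rm SuperVec}=\C'$.

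\emph{Main obstacle.} The hard part will be the splitting in (ii): producing the complementary subgroup $A_0$ requires that the quadratic form $q$ (not merely the bilinear form $b$) actually restricts to a nondegenerate quadratic form on a complement, and a priori the short exact sequence $1\to\langle\chi\rangle\to A\to A/\langle\chi\rangle\to 1$ need not split compatibly with $q$ as a form valued in roots of unity. One must check that a Lagrangian-type or orthogonal decomposition of the quadratic space exists over $\mathbb{Z}/2$-extended data, and in particular that no extra $2$-torsion obstruction prevents choosing $A_0$ with $\chi\notin A_0$ while keeping $b|_{A_0}$ nondegenerate; here part (i), which guarantees $\chi$ is nontrivial and order two acting freely, is what lets me peel it off cleanly. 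Once the complement is found, invoking Theorem \ref{MuTh} to assemble the $\boxtimes$-decomposition is routine.
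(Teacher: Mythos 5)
Your proposal follows the paper's outline superficially, but both parts have genuine gaps, and in part (i) the proposed mechanism cannot be repaired as stated. You claim the contradiction in (i) comes from applying the isomorphism $u:\chi\otimes Y\to Y$ twice on $\chi\otimes\chi\otimes Y\cong Y$ and ``comparing with the double braiding,'' using only the sign $c_{\chi,\chi}=-\id$. No contradiction can come out of naturality, the hexagon axioms, rigidity and simplicity of $Y$ alone, because there are braided (even symmetric) rigid categories in which a transparent fermion \emph{does} fix a simple object: in the symmetric category of finite-dimensional representations of the queer Lie superalgebra $\mathfrak{q}(1)$, the standard $(1|1)$-dimensional module $Y$ is simple (its even endomorphisms are scalars), and its odd automorphism is precisely an even isomorphism $\chi\otimes Y\cong Y$ with the transparent fermion $\chi$. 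In such examples your sign bookkeeping is perfectly consistent: $u$ behaves like an ``odd'' map, so applying it twice produces a compensating $-1$ that absorbs the sign of $c_{\chi,\chi}$. What fails there, and what any correct proof must use, is a trace/dimension input --- such a $Y$ has categorical dimension zero, which is impossible in a fusion category. This is exactly the paper's argument: transparency of $\chi$ gives $u_{\chi\otimes Y}=u_\chi\otimes u_Y$ for the Drinfeld isomorphisms, $u_\chi=-1$ forces $T=-T$ where $T$ is the trace of $u_Y:Y\to Y^{**}$, and the specifically fusion-categorical theorem that $T\neq 0$ for simple objects (from \cite{ENO}) yields the contradiction. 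Your sketch never invokes non-vanishing of any trace or dimension, so the sign has nothing to collide with.

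For (ii) your strategy is the right one and is essentially the paper's (quadratic form $Q$ on the group $A$ of invertibles, split off $\langle\chi\rangle$, apply Theorem \ref{MuTh}), and the compatibility issue you worry about at the end is actually the easy part: since $\chi\in\C'$, the associated bilinear form satisfies $b(\chi,-)\equiv 1$, so \emph{any} group-theoretic complement of $\langle\chi\rangle$ is automatically orthogonal. The crux is the existence of a complement at all, i.e.\ that $\chi$ is not of the form $\xi^{\otimes 2}$ (equivalently, that $\langle\chi\rangle$ is a direct summand of $A$). You correctly flag this as the main obstacle but then do not prove it; your appeal to part (i) is empty here, since in a pointed category no nontrivial invertible object can fix a simple object, so (i) carries no information. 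The obstruction is real and can only be removed using $Q(\chi)=-1$: on $A=\mathbb{Z}/4$ with $Q(x)=i^{x^2}$ the order-two element is transparent and the M\"uger center is exactly $\Rep(\mathbb{Z}/2)$, yet that element \emph{is} a square and no splitting exists --- so a boson can be stuck inside, and only the fermionic sign rules this out. The paper's two-line computation does precisely this: if $\chi=\xi^{\otimes 2}$, then $Q(\xi)^4=Q(\chi)=-1$, hence $Q(\chi\otimes\xi)=Q(\xi)^9=Q(\xi)$ and $b(\chi,\xi)=Q(\chi\otimes\xi)/Q(\chi)Q(\xi)=-1$, contradicting centrality of $\chi$. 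Without this computation (or an equivalent use of $Q(\chi)=-1$), your proof of (ii) is incomplete.
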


\begin{proof} (i) Assume the contrary, i.e., 
$\chi\otimes Y=Y$. Since $\chi$ centralizes $Y$, we have  
from this identity that the trace $T$ of the Drinfeld  
isomorphism $u: Y\to Y^{**}$ is equal to $-T$ 
(as $u|_\chi=-1$).  
This is a contradiction, as $T\ne 0$.

(ii) This statement is proved in \cite[Corollary A.19]{DGNO2}.
We provide a proof here for the reader's convenience. 
Our job is to show that $\chi\ne \xi^{\otimes 2}$ for any $\xi$
(this is the condition for the group of invertible objects of $\C$ to be the direct 
product of the $\Bbb Z/2\Bbb Z$ generated by $\chi$ with another subgroup). 
Assume the contrary, and let $Q$ be the quadratic form defining the braiding. 
Then we have $Q(\xi)^4=Q(\chi)=-1$, $Q(\chi\otimes \xi)=Q(\xi^{\otimes 3})=Q(\xi)^9=Q(\xi)$,
so the squared braiding of $\xi$ and $\chi$ is 
$$
\beta_{\chi,\xi}=Q(\chi\otimes \xi)/Q(\chi)Q(\xi)=-1, 
$$ 
which is a contradiction with the centrality of $\chi$. 
\end{proof}

Note that if $\C$ is a weakly integral 
braided category, then it is pseudounitary, and hence 
is canonically a ribbon category (\cite{ENO}).  By the {\em $S$-matrix}
of a ribbon category $\C$ we understand  a square matrix $S:=\{ s_{X,Y}\}$
whose columns and rows are labeled by simple objects of $\C$ and the entry
$s_{X,Y}$ is equal to the quantum trace of 
$c_{Y,X}c_{X,Y} : X\ot Y \to X\ot Y$,
where $c$ denotes the braiding of $\C$.

\begin{corollary}\label{ones} If $\C$ is a weakly integral slightly degenerate
braided category, then the $S$-matrix of $\C$ is  
$S=I\otimes S'$, where $S'$ is a non-degenerate matrix with
orthogonal rows and columns, and $I$ is the 2 by 2 matrix 
consisting of ones.
\end{corollary}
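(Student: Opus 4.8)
The plan is to exploit the fact that, since $\C$ is slightly degenerate, its Müger center is ${\rm SuperVec}=\{\be,\chi\}$ and $\chi$ is a transparent invertible object. By Proposition~\ref{almnon}(i), $\chi\ot X\not\cong X$ for every simple $X$, so tensoring by $\chi$ is a fixed-point-free involution on the set of simple objects, partitioning them into orbits $\{X,\chi\ot X\}$ of size two. First I would show that $\chi$-translation leaves the $S$-matrix invariant, i.e. $s_{\chi\ot X,\,Y}=s_{X,Y}$ for all simple $X,Y$. This follows because $\chi$ is transparent: the double braiding of $\chi\ot X$ around $Y$ splits, by the hexagon axioms, as the identity on the $\chi$-factor (as $\chi$ centralizes $Y$) tensored with the double braiding of $X$ around $Y$; taking quantum traces and using $\FPdim(\chi)=1$ gives the claim. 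By symmetry of the $S$-matrix the same holds in the second variable.

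Consequently, choosing one representative $X_1,\dots,X_m$ from each $\chi$-orbit and ordering the simple objects as $X_1,\dots,X_m,\chi\ot X_1,\dots,\chi\ot X_m$, the rows (and columns) attached to $X_i$ and to $\chi\ot X_i$ coincide. Setting $S'=\{s_{X_i,X_j}\}_{1\le i,j\le m}$, this exhibits $S$ as the block matrix all four of whose blocks equal $S'$, that is $S=I\ot S'$ with $I$ the $2\times 2$ matrix of ones, as claimed. Note that $S'$ is symmetric because $S$ is.

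It remains to prove that $S'$ is non-degenerate with orthogonal rows and columns. For this I would invoke the orthogonality relation for the $S$-matrix of a premodular category with Müger center $\E$, namely $\sum_{X\in{\rm Irr}(\C)} s_{A,X}\,\overline{s_{C,X}}=\FPdim(\C)\,\sum_{Y\in{\rm Irr}(\E)}\delta_{A,\,C\ot Y}$, which in the modular case ($\E=\Vec$) reduces to the usual unitarity of the normalized $S$-matrix. Here $\E={\rm SuperVec}$, so the right-hand side equals $\FPdim(\C)\,(\delta_{A,C}+\delta_{A,\chi\ot C})$. Taking $A=X_k$ and $C=X_l$ with $k,l$ orbit representatives, the second delta vanishes, while the left-hand sum splits over orbits and, using $s_{\,\cdot\,,\chi\ot X_i}=s_{\,\cdot\,,X_i}$, each orbit contributes twice; one obtains $\sum_i s_{X_k,X_i}\,\overline{s_{X_l,X_i}}=\tfrac12\FPdim(\C)\,\delta_{kl}$. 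Since $\C$ is weakly integral it is pseudounitary with $\FPdim(\C)\ne 0$, so the rows of $S'$ are orthogonal with nonzero norm; hence $S'$ is non-degenerate, and orthogonality of the columns follows from the symmetry of $S'$.

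The main obstacle is justifying the two analytic inputs cleanly: the transparency identity $s_{\chi\ot X,Y}=s_{X,Y}$ (which rests on the hexagon decomposition of the double braiding together with multiplicativity of the quantum trace over tensor products) and, more substantially, the premodular orthogonality relation, whose modular special case is standard but whose extension to the center $\E={\rm SuperVec}$ must be quoted from the structure theory of braided fusion categories (\cite{M4,DGNO2}). Once these are in hand, the block decomposition and the final orthogonality computation are formal.
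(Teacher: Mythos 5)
Your proof is correct and follows essentially the same route as the paper: the paper's one-sentence proof is exactly your first step (transparency of $\chi$ together with Proposition \ref{almnon}(i) pairs up the simple objects and makes the corresponding rows and columns of $S$ coincide), with the orthogonality and non-degeneracy of $S'$ left implicit as standard premodular $S$-matrix theory, which you spell out.

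One caveat worth fixing: the orthogonality relation you quote for a general M\"uger center $\E$ is false as stated. For instance, take $\C=\E=\Rep(S_3)$, so $s_{A,X}=d_Ad_X$; with $A=C$ the two-dimensional simple object, your left-hand side equals $24$ while your right-hand side equals $12$. The correct general form is $\sum_{X}s_{A,X}\overline{s_{C,X}}=\FPdim(\C)\sum_{Y\in{\rm Irr}(\E)}N_{CY}^{A}\,d_Y$, which collapses to your Kronecker-delta form precisely when $\E$ is pointed --- fortunately the only case you use, $\E={\rm SuperVec}$. Moreover, rather than being quotable verbatim from \cite{M4,DGNO2}, this relation is most easily derived from two facts that are standard there: each row $s_{X,-}/d_X$ of the $S$-matrix is a character of the complexified Grothendieck ring, and M\"uger's ``killing'' lemma asserting that $\sum_{X}d_X s_{X,B}$ equals $\FPdim(\C)d_B$ if $B\in\C'$ and $0$ otherwise. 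With the relation stated for pointed $\E$ (or in the corrected general form), your argument is complete and agrees with the paper's intended one.
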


\begin{proof} This is an easy consequence of Proposition \ref{almnon}(i)
since the rows and columns of $S$ corresponding  to $\be$ and $\chi$ coincide.
\end{proof} 

\begin{corollary}\label{odd}
Let $\C$ be a slightly degenerate integral category of dimension $>2$.
Then $\C$ contains an odd-dimensional simple object 
outside of the M\"uger center $\C'$ of $\C$. 
\end{corollary}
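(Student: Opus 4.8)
The plan is to argue by contradiction: suppose every simple object of $\C$ outside $\C'=\{\be,\chi\}$ has even Frobenius--Perron dimension, and derive a contradiction from the hypothesis $\FPdim(\C)>2$. The first move is to dispose of invertible objects. If $\C$ has an invertible simple object other than $\be$ and $\chi$, it has dimension $1$ (odd) and does not lie in $\C'$, and we are done; so I may assume the only invertible objects are $\be$ and $\chi$, i.e. $\C_{\mathrm{pt}}=\C'\cong\mathrm{SuperVec}$. In particular the pointed case is settled outright by Proposition~\ref{almnon}(ii): writing $\C=\mathrm{SuperVec}\bt\C_0$ with $\C_0$ a non-degenerate pointed category of dimension $\FPdim(\C)/2>1$, any nontrivial invertible object of $\C_0$ is an odd-dimensional simple object lying outside $\C'$.

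For the remaining case I would exploit the $S$-matrix. By Proposition~\ref{almnon}(i) the functor $X\mapsto \chi\ot X$ is a fixed-point-free involution on the set of simple objects that preserves dimension, so the simples fall into $\chi$-orbits of size two, each orbit $O$ carrying a well-defined dimension $d_O$, with $d_{\be}=1$ and (by the contradiction hypothesis) every other $d_O$ even. Corollary~\ref{ones} then gives $S=I\ot S'$, where $S'$ is a non-degenerate symmetric matrix indexed by the $\chi$-orbits, with orthogonal rows and columns and first row $(d_O)_O$. Setting $c:=\FPdim(\C)/2=\sum_O d_O^2$, a computation of $S'^2$ should give the orthogonality relations $\sum_O d_O\,s'_{O,O'}=c\,\delta_{O',\be}$. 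The idea is to reduce these identities modulo a prime ideal $\mathfrak{p}$ lying over $2$ in the ring of cyclotomic integers generated by the entries $s'_{O,O'}$: under the hypothesis the first row of $S'$ reduces to $(1,0,\dots,0)$, and I want to show that this rigidity, together with the non-degeneracy of $S'$, is incompatible with the absence of any further odd $d_O$.

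The step I expect to be the main obstacle is precisely closing this arithmetic argument. Naive dimension counting is not enough: the hypothesis forces only $\FPdim(\C)\equiv 2\pmod 8$ and $c$ odd, both of which are perfectly consistent, so the contradiction cannot come from congruences on $\FPdim(\C)$ alone. One must genuinely use that $S'$ is non-degenerate and that its reduction $\bar{S'}$ is a symmetric matrix over a field of characteristic $2$ whose first row and column equal $(1,0,\dots,0)$ while $\bar{S'}{}^2$ is the charge-conjugation permutation matrix; the task is to extract from this structure a second orbit with odd $d_O$. An alternative route, which may sidestep the parity bookkeeping, is to pass from the slightly degenerate $\C$ to a genuinely non-degenerate category via a de-equivariantization as in Remark~\ref{modularization} and the splitting of Theorem~\ref{MuTh}, prove the analogous statement there where the full orthogonality of the modular $S$-matrix is available, and then transport the conclusion back; I would develop both approaches and retain whichever yields the cleanest final parity contradiction.
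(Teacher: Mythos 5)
Your proposal is not a complete proof: the main case --- where the only invertible objects are $\be$ and $\chi$ and, for contradiction, every other simple object is assumed even-dimensional --- is never closed. You set up the folded matrix $S'$ and propose reducing modulo a prime above $2$, but you explicitly leave the decisive step (extracting ``a second orbit with odd $d_O$'') as an open obstacle, and then offer a second, equally unfinished route via de-equivariantization, planning to ``retain whichever works.'' Neither sketch identifies an actual source of contradiction, and the structure you describe is not self-contradictory: a symmetric invertible matrix over the residue field with first row and column $(1,0,\dots,0)$ whose square is a permutation matrix certainly exists (the identity matrix is one), so no contradiction can follow from that shape alone without substantially more arithmetic input, which you do not supply.

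The idea you are missing is that Proposition \ref{almnon}(i) should be applied not to organize all simple objects into $\chi$-orbits, but locally to a single object. Take any simple $X$ outside $\C'$ (it exists since $\FPdim(\C)>2$ while $\FPdim(\C')=2$). Since $\chi\ot X\ne X$, adjunction gives $\Hom(\chi,X\ot X^*)=\Hom(\chi\ot X,X)=0$, i.e.\ $\chi$ is not a constituent of $X\ot X^*$. If $\FPdim(X)$ is odd you are done; otherwise $X\ot X^*=\be\oplus Y$ where $\FPdim(Y)=\FPdim(X)^2-1$ is odd and every simple constituent of $Y$ lies outside $\C'=\{\be,\chi\}$ (the unit occurs with multiplicity one and $\chi$ not at all), so some constituent of $Y$ is odd-dimensional and outside $\C'$. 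This is the paper's entire proof: no contradiction, no $S$-matrix, no reduction modulo $2$. In particular, your assertion that ``naive dimension counting is not enough'' is mistaken; it is based on considering only global congruences on $\FPdim(\C)$, whereas counting dimensions inside the single object $X\ot X^*$ --- made possible by the exclusion of $\chi$ from it --- is exactly enough.
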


\begin{proof}
Let $\chi$ be the invertible object generating $\C'$. 
Let $X$ be any simple object outside of $\C'$. By Proposition
\ref{almnon}(i), $\chi\otimes X\ne X$, which implies that 
$X\otimes X^*$ does not contain $\chi$. Thus, either $X$ itself 
is odd-dimensional, or $X\otimes X^*/\bold 1$ is odd-dimensional, 
and is a direct sum of simple objects not contained in $\C'$.
In this case one of the summands has to be odd-dimensional.
\end{proof}

\subsection{Interpretation of extensions and equivariantizations in terms of  
the  center (see \cite{B, M1, M3, DGNO2, GNN})}

\begin{proposition}
\label{eqext1}
Let $\C$ be a fusion category. 
\begin{enumerate}
\item[(i)]  If $\Z(\C)$ contains a  Tannakian subcategory $\Rep(G)$ which maps to $\Vec$
under the forgetful functor $\Z(\C)\to \C$ then $\C$ is a $G$-extension of some  
fusion category $\D$.
\item[(ii)]  Let $\C$ be a $G$-extension of $\D$. Then $\Z(\C)$ contains
a Tannakian subcategory $\E=\Rep(G)$ mapping to $\Vec$ in $\C$, 
such that the de-equivariantization 
of $\E'$ by $\E$ is equivalent to $\Z(\D)$ as a braided tensor category.
\end{enumerate}
\end{proposition}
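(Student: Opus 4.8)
The plan is to prove both parts by relating the center of $\C$ to gradings and to the center of the trivial component $\D$. Part (i) and part (ii) are essentially converse statements, so I would organize the proof around the single bijective correspondence between Tannakian subcategories $\Rep(G) \subset \Z(\C)$ that map to $\Vec$ under the forgetful functor $\Z(\C) \to \C$ and faithful $G$-gradings of $\C$.

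\textbf{Part (i).} Suppose $\E = \Rep(G) \subset \Z(\C)$ maps to $\Vec$ under the forgetful functor. The central idea is that a central embedding $\Rep(G) \hookrightarrow \Z(\C)$ landing in $\Vec$ after forgetting is the same data as a central structure trivializing the objects, which forces a $G$-action on the dual side. Concretely, I would pass to the algebra $A = \Fun(G)$, which is a commutative algebra in $\Z(\C)$; since $\E$ maps to $\Vec$, the image of $A$ in $\C$ is the regular algebra $\Fun(G) \in \Vec \subset \C$, a direct sum of $|G|$ copies of $\be$ indexed by $G$. Decomposing $\C$ according to this $G$-action, i.e.\ using the $G$-grading induced by the central idempotents of $A$, exhibits $\C = \bigoplus_{g \in G} \C_g$ as a $G$-graded category. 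The fact that the subcategory is Tannakian (rather than merely symmetric) and that it maps to $\Vec$ is precisely what guarantees the grading is by the group $G$ itself with trivial component $\D = \C_e$. This construction is worked out in the references \cite{DGNO2, GNN}, so I would cite the relevant statement rather than reprove the equivalence of categories in detail.

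\textbf{Part (ii).} Now assume $\C$ is a $G$-extension of $\D$. Running the correspondence from part (i) in reverse produces the Tannakian subcategory $\E = \Rep(G) \subset \Z(\C)$ mapping to $\Vec$ in $\C$. The substantive new content is identifying the de-equivariantization of the centralizer $\E'$ by $\E$ with $\Z(\D)$ as a braided category. I would analyze $\E'$, the M\"uger centralizer of $\E$ in $\Z(\C)$: this consists of those central objects whose squared braiding with every object of $\Rep(G)$ is trivial. Using the general theory relating the center of a graded category to the center of its trivial component (the key input being results of the type in \cite{GNN, DGNO2} computing $\Z(\C)$ for a $G$-extension), I expect $\E'$ to be a $G$-crossed or $G$-equivariant structure over $\Z(\D)$, so that de-equivariantizing by $\E = \Rep(G)$ recovers $\Z(\D)$ itself. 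Concretely, the composite of the forgetful and induction functors, together with Remark~\ref{modularization} (which tells us how de-equivariantization by a Tannakian subcategory of the centralizer yields a braided category), should produce a braided tensor equivalence $(\E')_G \cong \Z(\D)$. I would verify this equivalence is braided by checking it on Frobenius-Perron dimensions first, $\FPdim((\E')_G) = \FPdim(\E')/|G| = \FPdim(\Z(\D))$, and then upgrade a tensor functor to an equivalence.

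\textbf{Main obstacle.} The hard part will be part (ii), specifically pinning down the braided structure on the de-equivariantization and proving the equivalence with $\Z(\D)$ is braided, not merely an abstract fusion-category equivalence of the correct dimension. The computation of $\Z(\C)$ for a $G$-extension is delicate because it involves $G$-crossed braided structures and the associated obstruction data; identifying the centralizer $\E'$ precisely and showing the de-equivariantization functor is compatible with braidings requires careful bookkeeping of the $G$-action and the braiding isomorphisms. I would lean heavily on \cite[Section 4]{DGNO2} and \cite{GNN} for the structural statements about centers of extensions and equivariantizations, reducing the proof to an application of those results together with the canonical inverse equivalences $(\C_G)^G \cong \C$ and $(\C^G)_G \cong \C$ recalled in Section~\ref{Sect 2.6}.
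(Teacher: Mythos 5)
Your overall architecture is the same as the paper's --- Tannakian reconstruction for (i), identification of $\E'$ and de-equivariantization for (ii), leaning on \cite{GNN} (where, as the paper itself notes, (ii) is proved) --- but two of the steps you sketch would fail if written out. In part (i), the grading is \emph{not} induced by ``the central idempotents of $A$'': the image of $A=\Fun(G)$ in $\C$ lies in $\Vec\subset\C$, so its idempotents are just idempotents of the commutative algebra $\Fun(G)$; by themselves they decompose $A$, not $\C$. What actually produces the grading is the half-braiding: for simple $X\in\C$ and $Y\in\E$, the central structure $\eta_{X,Y}:X\ot F(Y)\to F(Y)\ot X$, composed with the trivial swap coming from $F(Y)\in\Vec$, lies in $\End_\C(F(Y)\ot X)=\End_{\CC}(F(Y))$, and $Y\mapsto$ this automorphism is a tensor automorphism of $F|_\E$, hence by Tannakian duality an element $i_X\in G$; then $\C_g$ is spanned by the simples with $i_X=g$. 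Moreover, a $G$-extension requires the grading to be \emph{faithful}, which your sketch never addresses; the paper gets this from non-degeneracy of $\Z(\C)$, which gives $\FPdim(\E')=\FPdim(\C)^2/|G|$, together with surjectivity of the restricted forgetful functor $\E'\to\C_e$, yielding $\FPdim(\C_e)=\FPdim(\C)/|G|$.

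In part (ii), ``running the correspondence from part (i) in reverse'' is not a construction: (i) goes from a central Tannakian subcategory to a grading, and producing $\E$ from a given grading is exactly the new content. The paper writes it down explicitly: $Y_\pi=V\ot\be$ for a representation $\pi:G\to GL(V)$, with half-braiding $\pi(g)\ot\id_X$ on $X\in\C_g$. More seriously, your plan to ``verify this equivalence is braided by checking it on Frobenius--Perron dimensions first, and then upgrade a tensor functor to an equivalence'' has the logic backwards: a dimension count can upgrade a surjective tensor functor to an equivalence, but no dimension count can certify braidedness --- you must have a braided functor in hand before you count. In the paper that functor is concrete and braided by construction: every object of $\E'$ is supported on $\C_e$, and restricting its half-braiding from $\C$ to $\C_e$ defines a surjective braided functor $H:\E'\to\Z(\C_e)=\Z(\D)$; one then checks that $H$ factors through the de-equivariantization $\E'\to\E'_G$ (using \cite{B}), and only at that point does $\FPdim(\E'_G)=\FPdim(\E')/|G|=\FPdim(\Z(\D))$ promote the induced braided functor to a braided equivalence. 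Unless you replace this whole step by a verbatim citation of the corresponding theorem of \cite{GNN}, your outline does not close.
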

\begin{proof}
(i)  Suppose there is a Tannakian subcategory $\E=\Rep(G) \subset \Z(\C)$
such that  the restriction of the forgetful functor $F: \Z(\C)\to \C$ maps
$\E$ to $\Vec$.
Then every simple object $X$ of $\C$ determines a tensor 
automorphism of $F|_\E$ as follows.
Given  an object $Y$ in $\E$, the permutation isomorphism 
$\eta_{X,Y}: X\otimes F(Y)\xrightarrow{\sim} F(Y)\otimes X$ defining the
central structure of $Y$ yields an automorphism 
$\eta_{X,Y}\circ \delta$ of $F(Y) \ot X$, where $\delta:
F(Y)\otimes X\to X\otimes F(Y)$ is the ``trivial'' isomorphism, coming
from the fact that $F(Y)\in \Vec$. Since $\End_\C(F(Y)\otimes X)=
\End_{\Bbb C}F(Y)$, we obtain a linear automorphism 
$i_X : F(Y)\to F(Y)$.  Clearly, $i_X$ gives rise
to a tensor automorphism of $F|_\E$. 
Since the group of tensor automorphisms of $F|_\E$ is 
isomorphic to $G$, we have a canonical assignment  $X \mapsto i_X \in G$.
It is multiplicative in $X$ (in the sense that $i_Z=i_Xi_Y$ for
any simple $Z\subset X\otimes Y$), and thus 
defines a grading $\C =\bigoplus_{g\in G}\, \C_g$. 

Now note that every simple object of the center $\Z(\C)$ of a graded
category $\C$ is either supported on its trivial component
or is disjoint from it.  By construction, $\E'$ coincides
with the category $\Z(\C)_e$ of objects of $\Z(\C)$ supported on $\C_e$
(indeed, $X$ is in $\E'$ iff $i_X$ is identity).
Therefore, $F$ restricts to a surjective functor  $\E'\to \C_e$.  
Using the identity in \cite[proof of Corollary 8.11]{ENO} we obtain
\[
\FPdim(\C_e) =\frac{\FPdim(\E')}{\FPdim(\C)} = \frac{\FPdim(\C)}{|G|},
\]
which means that the above grading of $\C$ is faithful.

(ii)  This statement is proved in \cite{GNN},
we include its proof for the reader's convenience.
Suppose $\C =\bigoplus_{g\in G}\, \C_g$ with $\C_e=\D$.
We construct a subcategory $\E \subset \Z(\C)$ as follows.
For any representation $\pi : G \to GL(V)$ of $G$ consider 
an object $Y_\pi$ in $\Z(\C)$
where $Y_\pi = V \ot \be$ as an object of $\C$
with the permutation isomorphism 
\begin{equation}
c_{Y_\pi, X}:= \pi(g) \ot \id_X:
 Y_\pi \ot X \cong X \ot Y_\pi,\qquad \mbox{ when }  
X\in \C_g,
\end{equation}
where we identified $Y_\pi \ot X$ and $X \ot Y_\pi$ with $V \ot X$.
Let $\E$ be the fusion subcategory of $\Z(\C)$ consisting of  objects $Y_\pi$, 
where $\pi$ runs through all finite-dimensional representations of $G$. 
Clearly, $\E$ is equivalent to  $\Rep(G)$ with its standard
braiding. 

By construction, the forgetful functor maps $\E$ to $\Vec$ and  $\E'$ 
consists of all objects in $\Z(\C)$ whose forgetful image is in $\C_e$.   
Consider the surjective braided functor $H: \E' \to \Z(\C_e)$ obtained by 
restricting the braiding of $X\in \E'$ from $\C$ to $\C_e$. One can check 
that $H$ can be factored through the de-equivariantization functor $\E' \to \E'_G$
(see \cite[Theorem 3.1]{B}). This yields a braided equivalence between $\E'_G$ 
and $\Z(\C_e)$, since the two categories have equal Frobenius-Perron dimension.
\end{proof}

The following Proposition \ref{eqext2} can be derived from \cite{B, M1, M2}.
Again, we include the proof for the reader's convenience.

\begin{proposition}
\label{eqext2}
\begin{enumerate}
\item[(i)]
If $\Z(\C)$ contains a  Tannakian subcategory $\Rep(G)$ which embeds  to $\C$
under the forgetful functor $\Z(\C)\to \C$ then $\C$ is a $G$-equivarianti\-zation of some  
fusion category $\D$.
\item[(ii)]
Let $\C$ be a $G$-equivariantization of $\D$. Then $\Z(\C)$ contains
a Tannakian subcategory $\E=\Rep(G)$  such that the de-equivariantization 
of $\E'$ by $\E$ is equivalent to $\Z(\D)$ as a braided tensor category.
\end{enumerate}
\end{proposition}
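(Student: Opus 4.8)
The plan is to deduce both parts from the extension picture of Proposition~\ref{eqext1}, exploiting that de-equivariantization inverts equivariantization and that an equivariantization is Morita-dual to a crossed product.

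For part (i), I note that the hypothesis --- a Tannakian subcategory $\E=\Rep(G)\subset\Z(\C)$ that embeds into $\C$ under the forgetful functor $\Z(\C)\to\C$ --- is exactly the data needed to perform the de-equivariantization of Section~\ref{Sect 2.6}. So I would set $\D:=\C_G$, the de-equivariantization of $\C$ by $\E$; this is a fusion category equipped with a canonical $G$-action and $\FPdim(\D)=\FPdim(\C)/|G|$. The canonical equivalence $(\C_G)^G\cong\C$ recorded at the end of Section~\ref{Sect 2.6} (see \cite[Proposition 4.19]{DGNO2}) then identifies $\C\cong\D^G$, so that $\C$ is a $G$-equivariantization of $\D$, as required.

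For part (ii), write $\C=\D^G$ and pass to the crossed product $\B:=\D\rtimes G$. By Proposition~\ref{equiv}, $\C\cong\B_\M^{*op}$ with $\M=\D$, so $\C$ and $\B$ are Morita equivalent. On the other hand $\B=\D\bt\Vec_G$ is faithfully graded by $G$ with trivial component $\D\bt\Vec_e=\D$, i.e.\ $\B$ is a $G$-extension of $\D$; hence Proposition~\ref{eqext1}(ii) yields a Tannakian subcategory $\E=\Rep(G)\subset\Z(\B)$ whose M\"uger centralizer $\E'$ de-equivariantizes by $\E$ to $\Z(\D)$. Since Morita equivalent fusion categories have braided equivalent centers (M\"uger, \cite{M2}), there is a braided equivalence $\Z(\B)\xrightarrow{\sim}\Z(\C)$; transporting $\E$ along it produces the Tannakian subcategory of $\Z(\C)$ demanded by the statement.

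The one step requiring care, which I view as the main (if essentially formal) obstacle, is checking that the conclusion survives transport along $\Z(\B)\xrightarrow{\sim}\Z(\C)$: I must verify that a braided equivalence carries $\E'$ to the centralizer of the image of $\E$ and intertwines the two de-equivariantizations, so that $\Z(\D)$ is again obtained. Both facts hold because the M\"uger centralizer depends only on the braiding, and the de-equivariantization of a braided category by a Tannakian subcategory --- the category of modules over the associated function algebra --- is constructed purely from the braided structure. This exhibits Proposition~\ref{eqext2} as the precise Morita-dual counterpart of Proposition~\ref{eqext1}.
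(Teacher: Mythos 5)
Your proposal is correct and follows essentially the same route as the paper: part (i) is the de-equivariantization $\D:=\C_G$ (modules over $\Fun(G)$ in $\C$) together with the canonical equivalence $(\C_G)^G\cong\C$, and part (ii) is Proposition~\ref{equiv} giving Morita equivalence of $\D^G$ with the $G$-extension $\D\rtimes G$, followed by the braided equivalence of centers and Proposition~\ref{eqext1}(ii). Your extra check that the M\"uger centralizer and the de-equivariantization are transported correctly along the braided equivalence $\Z(\D\rtimes G)\cong\Z(\C)$ is exactly what the paper leaves implicit in ``follows immediately,'' and your justification of it is sound.
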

\begin{proof}
(i) Let $A=\mbox{Fun}(G)$ be the algebra of functions on $G$. It is a commutative
algebra in $\Z(\C)$. Therefore, the category $\D:=\C_G$ of $A$-modules in $\C$ is a 
fusion category. The action of $G$ on $A$ via right translations gives rise
to an action of $G$ on $\D$. It is straightforward to check 
that the corresponding equivariantization of $\D$ is equivalent to $\C$ 
(see Section~\ref{Sect 2.6} and \cite[Section 4.2]{DGNO2}).

(ii) By Proposition~\ref{equiv} 
$\D^G$ is Morita equivalent to a
$G$-graded fusion category $\D\rtimes G$ whose trivial component is $\D$. Hence
$\Z(\C)\cong \Z(\D \rtimes G)$ and
the required statement follows immediately from Proposition~\ref{eqext1}(ii).
\end{proof}

%
%
%
%

\subsection{The divisibility theorems} 

\begin{theorem}\label{divis} (\cite[Lemma 1.2]{EG1} and \cite[Propositions 8.23, 8.24, 8.27]{ENO})
\begin{enumerate}
\item[(i)] Let $\C$ be a weakly integral non-degenerate braided category. 
Then for any simple object $X\in \C$, the ratio
$\FPdim(\C)/\FPdim(X)^2$ is an integer. 
\item[(ii)] Let $\C$ be a weakly integral braided fusion category. 
Then for any simple object $X\in \C$, the ratio
$\FPdim(\C)/\FPdim(X)$ is the square root of an integer. 
\end{enumerate}
\end{theorem}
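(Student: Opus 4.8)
The plan is to prove (i) directly from the modular structure, and then to deduce (ii) from (i) by passing to the Drinfeld center. For (i), I would first invoke the observation recorded above that a weakly integral non-degenerate braided category is pseudounitary and hence canonically modular, so that its categorical dimensions agree with the Frobenius--Perron dimensions. Write $S=\{s_{X,Y}\}$ for the $S$-matrix and $d_X=\FPdim(X)=s_{\be,X}$. The two ingredients I need are: (a) the normalized entries $\lambda_Y(X):=s_{X,Y}/d_Y$ are the simultaneous eigenvalues of the fusion matrices $N_X$, which act by left multiplication in the Grothendieck ring and have nonnegative integer entries, so the $\lambda_Y(X)$ are algebraic integers; and (b) the orthogonality relation $SS^\ast=\FPdim(\C)\,I$, valid for a modular category, whose diagonal reads $\sum_X |s_{X,Y}|^2=\FPdim(\C)$.

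Dividing the identity in (b) by $d_Y^2$ and using $\overline{\lambda_Y(X)}=\lambda_Y(X^*)$ (which follows from $\overline{s_{X,Y}}=s_{X^*,Y}$ and $d_Y\in\mathbb{R}$), I obtain
$$
\frac{\FPdim(\C)}{\FPdim(Y)^2}=\sum_X |\lambda_Y(X)|^2=\sum_X \lambda_Y(X)\,\lambda_Y(X^*).
$$
Each summand is a product of algebraic integers, so the right-hand side is an algebraic integer; on the other hand it is a ratio of the integers $\FPdim(\C)$ and $\FPdim(Y)^2$ (the latter is an integer since in a weakly integral category $\FPdim(Y)$ is the square root of an integer), hence rational. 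A rational algebraic integer is an integer, which proves (i).

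For (ii), let $\C$ be an arbitrary weakly integral braided category. The braiding equips every object with a half-braiding, yielding a tensor functor $\C\to\Z(\C)$ that is a section of the forgetful functor, and which is fully faithful by naturality of the braiding; it therefore realizes $\C$ as a fusion subcategory of $\Z(\C)$ with $\FPdim$ preserved. The center $\Z(\C)$ is non-degenerate, braided, and weakly integral with $\FPdim(\Z(\C))=\FPdim(\C)^2$. Applying (i) to $\Z(\C)$ and to any simple $X\in\C\subset\Z(\C)$ shows that $\FPdim(\C)^2/\FPdim(X)^2$ is an integer, so $\FPdim(\C)/\FPdim(X)$ is the square root of an integer. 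The main obstacle is concentrated in steps (a) and (b) of part (i): one must set up the modular (Verlinde) machinery carefully, establishing that the $S$-matrix simultaneously diagonalizes the fusion rules so the normalized entries are eigenvalues of integer matrices, and that $S$ satisfies the orthogonality relation with exactly the normalization $\FPdim(\C)$, which uses both non-degeneracy and pseudounitarity. Granting this, the remaining arithmetic and the reduction of (ii) to (i) are routine.
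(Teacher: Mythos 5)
Your proposal is correct and follows essentially the same route as the proof the paper relies on through its citations: part (i) is the Verlinde-formula/algebraic-integer argument of \cite[Lemma 1.2]{EG1}, made applicable by the pseudounitarity of weakly integral categories from \cite[Propositions 8.23, 8.24]{ENO}, and part (ii) is the standard reduction to the non-degenerate case via the fully faithful braided embedding $\C\hookrightarrow \Z(\C)$ --- the same device the paper itself uses in the proof of Lemma \ref{key}. There are no gaps.
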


\begin{theorem}\label{divis1} (\cite[Corollary 8.11, Proposition 8.15]{ENO})
The Frobenius-Perron dimension of a full fusion subcategory or 
a component in a quotient category of a fusion category $\C$ divides 
the Frobenius-Perron dimension of $\C$ in the ring of algebraic integers. 
\end{theorem}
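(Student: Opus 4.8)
The plan is to reduce both assertions to the single principle that the Frobenius--Perron dimension of any honest object of a fusion category is an algebraic integer, being the Perron--Frobenius eigenvalue of a nonnegative integer fusion matrix. Thus in each case it suffices to exhibit the ratio $\FPdim(\C)/\FPdim(\D)$ as the $\FPdim$ of a genuine object. The main tool is the regular element $R_\C=\sum_X \FPdim(X)\,X$, the sum over simple objects, which satisfies $Y\otimes R_\C=\FPdim(Y)R_\C$ for all $Y$ and $\FPdim(R_\C)=\FPdim(\C)$; its defining property is that it spans the one--dimensional Perron--Frobenius eigenspace of the Grothendieck ring.

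I would treat the quotient case first, as it is the cleaner of the two. Let $F\colon\C\to\D$ be a dominant (surjective) tensor functor and let $I\colon\D\to\C$ be its right adjoint. Since $F$ is a tensor functor it preserves Frobenius--Perron dimensions, $\FPdim(FX)=\FPdim(X)$. Applying $F$ to the identities $X\otimes R_\C=\FPdim(X)R_\C$ shows that $F(R_\C)$ is a common eigenvector for multiplication by every object in the image of $F$; as $F$ is dominant this image generates $K(\D)$, so $F(R_\C)$ lies in the Perron--Frobenius eigenspace of $\D$ and hence $F(R_\C)=c\,R_\D$ for a scalar $c$. Evaluating $\FPdim$ on both sides gives $c=\FPdim(\C)/\FPdim(\D)$, while reading off the coefficient of $\be$ gives $c=\sum_X \FPdim(X)\dim\Hom_\D(FX,\be)$. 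Now set $A:=I(\be)$. By adjunction $\dim\Hom_\C(X,A)=\dim\Hom_\D(FX,\be)$, so $\FPdim(A)=\sum_X \FPdim(X)\dim\Hom_\C(X,A)=c$. Therefore $\FPdim(\C)/\FPdim(\D)=\FPdim(A)$ is the dimension of a genuine object of $\C$, hence an algebraic integer.

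For the subcategory case $\D\subseteq\C$ I would view $\C$ as a left $\D$--module category and decompose it into indecomposable summands $\C=\bigoplus_k \M_k$. For each $k$ the restriction of $\FPdim_\C$ to the simple objects of $\M_k$ satisfies precisely the defining equations of the module--category Frobenius--Perron dimensions, so it equals $\lambda_k$ times the intrinsically normalized module dimension $\delta_M$ (normalized so that $\sum_{M\in\M_k}\delta_M^2=\FPdim(\D)$), i.e. $\FPdim(M)=\lambda_k\delta_M$. Summing squares over $\M_k$ gives $\sum_{M\in\M_k}\FPdim(M)^2=\lambda_k^2\,\FPdim(\D)$, and summing over $k$ yields $\FPdim(\C)/\FPdim(\D)=\sum_k\lambda_k^2$. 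The task is thus reduced to showing that each relative index $\lambda_k^2$ is an algebraic integer; here one brings in the internal--Hom algebra $A_k=\uHom(M_0,M_0)\in\D$ attached to a chosen simple $M_0\in\M_k$, which coincides with the projection $\iota^R(M_0\otimes M_0^*)$ of $M_0\otimes M_0^*$ to $\D$.

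The hard part is exactly this last integrality of the relative index $\lambda_k^2$: unlike the quotient case it is a priori only a ratio of algebraic integers, and nothing forces it to be integral without further input. I expect to resolve it either by a Morita--duality reduction to the quotient case---a subcategory inclusion $\D\subseteq\C$ is dual to a dominant tensor functor between categories Morita equivalent to $\D$ and $\C$, and $\FPdim$ is a Morita invariant, so the already-proved quotient statement applies---or, more directly, by a Perron--Frobenius argument realizing $\lambda_k^2$ as an eigenvalue of a nonnegative integer matrix built from the $\D$--bimodule structure on $\C$. Verifying the normalization constants in the internal--Hom formalism, and checking that the Morita dual really converts the given inclusion into a dominant functor of the correct dimensions, is where the genuine work lies.
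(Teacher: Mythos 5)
This theorem is not proved in the paper at all --- it is quoted from \cite[Corollary 8.11, Proposition 8.15]{ENO} --- so your attempt can only be measured against the ENO argument it implicitly reconstructs. Your quotient-case computation is indeed that argument: the identity $\FPdim(\C)/\FPdim(\D)=\FPdim(I(\be))$ for a surjective tensor functor with right adjoint $I$ is exactly the identity from the proof of \cite[Corollary 8.11]{ENO}, which this paper itself invokes in the proof of Proposition \ref{eqext1}(i). However, your justification of the pivotal step $F(R_\C)=c\,R_\D$ is wrong as written: surjectivity of $F$ does \emph{not} imply that the image of $K(\C)$ generates $K(\D)$, even as a ring. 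For the surjective restriction functor $\Rep(S_3)\to\Rep(\mathbb{Z}/3\mathbb{Z})$ the image of the Grothendieck ring is the rank-two subring spanned by $1$ and $\omega+\omega^2$ (it is closed under multiplication, since $(\omega+\omega^2)^2=2+\omega+\omega^2$), so $F(R_\C)$ is a priori only a common eigenvector for a proper subring, and your appeal to the one-dimensionality of the Frobenius--Perron eigenspace does not apply. The step itself is true, but the correct proof is by positivity: surjectivity makes $F(R_\C)$ a strictly positive vector, it is an eigenvector of multiplication by the strictly positive element $F(R_\C)$ itself, and Perron--Frobenius uniqueness of positive eigenvectors of positive matrices forces $F(R_\C)\in\mathbb{R}_{>0}R_\D$. (Also, the statement concerns a \emph{component} of a quotient, which may be multifusion; one must compose $F$ with the cutting functor $X\mapsto \be_i\otimes F(X)\otimes\be_i$ --- easy, but never addressed.)

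The genuine gap is the subcategory case, which you explicitly leave unproved. The reduction to integrality of each $\lambda_k^2$ is fine, but neither proposed route for that integrality is carried out, and both rest on theorems of the same depth as the statement being proved. The Morita-duality route is, in substance, the actual route: dualizing the inclusion $\C\boxtimes\D^{op}\subseteq\C\boxtimes\C^{op}$ with respect to the module category $\C$ (the same construction this paper uses in the proof of Theorem \ref{di60}) yields a surjective tensor functor $\Z(\C)\to(\C\boxtimes\D^{op})^*_\C$, and then the quotient case gives that $\FPdim(\C)^2/\bigl(\FPdim(\C)\FPdim(\D)\bigr)=\FPdim(\C)/\FPdim(\D)$ is an algebraic integer. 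But this needs two unproved inputs: that duals of injective tensor functors are surjective, and Morita invariance of Frobenius--Perron dimension, $\FPdim(\C^*_\M)=\FPdim(\C)$; note also that your phrasing misidentifies the dual pair --- it is $(\C\boxtimes\C^{op},\,\C\boxtimes\D^{op})$, not categories Morita equivalent to $\C$ and to $\D$. The alternative suggestion, realizing $\lambda_k^2=d_k/\FPdim(\D)$ as an eigenvalue of a nonnegative integer matrix, is pure speculation: no such matrix is exhibited, and none arises from the block $\M_k$ in any evident way (multiplication by $R_\D$ on the block has irrational entries and eigenvalue $\FPdim(\D)$, not $\lambda_k^2$). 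So what you have is a correct proof of one half of the theorem (modulo the repair above) and an honest but unexecuted plan for the other half.
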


\subsection{Kac algebras (abelian extensions) \cite{N1}}

Let $G$ be a finite group, and $G=KL$ be an exact factorization
of $G$ into a product of two subgroups $K,L$ (exactness means
that $K\cap L=1$). Let $\omega\in Z^3(G,\Bbb C^*)$ be a
3-cocycle, and $\psi: C^2(K,\Bbb C^*),\phi\in C^2(L,\Bbb C^*)$ 
be 2-cochains such that $d\psi=\omega|_K,d\phi=\omega|_L$. 
Consider the fusion category $\C=\Vec_{G,\omega,K,\psi}$ of
$(K,\psi)$-biequivariant $(G,\omega)$-graded vector spaces
(see \cite{O1,O2}). 
This category has a module category $\M$ of $(G,\omega)$-graded
vector spaces which are equivariant under $(K,\psi)$ on the left,
and under $(L,\phi)$ on the right. This module category has only
one simple object, so it defines a fiber functor on $\C$, hence a
group-theoretical Hopf algebra $H=H(G,K,L,\omega,\psi,\phi)$ 
with $\Rep(H)=\C$. This Hopf algebra is an
{\it abelian extension} 
$$
\Bbb C\to {\rm Fun}(L)\to H\to \Bbb C[K]\to \Bbb C, 
$$
and is sometimes called a {\it Kac algebra}.

\section{Proof of Theorem~\ref{Th0}}
\label{Sect 3}

We start with the following characterization of Morita equivalence
of fusion categories in terms of their centers.

\begin{theorem}
\label{Morita eq iff}
Two fusion categories $\C$ and $\D$ are Morita equivalent if and only if
$\Z(\C)$ and $\Z(\D)$ are equivalent as braided tensor categories.
\end{theorem}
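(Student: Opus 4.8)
Since the implication that Morita equivalent categories have braided equivalent centers is due to M\"uger, the plan is to prove the converse: a braided tensor equivalence $\phi\colon \Z(\C)\xrightarrow{\sim}\Z(\D)$ forces $\C$ and $\D$ to be Morita equivalent. The organizing principle I would use is that a fusion category can be reconstructed from its center together with a distinguished Lagrangian algebra, and that replacing this algebra by another Lagrangian algebra inside a \emph{fixed} non-degenerate braided category changes the associated fusion category only within its Morita class.

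First I would recall the canonical reconstruction. For a fusion category $\C$, let $F\colon \Z(\C)\to\C$ be the forgetful functor and $I\colon \C\to\Z(\C)$ its right adjoint. Then $L_\C:=I(\be)$ is a connected commutative algebra in $\Z(\C)$ with $\FPdim(L_\C)=\FPdim(\C)$; since $\FPdim(\Z(\C))=\FPdim(\C)^2$, it is Lagrangian, and there is a (well known) canonical equivalence of fusion categories $\C\cong\Z(\C)_{L_\C}$, the category of $L_\C$-modules in $\Z(\C)$. Applying the same construction to $\D$ and transporting along $\phi$, I set $L:=\phi^{-1}(L_\D)$, which is again a Lagrangian algebra in $\Z(\C)$. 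Because $\phi$ is a braided tensor equivalence it carries $L$ to $L_\D$ as algebras and induces an equivalence $\Z(\C)_{L}\cong\Z(\D)_{L_\D}\cong\D$. Thus both $\C$ and $\D$ are realized as categories of modules over Lagrangian algebras in the \emph{single} non-degenerate braided category $\Z(\C)$, namely $\C\cong\Z(\C)_{L_\C}$ and $\D\cong\Z(\C)_{L}$.

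The heart of the argument is then the following statement: if $L_1$ and $L_2$ are Lagrangian algebras in a non-degenerate braided fusion category $\mathcal{B}$, then the fusion categories $\mathcal{B}_{L_1}$ and $\mathcal{B}_{L_2}$ are Morita equivalent. To prove this I would construct the Morita bimodule explicitly as the category $\M={}_{L_1}\mathcal{B}_{L_2}$ of $(L_1,L_2)$-bimodules in $\mathcal{B}$, where the braiding of $\mathcal{B}$ is used to pass between left and right module structures. Relative tensor product over $L_1$ makes $\M$ a left module category over $\mathcal{B}_{L_1}$, and relative tensor product over $L_2$ a right module category over $\mathcal{B}_{L_2}$, so that $\M$ is a $(\mathcal{B}_{L_1},\mathcal{B}_{L_2})$-bimodule category. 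I would then verify that $\M$ is indecomposable as a $\mathcal{B}_{L_1}$-module category and that the dual $(\mathcal{B}_{L_1})^*_\M$ is equivalent to $(\mathcal{B}_{L_2})^{\op}$, which is precisely the assertion that $\M$ implements a Morita equivalence. Specializing $\mathcal{B}=\Z(\C)$, $L_1=L_\C$, $L_2=L$ and combining with the previous paragraph then yields that $\C$ and $\D$ are Morita equivalent.

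I expect the main obstacle to lie in this key statement, specifically in showing that ${}_{L_1}\mathcal{B}_{L_2}$ is indecomposable and that it computes the dual correctly. Indecomposability should follow from the non-degeneracy of $\mathcal{B}$ together with the Lagrangian (dimension) conditions $\FPdim(L_i)^2=\FPdim(\mathcal{B})$, which pin down $\FPdim(\M)$ and $\FPdim\big((\mathcal{B}_{L_1})^*_\M\big)$ to exactly the values required; the identification of the dual with $(\mathcal{B}_{L_2})^{\op}$ requires careful bookkeeping of how the braiding moves the $L_1$- and $L_2$-actions past one another. The forward implication I would simply cite from M\"uger, noting that it follows from the braided equivalence $\Z(\C)\cong\Z(\C^*_\M)$ valid for any indecomposable $\C$-module category $\M$.
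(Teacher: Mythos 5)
Your opening moves coincide exactly with the paper's: the paper also reconstructs $\C\cong L_\C\mbox{-mod}_{\Z(\C)}$ with $L_\C=I_\C(\be)$, transports $L_\D=I_\D(\be)$ along the inverse braided equivalence to a commutative algebra $L\in\Z(\C)$, and records $\D\cong L\mbox{-mod}_{\Z(\C)}$. The divergence, and the gap, is in your key lemma. The claim that $\M={}_{L_1}\B_{L_2}$ is an \emph{indecomposable} $\B_{L_1}$-module category is false in general, and the justification you offer (non-degeneracy of $\B$ plus the Lagrangian dimension conditions ``pin down'' $\FPdim(\M)$) cannot work, because dimension counts are blind to decomposability. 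Concretely, take $L_1=L_2=L_\C$ in $\B=\Z(\C)$: then ${}_{L_\C}\B_{L_\C}$ is the category of $L_\C$-bimodules in $\Z(\C)$, i.e.\ the dual of $\Z(\C)$ with respect to the module category $\C$, which is $\C\bt\C^{\op}$; as a left $\B_{L_\C}\cong\C$-module category this splits into one copy of $\C$ for each simple object of $\C$. The phenomenon is not confined to the degenerate case $\C=\D$: for $\C=\Rep(G)$ and $\D=\Vec_G$ the relevant algebra is $\CC[G]$ with the conjugation action, which decomposes in $\Rep(G)$ as $\oplus_\pi\,\End(V_\pi)$. Indeed, your $\M$ is the paper's object in disguise: under $\C\cong L_1\mbox{-mod}_{\Z(\C)}$ the free-module functor $\Z(\C)\to L_1\mbox{-mod}_{\Z(\C)}$ becomes the forgetful functor $F_\C$, so (using the braiding to rewrite bimodules as one-sided modules) ${}_{L_1}\B_{L_2}\cong F_\C(L)\mbox{-mod}_\C$, and its decomposition is precisely the decomposition $F_\C(L)=\oplus_{i}L_i$ into indecomposable algebras in $\C$ that the paper makes explicit and works around by choosing a single summand $L_i$.

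Even after that repair, the identification of the dual category --- for you, $(\B_{L_1})^*_{\M_i}\cong(\B_{L_2})^{\op}$; for the paper, $L_i\mbox{-bimod}_\C\cong L\mbox{-mod}_{\Z(\C)}$ --- is the real content of the theorem, and your proposal offers only ``careful bookkeeping'' of the braiding in its place. The paper's mechanism is worth noting, since something of this kind is indispensable: (a) Morita invariance of the center gives an equivalence $\Z(\C)\xrightarrow{\sim}\Z(L_i\mbox{-bimod}_\C)$, $Z\mapsto Z\ot L_i$; (b) composing with the surjective forgetful functor and comparing with the free-module functor $\Z(\C)\to L\mbox{-mod}_{\Z(\C)}$ shows that the induced tensor functor $F_i:L\mbox{-mod}_{\Z(\C)}\to L_i\mbox{-bimod}_\C$ is surjective; (c) both sides have Frobenius--Perron dimension $\FPdim(\C)=\FPdim(\D)$ (the latter equality coming from the braided equivalence of the centers), and a surjective tensor functor between fusion categories of equal Frobenius--Perron dimension is an equivalence \cite[Proposition 2.20]{EO}. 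Your key lemma itself --- that any two Lagrangian algebras in a fixed non-degenerate braided fusion category have Morita equivalent module categories --- is true (cf.\ \cite{DMNO}), and it is a clean, symmetric way to organize the proof; but as proposed, with the false indecomposability claim and no substitute for steps (a)--(c), the argument has a genuine gap.
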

\begin{proof}
By the result of M\"uger \cite[Remark 3.18]{M2} Morita equivalent fusion categories
have braided equivalent centers. Thus we need to prove the opposite implication.

Given an algebra $A$ in a fusion category $\C$ let $A-\mbox{mod}_\C$ and $A-\mbox{bimod}_\C$ 
denote, respectively, the categories of right $A$-modules and $A$-bimodules in $\C$.
In the case when the category $\C$ is braided and the algebra $A$ is commutative,
the category $A-\mbox{mod}_\C$ has a natural structure of tensor category, see e.g. \cite{B}. Namely 
any $M\in A-\mbox{mod}_\C$ can be turned into $A-$bimodule using the morphism
$A\ot M\xrightarrow{c_{A,M}}M\ot A\to M$ and the tensor product on 
$A-\mbox{mod}_\C$ is defined to be tensor product $\ot_A$ over $A$, see e.g. \cite{O1}.

For a fusion category $\C$ let $F_\C: \Z(\C)\to \C$ and $I_\C: \C \to \Z(\C)$
denote the forgetful functor and its right adjoint.
The following Lemma is a special case of a more general result obtained in \cite{DMNO}.

\begin{lemma}  (i) The object $A=I_\C(\be)\in \Z(\C)$ has a natural
structure of commutative algebra; moreover for any $X\in \C$ the object $I_\C(X)$ has
a natural structure of right $A-$module.

(ii) The functor $I_\C$ induces an equivalence of tensor categories $\C \simeq A-\mbox{mod}_{\Z(\C)}$.
\end{lemma}

\begin{proof} Consider the category $\C$ as a module category over $\Z(\C)$ via the functor
$F_\C$. Then \cite[Lemma 3.38]{EO} says that $I_\C(X)=\uHom(\be,X)$ for any $X\in \C$. Thus
$A=I_\C(\be)=\uHom(\be, \be)$ has a natural structure of algebra in $\Z(\C)$; for any
$X\in \C$ the object $I_\C(X)=\uHom(\be,X)$ is naturally right $A-$module and the functor
$I_\C(?)=\uHom(\be, ?)$ induces an equivalence of categories $\C \simeq A-\mbox{mod}_{\Z(\C)}$,
see \cite[Theorem 3.17]{EO}. It remains to explain that $A$ is a commutative algebra and 
that the functor $I_\C$ has a structure of tensor functor.

It follows from definitions (see \cite{EO, O1}) that the multiplication on the algebra $A$ can
be described as follows.
Let $\mu \in \Hom(F_\C(I_\C(\be)),\be)$ be the image of $\id$ under the canonical isomorphism
$\Hom(I_\C(\be),I_\C(\be))\simeq \Hom(F_\C(I_\C(\be)),\be)$. The the multiplication morphism 
$m: A\ot A\to A$ is the image
of $\mu \ot \mu$ under the isomorphism $\Hom(F_\C(I_\C(\be))\ot F_\C(I_\C(\be)),\be)\simeq 
\Hom(I_\C(\be)\ot I_\C(\be),I_\C(\be))$. 
By definition the commutativity of $A$ 
means that $m\in \Hom(I_\C(\be)\ot I_\C(\be),I_\C(\be))$ is invariant under the action of the braiding
permuting two copies of $I_\C(\be)$. Using the definition of $m$
we see that this is equivalent to the invariance of 
$\mu \ot \mu \in \Hom(F_\C(I_\C(\be))\ot F_\C(I_\C(\be)),\be)$ under the braiding $c_{A,A}$ permuting 
the two copies of $F_\C(I_\C(\be))\in \C$ (note that $F_\C(I_\C(\be))$ has a canonical
lift to $\Z(\C)$, namely $A=I_\C(\be)$, so we can talk about the braiding). The naturality
of the braiding with a central object implies the commutativity of the following diagram:
\begin{equation}  \label{Acomm}
\xymatrix{F_\C(I_\C(\be))\ot F_\C(I_\C(\be))\ar[rr]^{c_{A,A}} \ar[d]_{\id \ot \mu}&&
F_\C(I_\C(\be))\ot F_\C(I_\C(\be))\ar[d]^{\mu \ot \id}\\ 
F_\C(I_\C(\be))\ot \be \ar[r]^{\id}\ar[dr]_{\mu \ot \id}&F_\C(I_\C(\be))\ar[r]^{\id} &
\be \ot F_\C(I_\C(\be))\ar[dl]^{\id \ot \mu}\\
&\be \ot \be&}
\end{equation}
Applying the functor $\Hom(?,\be)$ to this diagram we obtain the desired invariance of $\mu \ot \mu$.

For any $X\in \C$ let $\mu_X: F_\C(I_\C(X))\to X$ be the image of $\id$ under the canonical
isomorphism $\Hom(I_\C(X), I_\C(X))\simeq \Hom(F_\C(I_\C(X)), X)$ (so we have $\mu_\be =\mu$
in the notation used above) and for $X,Y\in \C$
let $\mu_{X,Y}: I_\C(X)\ot I_\C(Y)\to I_\C(X\ot Y)$ be the image of $\mu_X\ot \mu_Y$ under the
canonical isomorphism $\Hom(F_\C(I_\C(X))\ot F_\C(I_\C(Y)), X\ot Y)\simeq 
\Hom(I_\C(X)\ot I_\C(Y), I_\C(X\ot Y))$ (in the notation above $\mu_{\be,\be}=m$ is the
multiplication morphism on $A=I_\C(\be)$ and $\mu_{X,\be}$ is the morphism making $I_\C(X)$ into
right $A-$module). It is straightforward to verify that $\mu_{X,Y}$ satisfies
all the axioms of a tensor functor except for being an isomorphism. In particular, 
the morphism $\mu_{\be, X}$ makes $I_\C(X)$ into left $A-$module; moreover $\mu_{\be,X}$
and $\mu_{X,\be}$ make $I_\C(X)$ into $A-$bimodule. The diagram similar to \eqref{Acomm}
shows that $\mu_{\be,X}$ can be described as a composition 
$A\ot I_\C(X)\xrightarrow{c_{A,I_\C(X)}}I_C(X)\ot A\xrightarrow{\mu_{X,\be}}I_\C(X)$,
so the structure of $I_\C(X)$ as $A-$bimodule is the same as the structure used in the
definition of tensor structure on $A-\mbox{mod}_{\Z(\C)}$.

It is immediate to check that $\mu_{X,Y}$ factorizes through the canonical map 
$I_\C(X)\ot I_\C(Y)\to I_\C(X)$ as
$I_\C(X)\ot I_\C(Y)\to I_\C(X)\ot_AI_\C(Y)\xrightarrow{\tilde \mu_{X,Y}} I_\C(X\ot Y)$
and that $\tilde \mu_{X,Y}$ satisfies all the axioms of a tensor functor with a possible exception 
of being an isomorphism. Finally one verifies that for $X=F_\C(Z)$ with $Z\in \Z(\C)$ we have
$I_\C(X)=Z\ot A$ (as $A-$modules) and under this isomorphism $\tilde \mu_{X,Y}$ goes
to the canonical isomorphism $I_\C(X)\ot_AI_\C(Y)=Z\ot I_\C(Y)\simeq I_\C(F_\C(Z)\ot Y)$
from \cite[Proposition 3.39 (iii)]{EO}. Since the functor $F_\C$ is surjective (see
\cite[Proposition 3.39 (i)]{EO}) we get that $\tilde \mu_{X,Y}$ is always an isomorphism.
Thus the isomorphisms $\tilde \mu_{X,Y}$ define a tensor structure on the functor $I_\C$
and Lemma is proved.
\end{proof}

Let $\C,\D$ be fusion categories such that there is a braided tensor equivalence
$a: \Z(\C)\cong  \Z(\D)$.
Since $I_\D(\be)$ is a commutative algebra in $\Z(\D)$ and $a$ is a {\em braided}
equivalence, we have that
$L:=a^{-1}(I_\D(\be))$  is a commutative  algebra in $\Z(\C)$. 
Furthermore, 
\begin{equation}
\label{D=L-mod}
\D  \cong L-\mbox{mod}_{\Z(\C)}
\end{equation}
as a fusion category.

Note that $L$  is indecomposable in $\Z(\C)$ but 
might be decomposable as an algebra {\em in $\C$}, i.e., 
\[
L =\bigoplus_{i\in J} \, L_i,
\]
where $L_i,\, i\in J,$ are indecomposable algebras in $\C$ such that the 
multiplication of $L$ is zero on $L_i \ot L_j,\, i\neq j$ 
(e.g., if $\C =\Rep(G)$ then $L =\Fun(G,\, k)$ with the adjoint action of $G$
and $J$ is the set of conjugacy classes of $G$).
\footnote{Here and below 
we abuse notation and write $L$ for an object of $\Z(\C)$ and its
forgetful image in $\C$.}

We would like to show  that for each $i\in J$
\begin{equation}
\label{Lbimod}
L_i-\mbox{bimod}_\C \cong L-\mbox{mod}_{\Z(\C)}.
\end{equation}
In view of \eqref{D=L-mod} this will  mean that $\D$ is dual to $\C$ 
with respect to the $\C$-module category  $L_i-\mbox{mod}_\C$ 
for any $i\in J$.

Consider the following commutative diagram of tensor functors:

\begin{equation*}
\xymatrix{
\Z(\C)  \ar[d]_{Z \mapsto Z\ot L} \ar[rr]^{Z\mapsto Z\ot L_i} & &
\Z(L_i-\mbox{bimod}_\C) \ar[d]^{F_{L_i-\mbox{bimod}_\C}} \\
L-\mbox{mod}_{\Z(\C)}  \ar[r]^(.35){F_\C} & \bigoplus\, L_i-\mbox{bimod}_\C \subset L-\mbox{bimod}_\C
\ar[r]^(.63){\pi_i} & L_i-\mbox{bimod}_\C.
}
\end{equation*}
 Here $\pi_i$ is a projection from 
$L-\mbox{bimod}_\C =\oplus_{ij}\, (L_i-L_j)-\mbox{bimod}_\C$ to its
$(i,\,i)$ component. We have $\pi_i(X\ot L) = X \ot L_i$
for all $X\in \C$.  The top arrow is an equivalence by \cite[Remark 3.18]{M2} 
(see also \cite[Corollary 3.35]{EO})  and the forgetful functor
$\Z(L_i-\mbox{bimod}_\C)\to L_i-\mbox{bimod}_\C$ (the right down arrow)
is surjective. Hence,  the composition  $F_i := \pi_i F_\C$ of the functors
in the bottom row is surjective. But $F_i$ is a tensor functor between
fusion categories of equal Frobenius-Perron dimension and hence it is an 
equivalence by \cite[Proposition 2.20]{EO}.
\end{proof}

\begin{remark}
\begin{enumerate}
\item[(1)] The above characterization of Morita equivalence was announced earlier and independently
by A.~Kitaev and M.~M\"uger.
\item[(2)] For group-theoretical categories Theorem~\ref{Morita eq iff} was proved in \cite{NN}.
\item[(3)] A crucial idea of the proof of Theorem~\ref{Morita eq iff} (which is to consider a commutative
algebra $L\in \Z(\C)$ as an algebra in $\C$) also appears in \cite[Theorem 3.22]{KR}. 
\end{enumerate}
\end{remark}

\begin{lemma}
\label{mor} 
Let $G$ be a finite group, let $\D$ be a $G$-extension of a fusion category $\D_0$,
and let $\tilde{\D}_0$  be a fusion category Morita equivalent to $\D_0$.
There exists a $G$-extension  $\tilde{\D}$ of $\tilde{\D}_0$
which is  Morita equivalent to $\D$ .
\end{lemma}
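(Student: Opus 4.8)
The plan is to realize the Morita equivalence between $\D_0$ and $\tilde{\D}_0$ by an algebra sitting inside the trivial component of the grading, and then to induce this algebra up to the whole graded category. Recall from Section~\ref{Sect 2.5} that there is an algebra $A$ in $\D_0$ with $\tilde{\D}_0 \cong A\mbox{-bimod}_{\D_0}$, where $A$ is the unit object of this fusion category; in particular $A$ is simple as an $A$-bimodule in $\D_0$. Writing $\D = \bigoplus_{g\in G}\D_g$ with $\D_e = \D_0$, I regard $A$ as an algebra in $\D$ via the inclusion $\D_0 = \D_e \subset \D$, and set
\[
\tilde{\D} := A\mbox{-bimod}_{\D}.
\]
Since $A\mbox{-mod}_{\D}$ is a $\D$-module category and $\tilde{\D}$ is its dual, $\tilde{\D}$ will be Morita equivalent to $\D$ as soon as it is a genuine fusion category.

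First I would check that $\tilde{\D}$ is fusion. Any sub-$A$-bimodule of $A$ in $\D$ is in particular a subobject of $A\in\D_0$, hence lies in $\D_0$, since a grading component is a full subcategory closed under subobjects; therefore the sub-bimodules of $A$ in $\D$ coincide with those in $\D_0$. As $A$ is simple as an $A$-bimodule in $\D_0$, it remains simple as an $A$-bimodule in $\D$, so the unit of $\tilde{\D}$ is simple, $\tilde{\D}$ is a fusion category, and equivalently $A\mbox{-mod}_{\D}$ is an indecomposable $\D$-module category. This yields the Morita equivalence between $\tilde{\D}$ and $\D$.

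Next I would produce the $G$-grading. Because $A\in \D_e$ and $\Hom(\D_g,\D_h)=0$ for $g\neq h$, for any $A$-bimodule $M$ the structure maps $A\ot M\to M$ and $M\ot A\to M$ preserve the decomposition $M=\bigoplus_g M_g$ induced by the grading of $\D$ (indeed $A\ot M_g\in\D_e\ot\D_g=\D_g$ and $M_g\ot A\in\D_g$), so each $M_g$ is a sub-bimodule. Hence $\tilde{\D} = \bigoplus_{g\in G}\tilde{\D}_g$, where $\tilde{\D}_g$ consists of $A$-bimodules supported in $\D_g$. The relative tensor product $M\ot_A N$ of bimodules supported in $\D_g$ and $\D_h$ is a subquotient of $M\ot N\in \D_{gh}$, so $\tilde{\D}_g\ot\tilde{\D}_h\subseteq\tilde{\D}_{gh}$, and duality carries $\tilde{\D}_g$ into $\tilde{\D}_{g^{-1}}$ because dualizing in $\D$ sends $\D_g$ to $\D_{g^{-1}}$. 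The trivial component is $\tilde{\D}_e = A\mbox{-bimod}_{\D_e} = A\mbox{-bimod}_{\D_0}\cong\tilde{\D}_0$.

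The one point that needs a genuine argument, and the closest thing to an obstacle, is faithfulness of the grading. For each $g\in G$ choose a nonzero $X\in\D_g$, which is possible since $\D$ is a faithful $G$-extension, and consider the free $A$-bimodule $A\ot X\ot A$. Its underlying object lies in $\D_e\ot\D_g\ot\D_e=\D_g$ and is nonzero, so $\tilde{\D}_g\neq 0$. Thus the grading is faithful, $\tilde{\D}$ is a $G$-extension of $\tilde{\D}_0$, and it is Morita equivalent to $\D$, as required.
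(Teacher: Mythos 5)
Your proposal is correct and takes essentially the same route as the paper: realize the Morita equivalence by an algebra $A$ in $\D_0$ with $\tilde{\D}_0$ equivalent to the $A$-bimodules in $\D_0$, and take $\tilde{\D}$ to be the category of $A$-bimodules in all of $\D$, which inherits the $G$-grading because $A$ lies in the trivial component. The only difference is one of bookkeeping: where the paper asserts that the category of $A$-modules in $\D$ is indecomposable, you verify the equivalent statement that $A$ remains a simple bimodule in $\D$, and you additionally spell out the faithfulness of the induced grading.
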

\begin{proof} Let $A$ be an indecomposable algebra in $\D_0$ such that $\tilde{\D}_0$ is equivalent
to the category of $A$-bimodules in $\D_0$.  Observe that the tensor 
category $\tilde{\D}=A-\mbox{bimod}_\D$ of $A$-bimodules {\em in} $\D$ 
inherits the  $G$-grading  (since $A$ belongs to the trivial component of $\D$). 
Since the category of $A$-modules in $\D$ is indecomposable,  $\tilde{\D}$ 
is a fusion category.
\end{proof}


We are now ready to prove Theorem~\ref{Th0}.

Suppose $\C$ is Morita equivalent to a $G$-extension
$\widetilde{\C}$ of $\D$. By  \cite[Remark 3.18]{M2} 
there is a braided tensor equivalence $\Z(\C)\cong
\Z(\widetilde{\C})$. 
By Proposition~\ref{eqext1}(ii)
$\Z(\widetilde{\C})$ contains a Tannakian subcategory 
$\E=\Rep(G)$ with the specified property, as desired. 

Conversely, suppose that $\Z(\C)$ contains a Tannakian subcategory $\E=\Rep(G)$ such that
the de-equivariantization $\Z$ of $\E'$ by $\E$ is equivalent to $\Z(\D)$ as a braided tensor category.
Let $I: \D \to \Z(\D) \cong \Z$ be the composition of the left adjoint of the forgetful 
functor $\Z(\D) \to \D$ with the equivalence $\Z(\D) \cong \Z$.
Then $A_1 := I(\be)$ is a commutative algebra in $\Z$. 
Let $J:  \Z = A_1-\mbox{mod}_{\E'} \to \E'$ be the functor forgetting
the $A_1$-module structure then $A := J(A_1)$
is a commutative algebra in $\E'$ and, hence, in $\Z(\C)$.

It was explained in \cite{DGNO2}  that for every $Z\in \Z(\C)$  the object $Z \ot A$ has
a structure of an object in the center of $A-\mbox{mod}_{\C}$ and that the
functor $\Z(\C) \to \Z(A-\mbox{mod}_{\C}) : Z \mapsto A \ot Z$
is a braided tensor equivalence. By Theorem~\ref{Morita eq iff} $\C$ and $A-\mbox{mod}_{\C}$
are Morita equivalent. 

The composition $\Z(\C) \cong \Z(A-\mbox{mod}_{\C})\to A-\mbox{mod}_{\C}$
identifies with the free $A$-module functor.
This functor takes $\E=\Rep(G)\subset \Z(A-\mbox{mod}_{\C})$ to $\Vec$. By Proposition~\ref{eqext1}
$A-\mbox{mod}_{\C}$ is a $G$-extension of some fusion category  $\tilde{\D}$ and 
there is a braided tensor  equivalence $\Z(\D) \cong \Z(\tilde{\D})$.
By Theorem~\ref{Morita eq iff} $\D$ and $\tilde{\D}$ are Morita equivalent.
So $\D$ is Morita equivalent to a $G$-extension of $\D$ by Lemma~\ref{mor},
as required.

\section{Properties of weakly group-theoretical 
and solvable fusion categories}
\label{Sect 4}

\subsection{Properties of weakly group-theoretical categories}

The basic properties of weakly group-theoretical fusion categories
(see Definition \ref{wgt}) are summarized in the following two Propositions. 

\begin{proposition}
\label{basicp1-bis}
The class of weakly group-theoretical categories is closed
under taking extensions, equivariantizations, Morita
equivalent categories, tensor products, 
the  center, subcategories and component categories of 
quotient categories.
\end{proposition}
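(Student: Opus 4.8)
The goal is to prove Proposition \ref{basicp1-bis}: the class of weakly group-theoretical fusion categories is closed under the listed operations. Recall that $\C$ is weakly group-theoretical iff it is Morita equivalent to a nilpotent category. The plan is to dispose of the easy closure properties first and then reduce the harder ones to the definition using the machinery assembled in Sections 2 and 3.

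\medskip

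\emph{The routine cases.} Closure under \emph{Morita equivalence} is immediate, since Morita equivalence is an equivalence relation (Section \ref{Sect 2.5}) and being Morita equivalent to a nilpotent category is Morita-invariant by transitivity. Closure under the \emph{center} follows from Theorem \ref{Morita eq iff}: if $\C$ is Morita equivalent to a nilpotent $\D$, then $\Z(\C)\cong\Z(\D)$ as braided categories, and $\Z(\D)\cong\D\boxtimes\D^{\mathrm{rev}}$-type arguments plus the fact that the center of a nilpotent category is nilpotent (the center of a $G$-extension is built from equivariantizations and extensions via Propositions \ref{eqext1}, \ref{eqext2}) show $\Z(\C)$ is again weakly group-theoretical; more directly, $\Z(\C)$ is Morita equivalent to $\C\boxtimes\C^{\mathrm{op}}$, reducing to the tensor-product case below. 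For \emph{tensor products}, if $\C_i$ is Morita equivalent to a nilpotent $\D_i$, then $\C_1\boxtimes\C_2$ is Morita equivalent to $\D_1\boxtimes\D_2$, which is nilpotent since the universal grading group of a Deligne product is the product of the universal grading groups and the adjoint subcategory behaves multiplicatively.

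\medskip

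\emph{Extensions and equivariantizations.} These are the heart of the matter, and Theorem \ref{Th0} together with Propositions \ref{eqext1} and \ref{eqext2} is exactly the tool to handle them. For a $G$-\emph{extension} $\C$ of a weakly group-theoretical category $\D$: by definition $\D$ is Morita equivalent to a nilpotent $\D'$, so I would apply Lemma \ref{mor} to replace the $G$-extension $\C$ of $\D$ by a Morita-equivalent $G$-extension $\tilde\C$ of $\D'$. Since $\D'$ is nilpotent, $\tilde\C$ is a $G$-extension of a nilpotent category, hence nilpotent (an extension of a nilpotent category by a finite group is nilpotent by the very definition of nilpotency via iterated gradings). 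Thus $\C$ is Morita equivalent to the nilpotent category $\tilde\C$, as required. For \emph{equivariantizations} $\C=\D^G$ with $\D$ weakly group-theoretical: by Proposition \ref{equiv}, $\D^G$ is Morita equivalent to the crossed product $\D\rtimes G$, which is a $G$-graded category with trivial component $\D$, i.e. a $G$-extension of $\D$. By the extension case just proved, this is weakly group-theoretical, and hence so is $\C=\D^G$ by Morita invariance.

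\medskip

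\emph{Subcategories and quotient components.} For these I expect the main obstacle to lie, since being a subcategory is not obviously compatible with Morita equivalence. The approach is to pass to the center: by Theorem \ref{Morita eq iff} it suffices to control $\Z$ of the subcategory, and for a fusion subcategory $\B\subset\C$ there is a braided embedding relating $\Z(\B)$ to a subquotient of $\Z(\C)$. I would use the fact, established in the surrounding theory (and derivable from Propositions \ref{eqext1}, \ref{eqext2}), that the property of being weakly group-theoretical passes to any fusion category whose center is a subquotient realized via de-equivariantization and centralizer operations on the center of a weakly group-theoretical category; concretely, the center of a nilpotent category is nilpotent and hence every fusion subcategory and every component of a quotient is again weakly group-theoretical by dimension-counting with Theorem \ref{divis1} and the structural results on Tannakian subcategories of centers. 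The hard part will be verifying that the subcategory/quotient operations at the level of $\C$ translate into operations on $\Z(\C)$ that preserve the ``Morita equivalent to nilpotent'' property; I would isolate this as the key lemma and prove it by combining Theorem \ref{Th0} with the observation that nilpotency is inherited by subcategories and quotient components (a known property of the universal grading filtration).
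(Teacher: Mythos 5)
Your handling of Morita equivalence, tensor products, the center, extensions, and equivariantizations is correct and coincides with the paper's own proof: extensions are handled by applying Lemma \ref{mor} to a nilpotent category Morita equivalent to the base (an extension of a nilpotent category being nilpotent by definition), equivariantizations by Proposition \ref{equiv} together with the observation that $\D\rtimes G$ is a $G$-extension of $\D$, and the center by the Morita equivalence of $\Z(\C)$ with $\C\boxtimes\C^{op}$, reducing to tensor products.

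The genuine gap is in the cases of subcategories and components of quotient categories, which you do not actually prove: you explicitly defer the ``key lemma'' relating operations on $\C$ to operations on $\Z(\C)$, and the sketch you give rests on a false assertion. It is not true that the center of a nilpotent fusion category is nilpotent: $\Vec_G$ is pointed, hence nilpotent, for every finite group $G$, yet $\Z(\Vec_G)$ contains $\Rep(G)$ as a fusion subcategory, and $\Rep(G)$ is nilpotent only when $G$ is; so by the very fact you invoke (nilpotency is inherited by fusion subcategories), $\Z(\Vec_{S_3})$ is not nilpotent. (What is true is that the center of a nilpotent category is \emph{weakly group-theoretical}, but for a subcategory $\D\subset\C$ that is exactly what needs proving, and $\D$ need not sit inside the nilpotent category Morita equivalent to $\C$, so nothing follows.) Moreover, the center is not functorial under inclusions: for $\D\subset\C$ there is in general no braided embedding of $\Z(\D)$ into $\Z(\C)$, nor a recipe producing $\Z(\D)$ from $\Z(\C)$ via the centralizer/de-equivariantization operations of Propositions \ref{eqext1} and \ref{eqext2}, so Theorem \ref{Morita eq iff} gives no purchase here. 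The paper's argument (following \cite[Proposition 8.44]{ENO}) avoids centers entirely and uses duality with respect to a fixed module category: choose an indecomposable $\C$-module category $\M$ such that $\C_\M^*$ is nilpotent; restricting the action makes $\M$ a $\D$-module category, and $\D_\M^*$ is a quotient (in general multifusion) category of the nilpotent category $\C_\M^*$, since duality interchanges injective and surjective tensor functors. Every component category in a quotient of a nilpotent category is nilpotent, and $\D$ is Morita equivalent to the component $\D_{\M_i}^*$ for an indecomposable $\D$-module summand $\M_i$ of $\M$; hence $\D$ is weakly group-theoretical. The case of a component of a quotient category then reduces to the subcategory case by taking duals. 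Your intuition that nilpotency passes to subcategories and quotient components is correct, but it must be applied to the dual category $\C_\M^*$, not to $\Z(\C)$.
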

\begin{proof}
The invariance under taking 
Morita equivalent categories and tensor products is obvious.
The invariance under taking extensions follows from Lemma~\ref{mor} and
the invariance under equivariantizations follows
from Proposition~\ref{equiv}.
The invariance under taking the  center 
then follows from Morita invariance, as $\Z(\C)$ is Morita
equivalent to $\C\boxtimes \C^{op}$.
The rest of the proof is similar to the proof of Proposition 8.44
in \cite{ENO}. Namely, to prove the invariance under 
taking subcategories, let $\C$ be a weakly group-theoretical
category, and $\D\subset \C$ a fusion subcategory. Let $\M$ be an
indecomposable $\C$-module category such that $\C_\M^*$ is
nilpotent. Then every component category of $\D_\M^*$ is
nilpotent, since it is easy to see that every component category
in a quotient of a nilpotent category is nilpotent. 
The case of a component in a quotient category reduces to the
case of a subcategory by taking duals.  
\end{proof}

\begin{proposition}
\label{basicp1}
A fusion category $\C$ is weakly group-theoretical if and only if 
there exists a sequence of non-degenerate braided categories 
\[
\Vec=\D_0,\, \D_1,\dots, \D_n=\Z(\C)
\]
and finite groups $G_1,\dots ,G_n$, such that for all $1\le i\le n$, the Tannakian category
$\Rep(G_i)$ is contained in $\D_i$ as an isotropic subcategory, and 
$\D_{i-1}$ is the de-equivariantization of 
the M\"uger centralizer $\Rep(G_i)'$ of $\Rep(G_i)$ in $\D_i$. 
\end{proposition}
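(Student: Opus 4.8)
The plan is to prove both implications by induction on the length $n$, translating the defining chain of a weakly group-theoretical category (obtained from $\Vec$ by iterated extensions, equivariantizations, and Morita equivalences) into the stated chain of non-degenerate braided categories living at the level of the center, and conversely. The conceptual engine is the observation, already available in the excerpt, that the three basic operations—$G$-extension, $G$-equivariantization, and Morita equivalence—all have a uniform description after passing to the Drinfeld center. Indeed, Proposition~\ref{eqext1}(ii) and Proposition~\ref{eqext2}(ii) say that if $\C$ is a $G$-extension \emph{or} a $G$-equivariantization of $\D$, then $\Z(\C)$ contains a Tannakian subcategory $\E=\Rep(G)$ whose de-equivariantization of the M\"uger centralizer $\E'$ is braided equivalent to $\Z(\D)$; and Theorem~\ref{Morita eq iff} says Morita equivalence is \emph{invisible} at the level of the center, i.e. $\Z(\C)\cong\Z(\D)$ as braided categories. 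This is exactly the data packaged in the statement, with $\D_i=\Z(\C_i)$ for the intermediate categories $\C_i$.

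For the forward direction, I would first observe that, up to Morita equivalence, a weakly group-theoretical category is nilpotent (Definition~\ref{wgt}), and a nilpotent category is built from $\Vec$ by a chain of group extensions. By Proposition~\ref{basicp1-bis} the center of a weakly group-theoretical category is again weakly group-theoretical, so it suffices to realize the chain for $\Z(\C)$. Setting $\D_n=\Z(\C)$, I would peel off one extension at a time: if $\C$ (or its Morita representative) is a $G_n$-extension of $\D'$, then Proposition~\ref{eqext1}(ii) produces $\E_n=\Rep(G_n)\subset\D_n$ with the de-equivariantization of $\E_n'$ braided equivalent to $\Z(\D')$, which I take to be $\D_{n-1}$. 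The key point needed here is that $\E_n$ sits inside $\D_n$ as an \emph{isotropic} (i.e. Tannakian and contained in its own centralizer) subcategory, and that $\D_{n-1}$ is again non-degenerate; both follow from Remark~\ref{modularization}, since $\E_n\subset\D_n'=\Vec$ fails only mildly and de-equivariantizing a non-degenerate category by an isotropic Tannakian subcategory yields a non-degenerate category. Iterating and invoking the canonical equivalence $\Z(\C)\cong\Z(\C^{op})$ together with Morita invariance of the center disposes of the equivariantization steps as well, via Proposition~\ref{eqext2}(ii).

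For the converse, given the chain $\Vec=\D_0,\dots,\D_n=\Z(\C)$, I would run the induction in the opposite order, using Theorem~\ref{Morita eq iff} and Theorem~\ref{Th0} to reconstruct a weakly group-theoretical model. At each stage, $\D_i$ contains $\Rep(G_i)$ as an isotropic subcategory with $\D_{i-1}=(\Rep(G_i)')_{G_i}$; by Theorem~\ref{Th0} (applied with the de-equivariantization hypothesis) any fusion category whose center is $\D_i$ is Morita equivalent to a $G_i$-extension of a category whose center is $\D_{i-1}$. Starting from $\D_0=\Vec$, whose unique Morita class up to center is $\Vec$, I ascend to a category whose center is $\D_n=\Z(\C)$, and Theorem~\ref{Morita eq iff} then identifies it, up to Morita equivalence, with $\C$ itself; being built by iterated extensions from $\Vec$, it is nilpotent, so $\C$ is weakly group-theoretical.

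The main obstacle I anticipate is bookkeeping the non-degeneracy and isotropy conditions across the chain—specifically, verifying that each de-equivariantized centralizer $(\Rep(G_i)')_{G_i}$ is itself non-degenerate so that the next step of the induction applies, and that the Tannakian subcategories produced by Propositions~\ref{eqext1} and \ref{eqext2} are genuinely isotropic inside the ambient non-degenerate category $\D_i$ (i.e. that $\Rep(G_i)\subset\Rep(G_i)'$). This is where Remark~\ref{modularization} and the dimension-count $\FPdim(\D)\FPdim(\D')=\FPdim(\C)$ for non-degenerate categories must be deployed carefully; the rest of the argument is a faithful translation of the three elementary operations through the center.
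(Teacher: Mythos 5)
Your ``only if'' direction is essentially the paper's argument: since Morita equivalent categories have braided equivalent centers, one may assume $\C$ is nilpotent, and then iterating Proposition~\ref{eqext1}(ii) along the defining chain of extensions produces the required sequence (Tannakian subcategories are automatically isotropic, and each $\D_i$ arises as a Drinfeld center, hence is non-degenerate). Two small points: the reduction to the nilpotent case should be justified by Morita invariance of the center, not by closure of the class under taking centers; and the discussion of equivariantization steps via $\Z(\C^{op})$ and Proposition~\ref{eqext2}(ii) is superfluous, since the chain defining a nilpotent category consists of extensions only.

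The ``if'' direction, however, has a genuine gap: you run the induction \emph{upward}, from $\D_0=\Vec$ to $\D_n=\Z(\C)$, invoking Theorem~\ref{Th0} in the form ``any fusion category whose center is $\D_i$ is Morita equivalent to a $G_i$-extension of a category whose center is $\D_{i-1}$.'' That statement only lets you \emph{descend}: it decomposes a fusion category already known to have center $\D_i$. To ascend you need, at each step, the \emph{existence} of a fusion category whose Drinfeld center is $\D_i$, and the only $\D_i$ given to be a center is the top one, $\D_n=\Z(\C)$. Realizability of the intermediate $\D_i$'s as centers is a nontrivial existence statement --- equivalent to producing a Lagrangian algebra in $\D_i$, e.g.\ by lifting the canonical Lagrangian algebra of $\D_{i-1}$ through the de-equivariantization, which is \'etale-algebra technology in the spirit of \cite{DMNO} that appears nowhere in your cited toolkit; even the first step, realizing $\D_1$ with its Lagrangian subcategory $\Rep(G_1)$ as a center, requires \cite[Corollary 4.14]{DGNO1}, which you never invoke. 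The paper avoids all of this by inducting \emph{downward}: starting from $\C$ itself, Theorem~\ref{Th0} shows $\C$ is Morita equivalent to a $G_n$-extension of some $\tilde{\C}$ with $\Z(\tilde{\C})$ braided equivalent to $\D_{n-1}$ (so the intermediate terms are exhibited as centers as a byproduct of the descent); the inductive hypothesis applies to $\tilde{\C}$, and closure of the weakly group-theoretical class under extensions and Morita equivalence (Proposition~\ref{basicp1-bis}, Lemma~\ref{mor}) finishes the proof, with the base case handled by the DGNO1 result. Your closing appeal to Theorem~\ref{Morita eq iff} would indeed work if a nilpotent category with center $\Z(\C)$ had been constructed, but it is precisely that construction which your argument leaves unjustified.
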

\begin{proof} 
To proof the ``only if'' part it suffices to assume that $\C$ is nilpotent.
Suppose first that $\C$ is a $G$-extension 
of another category $\D$. By Proposition~\ref{eqext1} $\Z(\C)$
contains a Tannakian subcategory $\Rep(G)$ such that the 
de-equivariantization of 
$\Rep(G)'$ by $\Rep(G)$ is $\Z(\D)$. Since every nilpotent category is obtained from $\Vec$
by a sequence of extensions, this implies the desired statement. 

To prove the ``if'' part, we argue by induction in $n$.
For $n=1$ we must have a Lagrangian subcategory $\Rep(G_1)\subset \Z(\C)$
and so $\C$ is group-theoretical by \cite[Corollary 4.14]{DGNO1}. Suppose the statement
is true for $n=l$ and let $\Vec=\D_0$, $\D_1$,...,$\D_{l+1}=\Z(\C)$
be a sequence as in the statement of the Proposition.  By Theorem~\ref{Mor eq  graded}
$\C$ is  Morita equivalent to a $G_1$-extension of a fusion category 
$\tilde{\C}$ such that $\Z(\tilde{\C})$ is braided equivalent to 
the de-equivariantization of $\Rep(G_1)'$ by $\Rep(G_1)$ in $\D_1$.
By induction, $\tilde{\C}$ is weakly group-theoretical, hence
$\C$ is weakly group-theoretical by Proposition~\ref{basicp1-bis}.
\end{proof}

\begin{remark}
Note that the class of group-theoretical categories 
is {\em not} closed under taking extensions and equivariantizations, 
see \cite{Nk2} and \cite[Remark 8.48]{ENO}. 
\end{remark} 

\subsection{Properties of solvable fusion categories.}

Let $\C$ be a fusion category.

\begin{proposition}\label{basicp2}
The following conditions are equivalent: 
\begin{enumerate}
\item[(1)] $\C$ is solvable in the sense of Definition~\ref{solv}(i).
\item[(2)] $\Z(\C)$ admits a
chain as in Proposition \ref{basicp1}, where all the groups
$G_i$ are cyclic of prime order. 
\item[(3)] There is a sequence of fusion categories 
$\C_0=\Vec,\, \C_1,\dots, \C_n = \C$
and a sequence $G_1, \dots G_n$ of cyclic groups of prime order 
such that $\C_{i}$ is obtained from $\C_{i-1}$ either by a 
$G_i$-equivariantization or as a $G_i$-extension. 
\end{enumerate}
\end{proposition}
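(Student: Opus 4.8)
The plan is to establish the cycle of implications $(1)\Rightarrow(2)\Rightarrow(3)\Rightarrow(1)$. Condition $(2)$ is just the cyclic-prime refinement of the chain characterizing weakly group-theoretical categories in Proposition~\ref{basicp1}, so the center machinery of Propositions~\ref{eqext1} and~\ref{eqext2} is exactly what is needed to move between the three formulations, while Proposition~\ref{equiv} and Lemma~\ref{mor} let one trade equivariantizations for extensions and track everything through Morita equivalence.

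For $(1)\Rightarrow(2)$ I would first use that Morita equivalent categories have braided equivalent centers (Theorem~\ref{Morita eq iff}), so it suffices to build the chain for $\Z(\C')$ where $\C'$ is cyclically nilpotent. Writing $\Vec=\C'_0\subset\C'_1\subset\cdots\subset\C'_m=\C'$ with each $\C'_j$ a $\Bbb Z/p_j\Bbb Z$-extension of $\C'_{j-1}$, Proposition~\ref{eqext1}(ii) gives for each $j$ a Tannakian subcategory $\Rep(\Bbb Z/p_j\Bbb Z)\subset\Z(\C'_j)$ whose M\"uger centralizer de-equivariantizes to $\Z(\C'_{j-1})$; setting $\D_j:=\Z(\C'_j)$ (these are non-degenerate, being centers, and contain the isotropic $\Rep(\Bbb Z/p_j\Bbb Z)$) yields the required chain. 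For $(3)\Rightarrow(1)$ I would induct on the length $n$ of the defining sequence. If $\C_n$ is a $G_n$-extension of $\C_{n-1}$, then, $\C_{n-1}$ being Morita equivalent to a cyclically nilpotent category by induction, Lemma~\ref{mor} produces a $G_n$-extension of that cyclically nilpotent category which is Morita equivalent to $\C_n$, and a cyclic-prime extension of a cyclically nilpotent category is again cyclically nilpotent. If instead $\C_n=\C_{n-1}^{G_n}$ is a $G_n$-equivariantization, Proposition~\ref{equiv} shows it is Morita equivalent to the crossed product $\C_{n-1}\rtimes G_n$, a $G_n$-extension of $\C_{n-1}$, and the previous case applies.

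The main work, and the essential use of primality, is in $(2)\Rightarrow(3)$, which I would again prove by induction on the length $n$ of the chain $\Vec=\D_0,\dots,\D_n=\Z(\C)$. The base case $\Z(\C)=\Vec$ forces $\FPdim(\C)=1$, hence $\C=\Vec$. For the inductive step consider the top group $G_n$, cyclic of prime order $p$, the Tannakian subcategory $\E=\Rep(G_n)\subset\Z(\C)$, and the forgetful functor $F\colon\Z(\C)\to\C$. The key point is a dichotomy: the full subcategory of objects of $\E$ sent by $F$ to multiples of $\be$ is a fusion subcategory of $\Rep(\Bbb Z/p\Bbb Z)$ corresponding to a subgroup of $\Bbb Z/p\Bbb Z$, hence is either $\Vec$ or all of $\E$. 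In the first case $F$ is injective on the invertible objects of $\E$ (which remain invertible in $\C$), so $\E$ embeds into $\C$ and Proposition~\ref{eqext2} realizes $\C$ as a $G_n$-equivariantization of a category $\D$ with $\Z(\D)$ the de-equivariantization of $\E'$, namely $\D_{n-1}$; in the second case $F$ maps $\E$ into $\Vec$ and Proposition~\ref{eqext1} realizes $\C$ as a $G_n$-extension of such a $\D$. Either way $\Z(\D)=\D_{n-1}$ inherits the shorter chain, so by induction $\D$ satisfies $(3)$, and prepending the corresponding equivariantization or extension shows $\C$ satisfies $(3)$.

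The hard part is precisely this dichotomy, and it is where the hypothesis that the $G_i$ are cyclic of prime order is indispensable: for a general finite group $\Rep(G)$ has intermediate Tannakian subcategories and $F$ may neither annihilate nor embed it, so $\C$ cannot be presented as a single extension or equivariantization — this is exactly why weakly group-theoretical categories (Proposition~\ref{basicp1}) need not be solvable. One should also verify, as a routine consistency check, that the Tannakian subcategory furnished by Propositions~\ref{eqext1}(ii) and~\ref{eqext2}(ii) can be identified with the given $\E=\Rep(G_n)$, so that the de-equivariantization of its M\"uger centralizer is precisely the $\D_{n-1}$ appearing in the chain and the induction hypothesis genuinely applies to $\D$.
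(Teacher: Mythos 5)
Your proposal is correct and follows essentially the same route as the paper: $(1)\Rightarrow(2)$ by iterating Proposition~\ref{eqext1}, $(2)\Rightarrow(3)$ by induction on the chain length using the prime-order dichotomy (the image of $\Rep(G_n)$ under the forgetful functor either lands in $\Vec$ or embeds, invoking Propositions~\ref{eqext1}(i) and~\ref{eqext2}(i)), and $(3)\Rightarrow(1)$ via Proposition~\ref{equiv} and Lemma~\ref{mor}. In fact you supply justifications the paper leaves implicit, namely why primality forces the dichotomy and why the de-equivariantization appearing in the chain is indeed $\Z(\D)$.
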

\begin{proof} 
(1) $\Rightarrow$ (2). This follows by iterating  Proposition~\ref{eqext1}.

(2) $\Rightarrow$ (3). We argue by induction in $n$. 
Consider the image of $\Rep(G_n)$ in $\C$ under the forgetful functor $\Z(\C)\to \C$. 
Since $G_n$ is cyclic of prime order, either $\Rep(G_n)$
maps to $\Vec$, in which case $\C$ is a $G_n$-extension of some
category $\D$ by Proposition~\ref{eqext1}(i), or $\Rep(G_n)$ embeds into $\C$, 
in which case $\C$ is a $G_n$-equivariantization of some category $\D$
by Proposition~\ref{eqext2}(i). 
In both cases, $\Z(\D)=\D_{n-1}$, and by the induction assumption
$\D$ satisfies (3), so we are done.

(3) $\Rightarrow$ (1). By Proposition~\ref{equiv} $\C_i$ is Morita
equivalent to a $G_i$-extension of $\C_{i-1},\, i=1,\dots, n$.
Combining induction with Lemma~\ref{mor} we see that $\C$ is 
Morita equivalent to a cyclically nilpotent fusion category, i.e., $\C$ is solvable.
\end{proof}

\begin{proposition}
\label{basicp2-bis}
\begin{enumerate}
\item[(i)] The class of solvable categories is closed
under taking extensions and equivariantizations by solvable
groups, Morita equivalent categories, tensor products, 
 center, subcategories and component categories of 
quotient categories. 
\item[(ii)]
The categories $Vec_{G,\omega}$ and $\Rep(G)$ are solvable
if and only if $G$ is a solvable group.
\item[(iii)]
A braided nilpotent fusion category is solvable.
\item[(iv)] 
A solvable fusion category $\C\ne \Vec$ contains a nontrivial
invertible object. 
\end{enumerate}
\end{proposition}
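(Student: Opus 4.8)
The plan is to run everything off the three equivalent descriptions of solvability in Proposition~\ref{basicp2}. For (i), Morita-invariance and closure under $\boxtimes$ are immediate from description (1), since the class of cyclically nilpotent categories is visibly closed under Deligne products and Morita equivalence is transitive; closure under the center then follows because $\Z(\C)$ is Morita equivalent to $\C\boxtimes\C^{op}$ and $\C^{op}$ is solvable whenever $\C$ is. For extensions and equivariantizations by a \emph{solvable} group $G$, I would fix a subnormal series $1=N_0\triangleleft\cdots\triangleleft N_k=G$ with cyclic prime quotients and refine the given $G$-extension (resp. $G$-equivariantization) into a tower of $\mathbb{Z}/p$-extensions (resp. $\mathbb{Z}/p$-equivariantizations), using $\bigoplus_{g\in N}\C_g$ for a normal $N\triangleleft G$ in the graded case and $(\C^N)^{G/N}\cong\C^G$ in the equivariant case, and then splice this tower onto a chain for $\C$ furnished by description (3). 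Closure under fusion subcategories and component categories of quotients I would treat exactly as in Proposition~\ref{basicp1-bis}, replacing ``nilpotent'' by ``cyclically nilpotent'' throughout; the one extra ingredient required is that a component category of a quotient of a cyclically nilpotent category is again cyclically nilpotent.

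For (iv) I would induct on the length $n$ of a chain $\Vec=\C_0,\dots,\C_n=\C$ as in Proposition~\ref{basicp2}(3) and inspect the last step. If $\C=(\C_{n-1})^{\mathbb{Z}/p}$ is a $\mathbb{Z}/p$-equivariantization, then $\C$ automatically contains the pointed subcategory $\Rep(\mathbb{Z}/p)=\Vec_{\mathbb{Z}/p}$ coming from the equivariant structures on the unit, which supplies $p-1>0$ nontrivial invertible objects. If $\C$ is a $\mathbb{Z}/p$-extension of $\C_{n-1}$, then either $\C_{n-1}=\Vec$, so $\C$ is pointed of dimension $p$ and we are done, or $\C_{n-1}\neq\Vec$ is solvable with a strictly shorter chain and hence, by induction, contains a nontrivial invertible object, which remains invertible in $\C$.

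Part (iii) is where I expect the real content to lie, and it is the step I would flag as the main obstacle. The plan is induction on $\FPdim(\C)$: if $\C\neq\Vec$ is nilpotent then $\C_{\rm ad}\subsetneq\C$, so the universal grading group $U_\C$ is nontrivial. The crucial input is that for a \emph{braided} category $U_\C$ is abelian, hence admits a quotient $\mathbb{Z}/p$; the associated faithful $\mathbb{Z}/p$-grading exhibits $\C$ as a $\mathbb{Z}/p$-extension of its trivial component $\C_0$, which is a fusion subcategory and therefore again braided and nilpotent, of strictly smaller Frobenius--Perron dimension. By induction $\C_0$ is solvable, and then $\C$ is solvable by part (i). (Equivalently, one could first invoke the decomposition of a braided nilpotent category as a Deligne product of braided categories of prime-power dimension, for which $U_\C$ is automatically a $p$-group.) I want to stress that braidedness is indispensable here: a general nilpotent category can have non-solvable universal grading group --- for instance $\Vec_G$ with $G$ a non-abelian simple group is nilpotent but, by part (ii), not solvable --- so the argument genuinely fails without a braiding.

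Finally, for (ii) the ``if'' directions are straightforward: given a subnormal series of the solvable group $G$ with cyclic prime quotients $N_i/N_{i-1}$, the identities $\Rep(N_{i-1})^{N_i/N_{i-1}}\cong\Rep(N_i)$ build $\Rep(G)$ from $\Vec$ by successive $\mathbb{Z}/p$-equivariantizations, while $\Vec_{G,\omega}$, being a $G$-extension of $\Vec$, is built by successive $\mathbb{Z}/p$-extensions; description (3) then gives solvability. For the converses I would induct on $|G|$. For $\Rep(G)$: if $G\neq1$ then by (iv) $\Rep(G)$ has a nontrivial invertible object, i.e. a nontrivial character, so $[G,G]\subsetneq G$; the restriction functor realizes $\Rep([G,G])$ as a quotient of $\Rep(G)$, hence solvable by (i), so $[G,G]$ is solvable by induction and $G$ is solvable since $G/[G,G]$ is abelian. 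For $\Vec_{G,\omega}$ I would reduce to the previous case: by closure under center and subcategories, $\Z(\Vec_{G,\omega})$ is solvable, and therefore so is its Tannakian subcategory $\Rep(G)$ --- the objects of the twisted double supported on the identity conjugacy class, which are untwisted for every $\omega$ --- whence $G$ is solvable.
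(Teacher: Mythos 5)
Your proposal is correct; parts (i) and (iv) follow the paper's own argument (the paper likewise reduces (i) to the fact that a component of a quotient of a cyclically nilpotent category is cyclically nilpotent --- exactly the ``extra ingredient'' you flag --- and proves (iv) by the same case analysis on the last step of the chain), but in (ii) and (iii) you take genuinely different routes. For the converse in (ii) for $\Rep(G)$, the paper invokes Proposition~\ref{basicp2}(3) directly: the last step of a chain for $\Rep(G)$ is either an extension, which forces $Z(G)\neq 1$ and lets one pass to the subcategory $\Rep(G/Z)$, or an equivariantization, which produces a prime-order fusion subcategory $\Rep(G/G_1)$ and lets one pass to the quotient $\Rep(G_1)$ with $[G:G_1]$ prime. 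Your argument via (iv) --- a nontrivial invertible object of $\Rep(G)$ is a nontrivial character, so $[G,G]\subsetneq G$, and one inducts on $\Rep([G,G])$, realized as a quotient under restriction --- is more uniform and avoids the case split; both versions lean equally on the closure properties in (i). (In the ``if'' direction you build $\Rep(G)$ by successive prime-cyclic equivariantizations, where the paper just notes $\Rep(G)$ is Morita equivalent to $\Vec_G$; this is immaterial.) For (iii), the paper gives a one-line citation: \cite[Theorem 6.12]{DGNO1} (a braided nilpotent category is a Deligne product of braided categories of prime-power dimension) combined with \cite[Theorem 8.28]{ENO} --- precisely the route you mention parenthetically. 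Your primary argument is self-contained and more elementary: braidedness makes the Grothendieck ring commutative, so the faithfulness of the universal grading forces $U_\C$ to be abelian; hence a nontrivial braided nilpotent $\C$ carries a faithful $\mathbb{Z}/p$-grading whose trivial component is again braided and nilpotent of smaller dimension, and one inducts. The price is that you must invoke the standard facts that fusion subcategories of nilpotent categories are nilpotent and that components of a faithful grading have equal dimension, but you avoid the external structure theorem entirely. Your closing caveat that braidedness is indispensable, with the counterexample $\Vec_G$ for a nonabelian simple $G$, is exactly the content of Remark~\ref{MW solvable}(1) in the paper.
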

\begin{proof}
(i) As in the proof of Proposition \ref{basicp1-bis},
everything follows from the easy fact that a component category in a
quotient of a cyclically nilpotent category is cyclically
nilpotent. 

(ii) One direction is obvious, since if $G$ is solvable,
$\Vec_{G,\omega}$ is cyclically nilpotent.  Since $\Rep(G)$
is Morita equivalent to $\Vec_G$ it is also solvable by (i).

To prove the converse implication it suffices to show that
if $\Rep(G)$ is solvable then so is $G$.  Indeed, $\Z(\Vec_{G,\omega})$
contains $\Rep(G)$ as a fusion subcategory, so the solvability of 
$\Vec_{G,\omega}$ implies solvability of $\Rep(G)$ by (i).
We have two possibilities: either $\Rep(G)$ is an $H$-extension
or $\Rep(G) =\C^H$ for some fusion category $\C$, 
where $H$ is a cyclic group of prime order.
In the former situation $G$ must have a non-trivial center $Z$
and we can pass to the fusion subcategory $\Rep(G/Z)\subset \Rep(G)$
which is again solvable by (i). In the latter situation $\Rep(G)$
contains a fusion subcategory of prime order by Proposition~\ref{eqext1}(i),
therefore, $G$ contains a normal subgroup $G_1$ of prime index
and we can pass to the solvable  quotient category $\Rep(G_1)$.
So the required statement follows by induction.


(iii) Follows from \cite[Theorem 6.12]{DGNO1} combined with \cite[Theorem 8.28]{ENO}.   

(iv) The proof is by induction in the dimension of $\C$. 
The base of induction is clear, and only the induction step needs to be
justified. If $\C$ is an extension of a smaller solvable category $\D$, then 
either $\D\ne \Vec$ and the statement follows from the induction
assumption, or $\D=\Vec$ and $\C$ is pointed, so the statement
is obvious. On the other hand, 
if $\C$ is a $\Bbb Z/p$-equivariantization of a smaller
solvable category $\D$, then $\Rep(\Bbb Z/p)$ sits inside 
$\C$, so we are done.   
\end{proof}

\begin{remark} 
\label{MW solvable}
\begin{enumerate}
\item[(1)] Note that a non-braided nilpotent fusion category need not be solvable
(e.g., $\Vec_G$ for a non-solvable group $G$).
\item[(2)] The notion of a solvable fusion category is close in
spirit to the notions of upper and lower solvable 
and semisolvable Hopf algebras introduced by 
Montgomery and Witherspoon \cite{MW}. However, 
we would like to note that a semisimple Hopf 
algebra $H$ such that $\Rep(H)$ is  
solvable in our sense is not necessarily 
upper or lower semisolvable in the sense of \cite{MW}. 
For example, Galindo and Natale constructed in \cite{GN}
self-dual Hopf algebras without nontrivial normal Hopf subalgebras 
as twisting deformations of solvable groups. 
Clearly, the representation category of any such Hopf algebra is solvable.
It is also easy to construct an example of an upper and lower
solvable semisimple Hopf
algebra $H$, such that $\Rep(H)$ is not solvable.
For this, it suffices to take the Kac algebra associated to the
exact factorization of groups $A_5=A_4\cdot \Bbb Z/5\Bbb Z$.
\end{enumerate}
\end{remark}

\section{Module categories over equivariantized  categories}
\label{Sect 5}

Let $\C$ be a fusion category and let $G$ be a finite group
acting on $\C$. In this Section we obtain a description of module
categories over the equivariantization $\C^G$.

Let $\M$ be a $\C$-module category, and let $t$ be a tensor
autoequivalence of $\C$.  Define a twisted $\C$-module category
$\M^t$ by setting $\M^t =\M$ as an abelian category and defining
a new action of $\C$:
\[
X \ot^t M :=  t(X) \ot M,
\]
for all objects $M$ in $\M$ and $X$ in $\C$, cf.\ \cite{Nk2}.
Given  a  $\C$-module functor $F: \M \to \N$,  we define a $\C$-module functor
$F^t: \M^t \to \N^t$ in an obvious way. Given a natural transformation $\nu: F\to G$ 
between  $\C$-module functors $F,\, G: \M\to \N$  we define a natural
transformation $\nu^t: F^t \to G^t$.

\begin{remark}
If $A$ is an algebra in $\C$ such that $\M$ is equivalent to the category of 
$A$-modules in $\C$ then $\M^t$ is equivalent to the category of
$t(A)$-modules in $\C$.
\end{remark}

An action of a group $G$ on $\C$ gives rise to $\C$-module 
equivalences 
\[
\Gamma_{g,h}:  (\M^h)^g \cong \M^{gh},\, g, h\in G.
\]

\begin{definition}
A {\em $G$-equivariant $\C$-module category} is a pair consisting
of a $\C$-module category $\M$ along with $\C$-module
equivalences $U_g : \M^g \xrightarrow{\sim} \M,\, g\in G,$ and
natural isomorphisms of tensor functors $\mu_{g,h} : U_{gh}\, \Gamma_{g, h}
\xrightarrow{\sim} U_g\, (U_h)^g,\, g,h\in G,$
satisfying the following  compatibility conditions:
\begin{equation}
\label{compcond}
 (\mu _{f,g} \, (U_h)^{fg} ) \circ( \mu_{fg,h}\, \Gamma_{f,g} )= 
  (U_f \,(\mu_{g,h})^f) \circ (\mu_{f,gh}\, \Gamma_{g,h}),
  \qquad f,\,g,\, h\in G.
\end{equation}
\end{definition}

\begin{definition}
Let $\M$ be a $G$-equivariant $\C$-module category. An {\em equivariant object}
in $\M$ is a pair consisting of an object $M$ of $\M$ along with isomorphisms
$v_g : U_g(M) \xrightarrow{\sim} M,\, g\in G,$ such that the diagrams 
\begin{equation*}
\label{equivariantX}
\xymatrix{U_g(U_h(M))\ar[rr]^{U_g(v_h)} \ar[d]_{\mu_{g,h}(M)
}&&U_g(M)\ar[d]^{v_g}\\ U_{gh}(M)\ar[rr]^{v_{gh}}&&M}
\end{equation*}
commute for all $g,h\in G$.
\end{definition}

Let $H$ be a subgroup of $G$ and let $\M$ be an $H$-equivariant
$\C$-module category.  Let $\M^H$ denote the category of
equivariant objects in $\M$.  Then $\M^H$ is a $\C^G$-module
category. Namely, the equivariant structure on $X\ot M$, where
$X$ is an object of $\C^G$ and $M$ is an object of $\M^H$, is
given by the product of equivariant structures of $X$ and $M$.

\begin{proposition}
\label{CG-modules}
Every indecomposable $\C^G$-module category is equivalent to 
one of the form $\M^H$, where $H$ is a subgroup of $G$, and $\M$
is an $H$-equivariant indecomposable $\C$-module category.
\end{proposition}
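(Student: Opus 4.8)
The plan is to realize $\C^G$ as a dual category to a crossed product and then transport the known classification of module categories across a Morita equivalence. By Proposition~\ref{equiv} we have $\C^G \cong (\C\rtimes G)^{*op}_\M$, where $\M = \C$ as a module category over $\C\rtimes G$. Since $\C\rtimes G$ is $G$-graded with trivial component $\C$, I would exploit the general principle that module categories over a dual category $\B^*_\M$ correspond to module categories over $\B$ itself (they share the same $2$-category of module categories). Thus classifying indecomposable $\C^G$-module categories is equivalent to classifying indecomposable $(\C\rtimes G)$-module categories, and I would carry out the latter.

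First I would use that $\C\rtimes G$ is faithfully graded by $G$ with trivial component $\C$. For a $G$-graded fusion category $\B = \bigoplus_{g\in G}\B_g$ with $\B_e = \C$, an indecomposable module category $\N$ is built from module data over the trivial component together with a compatible $G$-action structure. Concretely, restricting the $\B$-action to $\C = \B_e$ decomposes $\N$ into $\C$-submodule categories permuted by the grading; the stabilizer of a component is a subgroup $H\le G$, and that component is an $H$-equivariant $\C$-module category in exactly the sense of the Definition preceding this Proposition (the equivalences $U_g$ and the coherence data $\mu_{g,h}$ encode how $\B_g$ acts, matching the compatibility condition \eqref{compcond}). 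The full module category is then induced from $H$ to $G$, so it is determined by the pair $(H,\M)$ with $\M$ an $H$-equivariant indecomposable $\C$-module category. Translating back through the Morita equivalence, the resulting $\C^G$-module category is precisely $\M^H$, the category of equivariant objects, since taking $H$-equivariant objects is the operation dual to inducing along the grading.

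I would verify conversely that each $\M^H$ is indecomposable as a $\C^G$-module category (indecomposability of $\M$ over $\C$ together with transitivity of the $G$-action on the induced components forces this), and that distinct data give inequivalent module categories, so the list is exhaustive and non-redundant. The main obstacle I anticipate is making the correspondence between $(\C\rtimes G)$-module categories and the equivariant-object description fully precise: one must check that the abstract module-category data over the crossed product assembles exactly into the equivariant structure $(U_g,\mu_{g,h})$ satisfying \eqref{compcond}, and that the functor $\N \mapsto \N^H$ respects indecomposability and equivalence in both directions. This bookkeeping — tracking how the grading shifts the module action and identifying the resulting coherence isomorphisms with $\mu_{g,h}$ — is the delicate part; the Morita reduction via Proposition~\ref{equiv} is what makes it tractable, since it lets me work with a genuinely graded category rather than with $\C^G$ directly.
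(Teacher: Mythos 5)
Your proposal follows essentially the same route as the paper: both pass to the crossed product $\C\rtimes G$ via Proposition~\ref{equiv}, classify indecomposable $(\C\rtimes G)$-module categories through their decomposition into $\C$-module components permuted transitively by $G$ (with stabilizer $H$ and component an $H$-equivariant $\C$-module category, as in \cite{Nk2}), and then transport back across the Morita equivalence, identifying the corresponding $\C^G$-module category with the category of $H$-equivariant objects. The only cosmetic difference is that the paper phrases the last step as computing $(\C\rtimes G)$-module functors from $\C$ to $\N$, while you invoke the general correspondence of module categories under duality; these are the same mechanism.
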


\begin{proof}
Consider the crossed product category $\C \rtimes G$
(see Subsection \ref{cro}). Indecomposable $(\C \rtimes G)$-module categories
were studied in \cite{Nk2}. Every such module category $\N$
decomposes into a direct sum of $\C$-module categories $\N
=\oplus_{s\in S}\, \N_s$, where $S$ is a homogeneous $G$-set.
Let $H$ be the stabilizer of $s\in S$ so that $S\cong G/H$. It
follows that $\M:= \N_s$ is an $H$-equivariant $\C$-module
category which completely determines $\N$.  By Proposition
\ref{equiv}, any indecomposable $\C^G$-module
category is equivalent to the category of $(\C \rtimes G)$-module
functors from $\C$ to $\N$ for some $\N$ as above. 
It is easy to see that such functors
correspond to equivariant objects in $\M$.
\end{proof}

\begin{example}
Let $\C=\Vec$.  We have $\Vec\rtimes G = \Vec_G$ and $\Vec^G =\Rep(G)$.
An $H$-equivariant $\Vec$-module category is nothing but a $2$-cocycle
$\mu \in Z^2(H, \Bbb C^*)$.  An equivariant object in this category is the 
same thing as a projective representation of $H$ with the Schur
multiplier $\mu$. Thus, our description agrees with that of \cite{O2}.
\end{example}

\section{Proof of Theorem \ref{Th1}}
\label{Sect 6}

Let $\C$ be a fusion category, and let $\M$ be an indecomposable 
module category over $\C$. We will denote the Frobenius-Perron 
dimension of $X\in \M$ normalized as in Definition~\ref{strong Frobenius}
by  $\FPdim_\M(X)$.  

\begin{definition} Let $m\ne 0$ be an algebraic integer. 
Let us say that a fusion category $\C$ has the {\em strong
$m$-Frobenius property} if for any
indecomposable $\C$-module category $\M$ and any simple
$X\in \M$, the ratio $\frac{\FPdim(\C)}{m\FPdim_\M(X)}$
is an algebraic integer. 
\end{definition}

\begin{proposition}\label{extequi}
Let $\C$ be a fusion category having the strong $m$-Frobenius property, 
and let $G$ be a finite group. Then 
\begin{enumerate}
\item[(1)] A $G$-equivariantization of $\C$ has the  strong
$m$-Frobenius property, and 
\item[(2)] A $G$-extension of $\C$ has the  strong $m\sqrt{|G|}$-
Frobenius property.
\end{enumerate}
\end{proposition}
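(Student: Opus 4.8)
The plan is to prove the two parts using the structural descriptions of module categories over extensions and equivariantizations established earlier, combined with the divisibility theorems. The key observation is that the strong $m$-Frobenius property is a statement about \emph{all} indecomposable module categories, so the main work is to reduce module categories over $\C^G$ (respectively over a $G$-extension) to module categories over $\C$ itself, and then track how Frobenius-Perron dimensions transform under this reduction.

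For part (1), I would invoke Proposition~\ref{CG-modules}: every indecomposable $\C^G$-module category $\N$ has the form $\M^H$, where $H\subseteq G$ is a subgroup and $\M$ is an $H$-equivariant indecomposable $\C$-module category. A simple object $\tilde X$ of $\M^H$ is an equivariant object whose underlying object in $\M$ decomposes as a sum over an $H$-orbit of simples $X\in\M$ (twisted by a projective representation of the relevant stabilizer, as in the Example following Proposition~\ref{CG-modules}). The first step is therefore to express $\FPdim_{\N}(\tilde X)$ in terms of $\FPdim_\M(X)$, keeping careful track of the normalization, using $\FPdim(\C^G)=|G|\FPdim(\C)$ and the fact that $\FPdim(\N)$ must be normalized to equal $\FPdim(\C^G)$. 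Once this dimension bookkeeping is done, the ratio $\frac{\FPdim(\C^G)}{m\FPdim_{\N}(\tilde X)}$ should be expressible as an algebraic-integer multiple of $\frac{\FPdim(\C)}{m\FPdim_\M(X)}$, which is an algebraic integer by the strong $m$-Frobenius property of $\C$ applied to the indecomposable $\C$-module category $\M$. The orbit-size and stabilizer factors are integers, so they do no harm.

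For part (2), let $\C'$ be a $G$-extension of $\C$, so $\FPdim(\C')=|G|\FPdim(\C)$ and the target exponent becomes $m\sqrt{|G|}$. Here I would use that an indecomposable module category $\M$ over the extension $\C'$ restricts to a (possibly decomposable) module category over the trivial component $\C$, with the restriction governed by the grading: simple objects of $\M$ are organized by the $G$-grading so that tensoring by the homogeneous components permutes the simples, and the restricted module category $\M|_\C$ is a direct sum of indecomposable $\C$-module categories permuted transitively by $G$. The step is to compare $\FPdim_\M(X)$ (normalized so $\FPdim(\M)=\FPdim(\C')$) against the normalized dimension of $X$ as an object of one indecomposable summand $\M_0$ of $\M|_\C$ (normalized so $\FPdim(\M_0)=\FPdim(\C)$); this comparison will introduce precisely a factor of $\sqrt{|G|}$ coming from the ratio $\sqrt{\FPdim(\C')/\FPdim(\C)}$, which accounts for the extra $\sqrt{|G|}$ in the statement. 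Applying the strong $m$-Frobenius property of $\C$ to $\M_0$ then yields the conclusion.

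The main obstacle I anticipate is the dimension bookkeeping in both parts — specifically, pinning down the exact relationship between the two normalizations of Frobenius-Perron dimension (one with respect to the big category $\C^G$ or $\C'$, the other with respect to $\C$) and verifying that all the intervening factors (orbit sizes, stabilizer orders, the dimensions of projective representations appearing in equivariant simple objects) are genuinely algebraic integers rather than merely rational numbers. In part (2) the appearance of the \emph{square root} $\sqrt{|G|}$ is the delicate point: I would need the fact (from the theory of weakly integral categories, \cite[Proposition 8.27]{ENO}, cf.\ the divisibility Theorem~\ref{divis}) that Frobenius-Perron dimensions in a graded category scale by square roots of the grading group order, and I expect the careful justification that the resulting ratio is an algebraic integer — not just that the naive dimension count works out — to be where the real care is required.
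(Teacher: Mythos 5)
Your strategy is the same as the paper's: part (1) via Proposition~\ref{CG-modules}, part (2) via the decomposition of an indecomposable module category over a graded category into $\C$-module components permuted transitively by $G$ (from \cite{GNk}), reducing in both cases to the strong $m$-Frobenius property of $\C$ applied to an indecomposable $\C$-module piece. However, two of your arithmetic claims are wrong or incomplete as stated, and they sit exactly at the points you yourself flagged as delicate.

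In part (2), the normalization factor is not ``precisely $\sqrt{|G|}$''. Writing $\D$ for the $G$-extension and $\M|_\C=\bigoplus_{s\in G/H}\M_s$ with all components of equal dimension, each component has dimension $|H|\FPdim(\C)$ in the normalization $\FPdim(\M)=\FPdim(\D)=|G|\FPdim(\C)$, so $\FPdim_\M(X)=\sqrt{|H|}\,\FPdim_{\M_s}(X)$; your claimed factor $\sqrt{|G|}$ occurs only in the special case $H=G$. The argument still closes, because
\begin{equation*}
\frac{\FPdim(\D)}{m\sqrt{|G|}\,\FPdim_\M(X)}=\sqrt{[G:H]}\cdot\frac{\FPdim(\C)}{m\,\FPdim_{\M_s}(X)}
\end{equation*}
and $\sqrt{[G:H]}$ is an algebraic integer, but that last observation is the step your accounting is missing. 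Similarly, in part (1) the justification ``the orbit-size and stabilizer factors are integers, so they do no harm'' is insufficient, because the degree $\deg(\pi)$ of the irreducible projective representation attached to a simple $\tilde X\in\M^H$ appears in the \emph{denominator}: if $X\in\M$ is a simple constituent of $\tilde X$ and $\mathrm{Stab}(X)\subseteq H$ is its stabilizer, one finds
\begin{equation*}
\frac{\FPdim(\C^G)}{\FPdim_{\M^H}(\tilde X)}=\frac{\sqrt{[G:H]}\,|\mathrm{Stab}(X)|}{\deg(\pi)}\cdot\frac{\FPdim(\C)}{\FPdim_\M(X)},
\end{equation*}
so you need the classical Frobenius--Schur divisibility theorem that the degree of an irreducible projective representation of a finite group divides the order of the group; integrality of orbit sizes and stabilizer orders alone does not yield this. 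With these two repairs your proposal coincides with the paper's proof.
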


\begin{proof}
(1)  By Proposition~\ref{CG-modules}
every indecomposable $\C^G$-module category $\M$ 
is equivalent to $\N^H$, where $H$ is a subgroup of $G$ and $\N$ is
an $H$-equivariant indecomposable $\C$-module category.
For any simple object $X$ in $\M=\N^H$
choose and fix its simple constituent $Y$ in $\N$.
Let $\mbox{Stab}(Y)$ denote the stabilizer of $Y$ in $H$.  Then $X$ corresponds to an
irreducible representation $\pi$ of $\mbox{Stab}(Y)$ and 
\begin{equation}
\label{X vs Y}
\FPdim_\N (X) =  \deg(\pi) [H : \mbox{Stab}(Y)] \FPdim_\N(Y). 
\end{equation}
We claim that 
\begin{equation}
\label{Pasha asked to explain}
\frac{\FPdim_\M(X)} {\FPdim_\N(X)} =\sqrt{[G:H]}.
\end{equation}
Indeed, we have
\begin{eqnarray*}
\sum_{X}\, \FPdim_\N(X)^2
&=& \sum_Y\, \left( \sum_\pi\, \deg(\pi)^2 \right) [H : \mbox{Stab}(Y)]^2 \FPdim_\N(Y)^2 \\
&=& |H| \sum_Y\,  [H : \mbox{Stab}(Y)] \FPdim_\N(Y)^2 \\
&=& |H| \FPdim(\C).
\end{eqnarray*}
On the other hand, $\sum_{X}\, \FPdim_\M(X)^2 = \FPdim(\C^G) =|G|\FPdim(\C)$.
Combining these two equations we  obtain \eqref{Pasha asked to explain}.
Comparing with \eqref{X vs Y} we see that
\[
\frac{\FPdim(\C^G)}{\FPdim_\M(X)} 
= \frac{\sqrt{[G:H]}\, |\mbox{Stab}(Y)|}{\deg(\pi)} 
\times  \frac{\FPdim(\C)}{\FPdim_\N (Y)},
\]
and so $\C^G$ has the strong $m$-Frobenius property.

(2)  Let $\D =\oplus_{g\in G}\, \D_g,\, \D_e =\C,$ 
be a $G$-extension of $\C$, and let
$\M$ be an indecomposable $\D$-module category. 
Let $\M =\oplus_{s\in S}\, \M_s$
be its decomposition as a $\C$-module category.  
It was shown in \cite{GNk} that $S$
is a homogeneous $G$-set and $\FPdim(\M_s)/\FPdim(\M_t) =1$ 
for all $s,t\in S$. 
Let $H\subset G$ be a subgroup such that  $S =G/H$. Then 
for any simple object $X $ in $\M_s$
we have   $\tfrac{\FPdim_\M(X)}{\FPdim_{\M_s}(X)} =\sqrt{|H|}$, and therefore,
\[
\frac{\FPdim(\D)}{\FPdim_\M(X)}  
= \frac{|G|}{\sqrt{|H|}} 
\times \frac{\FPdim(\C)}{\FPdim_{\M_s}(X) } ,
\]
and so $\D$ has the strong $m\sqrt{|G|}$-Frobenius property.
\end{proof}

\begin{lemma}
\label{G-crossed}
Let  $\D$ be a non-degenerate braided fusion category containing a Tannakian
subcategory $\E=\Rep(G)$. Let $\Z$ be the de-equivariantization of $\E'$ by $\E$.
Then $\D$ is equivalent, as a fusion category, to a $G$-equivariantization
of a $G$-extension of~$\Z$. 
\end{lemma}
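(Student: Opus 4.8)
The plan is to exhibit $\D$ as built up in two stages from $\Z$: first a $G$-extension, then a $G$-equivariantization. The key structural input is Lemma~\ref{G-crossed}'s hypothesis that $\D$ is non-degenerate and contains the Tannakian subcategory $\E=\Rep(G)$. First I would apply M\"uger's theorem (Theorem~\ref{MuTh}) or the centralizer machinery to understand the relationship between $\E$, its centralizer $\E'$, and all of $\D$. Since $\E=\Rep(G)$ is Tannakian it embeds in $\E'$ (indeed $\E\subseteq \E'$ because $\Rep(G)$ is symmetric), and by the non-degeneracy of $\D$ we have $\FPdim(\E)\FPdim(\E')=\FPdim(\D)$, so $\FPdim(\E')=\FPdim(\D)/|G|$.

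Next I would use the de-equivariantization. By Remark~\ref{modularization}, the de-equivariantization $\Z=\E'_G$ of $\E'$ by $\E$ is a braided fusion category with $\FPdim(\Z)=\FPdim(\E')/|G|=\FPdim(\D)/|G|^2$. The plan is then to recognize that passing from $\Z$ back up to $\D$ factors naturally through an intermediate $G$-graded category. Concretely, the embedding $\E\subset \D$ as a subcategory mapping appropriately under the forgetful data should, via Proposition~\ref{eqext2}(i) or Proposition~\ref{eqext1}(i) applied to the center, let me realize $\D$ as a $G$-equivariantization $\B^G$ of some fusion category $\B$. I would identify $\B$ as the de-equivariantization of $\D$ by the copy of $\Rep(G)$ sitting inside $\D$ via the forgetful functor, so that $(\B)^G\cong\D$ by the canonical equivalence $(\C_G)^G\cong\C$ from Section~\ref{Sect 2.6}.

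The remaining task is to show that this $\B$ is itself a $G$-extension of $\Z$. Here I would exploit that $\B$ carries a residual $G$-grading coming from the grading structure on the de-equivariantization: de-equivariantizing $\D$ by $\E=\Rep(G)$ produces a category on which the dual group structure acts, and the component of $\B$ over the identity should be exactly the further de-equivariantization corresponding to $\E'$, namely $\Z$. The faithfulness of the grading and the computation $\FPdim(\B)=\FPdim(\D)/|G|=|G|\,\FPdim(\Z)$ together with $\FPdim(\B_e)=\FPdim(\Z)$ would confirm that $\B$ is a faithful $G$-extension of $\Z$. Throughout I would lean on the dimension bookkeeping $\FPdim(\C^G)=|G|\FPdim(\C)$ and $\FPdim(\C_G)=\FPdim(\C)/|G|$ to pin down the components.

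The hard part will be producing the correct $G$-grading on $\B$ and verifying that its trivial component is genuinely $\Z$ rather than merely a category of the right dimension. The subtlety is that de-equivariantization by the Tannakian $\E$ and the subsequent grading interact: one must check that the braiding data on $\E'$, after de-equivariantizing by $\E$, descends compatibly so that the grading group is $G$ and the neutral component is the non-degenerate category $\Z=\E'_G$. I expect to invoke Proposition~\ref{eqext1}(ii) applied inside $\Z(\B)$ — relating $G$-extensions to Tannakian subcategories whose de-equivariantization recovers the center of the trivial component — to make this identification rigorous, and this verification of the grading's trivial component is where the real work lies.
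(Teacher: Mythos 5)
Your skeleton is the same as the paper's: the paper's (one-paragraph) proof also sets $\B=\D_G$, the category of $\Fun(G)$-modules in $\D$, asserts that $\B$ is faithfully $G$-graded with trivial component $\Z$ and that $\D\cong\B^G$. So your decomposition into ``de-equivariantize $\D$ by $\E$, then exhibit the result as a $G$-extension of $\Z$'' is exactly the intended route; your first step (realizing $\D\cong\B^G$ via the braided embedding $\E\subset\D\hookrightarrow\Z(\D)$, Proposition~\ref{eqext2}(i), and the canonical equivalence $(\C_G)^G\cong\C$ of Section~\ref{Sect 2.6}) is sound, as is all of your dimension bookkeeping.

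The gap is in the step you yourself flag as the hard part, and the tool you propose cannot close it. Proposition~\ref{eqext1}(ii) has the wrong logical direction: its hypothesis is that a $G$-extension is \emph{already given}, so it cannot be used to produce the grading on $\B$; and even granting the grading, its conclusion only identifies the \emph{center} of the trivial component, i.e.\ it would yield $\Z(\B_e)\cong\Z(\Z)$ and hence, via Theorem~\ref{Morita eq iff}, only that $\B_e$ is Morita equivalent to $\Z$ --- strictly weaker than the equivalence $\B_e\cong\Z$ the lemma requires (and Morita equivalence of the trivial component is not enough, since the lemma asserts an equivalence of fusion categories after equivariantization). Proposition~\ref{eqext1}(i), which does produce gradings, requires a Tannakian subcategory of $\Z(\B)$ mapping to $\Vec$ in $\B$, and you never construct one; knowing $\FPdim(\B)=|G|\,\FPdim(\Z)$ cannot by itself identify which category the neutral component is. What actually closes the argument --- and what the paper's proof simply cites \cite{K, M3} for --- is the theorem of Kirillov~Jr.\ and M\"uger that the de-equivariantization of a braided category by a Tannakian subcategory $\Rep(G)$ is a braided $G$-crossed category: it carries a canonical faithful $G$-grading whose neutral component is the de-equivariantization of the M\"uger centralizer $\E'$ (here $\Z$), together with a compatible $G$-action whose equivariantization recovers $\D$. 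This is genuinely external input; it does not follow formally from Propositions~\ref{eqext1} and~\ref{eqext2} plus dimension counting.
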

\begin{proof}
Let $A=\mbox{Fun}(G)$ be the algebra of functions on $G$. The category $\D_G$ 
of $A$-modules in $\C$ is faithfully $G$-graded with the trivial component $\Z$,
and $\D$ is a $G$-equivariantization of  $\D_G$, see \cite{K, M3}.
\end{proof}

\begin{corollary}\label{exttfrob}
Let $\C$ be a weakly group-theoretical fusion category. Then its
center $\Z(\C)$ has the strong $\sqrt{\FPdim(\C)}$-Frobenius property. 
\end{corollary}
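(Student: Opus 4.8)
The plan is to combine the characterization of weakly group-theoretical categories from Proposition~\ref{basicp1} with the inductive accounting of the strong $m$-Frobenius property provided by Proposition~\ref{extequi}, using Lemma~\ref{G-crossed} as the bridge between the de-equivariantization language of the chain and the extension/equivariantization language that Proposition~\ref{extequi} understands. By Proposition~\ref{basicp1}, since $\C$ is weakly group-theoretical, there is a sequence of non-degenerate braided categories $\Vec=\D_0,\D_1,\dots,\D_n=\Z(\C)$ together with finite groups $G_1,\dots,G_n$ such that $\Rep(G_i)\subset \D_i$ is isotropic (Tannakian) and $\D_{i-1}$ is the de-equivariantization of $\Rep(G_i)'$ by $\Rep(G_i)$. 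The base case $\D_0=\Vec$ trivially has the strong $1$-Frobenius property.

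First I would run an induction on $i$ along this chain, with the inductive hypothesis that $\D_{i-1}$ has the strong $m_{i-1}$-Frobenius property for a suitable algebraic integer $m_{i-1}$. The key step is to apply Lemma~\ref{G-crossed} to $\D_i$: since $\D_i$ is non-degenerate and contains the Tannakian subcategory $\Rep(G_i)$ whose de-equivariantization gives $\D_{i-1}$, the lemma says $\D_i$ is a $G_i$-equivariantization of a $G_i$-extension of $\D_{i-1}$. Now I apply Proposition~\ref{extequi} twice in sequence: part~(2) tells me the $G_i$-extension of $\D_{i-1}$ has the strong $m_{i-1}\sqrt{|G_i|}$-Frobenius property, and part~(1) tells me that passing to the $G_i$-equivariantization preserves that same constant. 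Hence $\D_i$ has the strong $m_{i-1}\sqrt{|G_i|}$-Frobenius property, so I may set $m_i = m_{i-1}\sqrt{|G_i|}$ and continue. At the end of the induction, $\Z(\C)=\D_n$ has the strong $m_n$-Frobenius property with $m_n = \prod_{i=1}^n \sqrt{|G_i|} = \sqrt{\prod_{i=1}^n |G_i|}$.

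The remaining point is to identify $m_n$ as $\sqrt{\FPdim(\C)}$. Here I would use the Frobenius-Perron dimension bookkeeping along the chain: each de-equivariantization by $\Rep(G_i)$ divides dimension by $|G_i|$ (as recorded in Section~\ref{Sect 2.6}), so that $\FPdim(\D_{i-1})=\FPdim(\D_i)/|G_i|$, whence $\FPdim(\Z(\C))=\FPdim(\D_n)=\prod_{i=1}^n |G_i|$ (since $\FPdim(\D_0)=1$). Because $\Z(\C)$ is the center of $\C$ one has $\FPdim(\Z(\C))=\FPdim(\C)^2$, so $\prod_{i=1}^n|G_i|=\FPdim(\C)^2$ and therefore $m_n=\sqrt{\prod_i|G_i|}=\FPdim(\C)$. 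This gives the claimed constant $m=\sqrt{\FPdim(\C)}$; wait—I should be careful here: $m_n=\sqrt{\prod_i |G_i|}=\sqrt{\FPdim(\C)^2}=\FPdim(\C)$, whereas the target is $\sqrt{\FPdim(\C)}$, so the dimension count must instead be run against $\FPdim(\D_n)=\FPdim(\C)$ rather than $\FPdim(\C)^2$, i.e.\ I would need to track that the chain realizing $\Z(\C)$ has total $|G_i|$-product equal to $\FPdim(\C)$, not its square.

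The main obstacle I expect is precisely this final normalization: reconciling the product $\prod_i|G_i|$ coming from the chain with the correct power of $\FPdim(\C)$, and confirming that the constant delivered by iterating Proposition~\ref{extequi}(2) aggregates exactly to $\sqrt{\FPdim(\C)}$ rather than to $\FPdim(\C)$. This requires pinning down the dimension relation $\FPdim(\D_n)=\FPdim(\C)$ (the center has dimension $\FPdim(\C)^2$, but the relevant quantity for the constant is the $\sqrt{}$ of the product of group orders appearing in the chain, which equals $\sqrt{\FPdim(\Z(\C))}=\FPdim(\C)$ only if one accounts correctly for how the isotropic—as opposed to Lagrangian—de-equivariantizations reduce dimension). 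I would resolve this by verifying the base-to-top dimension telescope explicitly and checking that the accumulated Frobenius constant $m_n$ matches $\sqrt{\FPdim(\Z(\C))}=\sqrt{\FPdim(\C)^2}$; the stated target $\sqrt{\FPdim(\C)}$ then follows once the bookkeeping is carried out against the intended normalization of Definition~\ref{strong Frobenius} as applied to $\Z(\C)$.
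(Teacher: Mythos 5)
Your skeleton is exactly the paper's proof: Proposition~\ref{basicp1} produces the chain $\Vec=\D_0,\dots,\D_n=\Z(\C)$, Lemma~\ref{G-crossed} converts each step into a $G_i$-extension followed by a $G_i$-equivariantization, and Proposition~\ref{extequi} accumulates the constant $m_n=\prod_{i=1}^n\sqrt{|G_i|}$. The genuine gap is your dimension telescope, which is wrong and which you never repair. You write $\FPdim(\D_{i-1})=\FPdim(\D_i)/|G_i|$; but in the chain of Proposition~\ref{basicp1}, $\D_{i-1}$ is the de-equivariantization of the M\"uger centralizer $\Rep(G_i)'$, not of $\D_i$ itself. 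Since $\D_i$ is non-degenerate, $\FPdim(\Rep(G_i)')=\FPdim(\D_i)/|G_i|$, and de-equivariantizing divides by $|G_i|$ once more, so the correct relation is $\FPdim(\D_{i-1})=\FPdim(\D_i)/|G_i|^2$. Equivalently: each step of your own induction consists of \emph{two} operations, each multiplying the dimension by $|G_i|$ (extension, then equivariantization), while the constant picks up only one factor $\sqrt{|G_i|}$, because Proposition~\ref{extequi}(1) leaves it unchanged. Telescoping correctly gives $\prod_{i=1}^n|G_i|^2=\FPdim(\Z(\C))=\FPdim(\C)^2$, hence $\prod_{i=1}^n|G_i|=\FPdim(\C)$ and $m_n=\sqrt{\FPdim(\C)}$, which is precisely the claimed constant; there is no leftover normalization issue at all.

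Your proposed repair should be discarded: it is false that $\FPdim(\D_n)=\FPdim(\C)$ (one has $\FPdim(\D_n)=\FPdim(\Z(\C))=\FPdim(\C)^2$), and the constant $\FPdim(\C)$ produced by your erroneous telescope cannot be ``renormalized'' into the target, because the strong $\FPdim(\C)$-Frobenius property for $\Z(\C)$ is a strictly stronger assertion than the strong $\sqrt{\FPdim(\C)}$-Frobenius property and is in fact false in general. For instance, take $\C=\Rep(S_3)$ and $\M=\Rep(S_3)$ viewed as an indecomposable $\Z(\C)$-module category: the normalization $\FPdim(\M)=\FPdim(\Z(\C))=36$ of Definition~\ref{strong Frobenius} gives $\FPdim_\M(X)=2\sqrt{6}$ for the $2$-dimensional irreducible $X$, and then
\[
\frac{\FPdim(\Z(\C))}{\FPdim(\C)\,\FPdim_\M(X)}=\frac{36}{6\cdot 2\sqrt{6}}=\frac{\sqrt{6}}{2}
\]
is not an algebraic integer, whereas dividing by $\sqrt{\FPdim(\C)}=\sqrt{6}$ instead gives $3$. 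So the factor of $|G_i|$ you dropped at each step is exactly the difference between a true statement and a false one; once it is restored, your induction closes and coincides with the paper's argument.
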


\begin{proof}
It follows from Proposition~\ref{basicp1} and   Lemma~\ref{G-crossed}  that there exists a
sequence of finite groups $G_1,\dots,G_n$ such that $\Z(\C)$ can be
obtained from $\Vec$ by the following sequence of $2n$ operations: 
$G_1$-extension, $G_1$-equivariantization,...,$G_n$-extension,
$G_n$-equivariantiza\-tion. Therefore, the result follows from 
Proposition \ref{extequi}.  
\end{proof} 

Now we are in a position to prove Theorem \ref{Th1}. 
Let $\C$ be a weakly group-theoretical fusion category, and let $\M$ be 
an indecomposable module category over $\C$. Let $\widetilde{\M}$ be the 
pullback of $\M$ under the forgetful functor $\Z(\C)\to \C$. 
Then it is obvious that for any $X\in \M$, one has 
$\FPdim_{\widetilde{\M}}(X)=\FPdim_{\M}(X)\sqrt{\FPdim(\C)}$. 
On the other hand, Corollary \ref{exttfrob} implies that 
$\FPdim(\C)^2/\FPdim_{\widetilde{\M}}(X)$ is an algebraic integer divisible by
$\sqrt{\FPdim(\C)}$. This implies that $\FPdim(\C)/\FPdim_\M(X)$ 
is an algebraic integer, i.e., $\C$ has the strong Frobenius
property.

\section{Nondegenerate and slightly degenerate categories
with a simple object of prime power dimension}
\label{Sect 7}

In this section we will prove several results on 
non-degenerate and slightly degenerate braided categories containing a simple object
of prime power dimension. These results will be of central
importance for the proof of Theorem \ref{Th2} and further results
of the paper, and are parallel to the character-theoretic lemmas
used in the classical proof of Burnside's theorem in
group theory. 

\begin{lemma}\label{key} Let $X$ and $Y$ be two simple objects of
an integral braided category 
with coprime dimensions\footnote{Here and below, we use shortened
notation $d_X$ for the Frobenius-Perron dimension $\FPdim(X)$.} $d_X,d_Y$. Then one of two possibilities hold:
\begin{enumerate}
\item[(i)] $X$ and $Y$ projectively centralize each other (i.e. the
square of the braiding on $X\ot Y$ is
a scalar);
\item[(ii)] $s_{X,Y}=0$ (where $s$ is the S-matrix).
\end{enumerate}
\end{lemma}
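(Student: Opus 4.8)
The plan is to use the interaction between the $S$-matrix and the fusion coefficients, together with a standard algebraic-integer/coprimality argument analogous to the one appearing in Burnside's theorem. First I would recall the basic balancing and product formulas for $S$-matrix entries in a ribbon category: for simple objects $X,Y$ the quantity $s_{X,Y}/d_Y$ (or $s_{X,Y}/d_X$) is an algebraic integer, being an eigenvalue of a fusion matrix, and moreover $s_{X,Y}=\theta_X^{-1}\theta_Y^{-1}\sum_Z N_{X,Y}^Z \theta_Z d_Z$ where $\theta$ denotes the twists. The key ratio to examine is $\frac{s_{X,Y}}{d_X}$, which is a sum over simple constituents $Z$ of $X\otimes Y$ of algebraic integers weighted by the ratio of dimensions; more precisely $s_{X,Y}/d_X$ is an algebraic integer and a root of unity times $d_Y$ appears through the monodromy eigenvalues.

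The central mechanism is as follows. Because $X$ and $Y$ have coprime dimensions $d_X,d_Y$, there exist integers $a,b$ with $a d_X + b d_Y = 1$, so $\frac{1}{d_X d_Y} = \frac{a}{d_Y}+\frac{b}{d_X}$. I would consider the quantity $\frac{s_{X,Y}}{d_X d_Y}$. The squared braiding $c_{Y,X}c_{X,Y}$ acts on each isotypic component $Z\subset X\otimes Y$ as a root of unity $\lambda_Z = \theta_Z/(\theta_X\theta_Y)$, and taking quantum traces gives $s_{X,Y}=\sum_Z N_{X,Y}^Z \lambda_Z d_Z$. Dividing by $d_X d_Y$ and using that $\sum_Z N_{X,Y}^Z d_Z = d_X d_Y$, one sees that $\frac{s_{X,Y}}{d_X d_Y}$ is an average of the roots of unity $\lambda_Z$ with nonnegative weights summing to $1$. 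Hence $\left|\frac{s_{X,Y}}{d_X d_Y}\right|\le 1$, with equality iff all the $\lambda_Z$ appearing are equal — which is precisely the condition that the squared braiding is a scalar, i.e. case (i).

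The final step combines this bound with an algebraic-integer argument. The number $\alpha := \frac{s_{X,Y}}{d_X d_Y}$ is an algebraic integer: using the Bézout relation, $\alpha = a\cdot\frac{s_{X,Y}}{d_Y} + b\cdot\frac{s_{X,Y}}{d_X}$, and each of $\frac{s_{X,Y}}{d_X}$, $\frac{s_{X,Y}}{d_Y}$ is an algebraic integer (eigenvalue of a fusion matrix). Now I would apply the classical lemma that an algebraic integer all of whose Galois conjugates have absolute value $\le 1$ is either $0$ or a root of unity. Every Galois conjugate of $\alpha$ is again of the form (average of roots of unity with the same weights), so has absolute value $\le 1$ by the same computation. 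Therefore either $\alpha=0$, giving $s_{X,Y}=0$ and case (ii); or $\alpha$ is a root of unity, forcing $|\alpha|=1$, hence equality in the triangle inequality, hence all $\lambda_Z$ equal, hence case (i).

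The main obstacle I anticipate is making the ``equality in the averaging bound forces a single root of unity, which forces scalar squared braiding'' step fully rigorous, since I must be careful that the weights $N_{X,Y}^Z d_Z$ are strictly positive on exactly the constituents that occur and that equal eigenvalues $\lambda_Z$ across all constituents genuinely means the monodromy $c_{Y,X}c_{X,Y}$ is a scalar (not merely that its trace has maximal modulus). Establishing that the monodromy is diagonalizable with eigenvalues $\lambda_Z$ on the $Z$-isotypic components, and hence scalar exactly when these coincide, is the crux; the algebraic-integer and coprimality bookkeeping is routine by comparison.
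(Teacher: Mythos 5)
Your proposal is correct and is essentially the paper's own proof: both rest on the Bézout/coprimality observation that $s_{X,Y}/(d_Xd_Y)$ is an algebraic integer, the expression of $s_{X,Y}$ as a sum of $d_Xd_Y$ roots of unity via the twists, and the Kronecker dichotomy (a root of unity, which forces the squared braiding to be a scalar, versus zero). The only difference is cosmetic: the paper first embeds the category into its Drinfeld center to reduce to the non-degenerate case before invoking the Verlinde formula for the integrality of $s_{X,Y}/d_X$ and $s_{X,Y}/d_Y$, whereas you invoke the fusion-character (eigenvalue of a fusion matrix) property directly, which is indeed valid in any premodular category, so the reduction is not needed.
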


\begin{proof} It suffices to consider the case when the category
is non-degenerate, since any braided category can be embedded into a
non-degenerate one (its  center). In this case,  
by the Verlinde formula, $\frac{s_{X,Y}}{d_X}$ and $\frac{s_{X,Y}}{d_Y}$ are algebraic
integers. Since $d_X$ and $d_Y$ are coprime, $\frac{s_{X,Y}}{d_Xd_Y}$ is also an
algebraic integer. Since $s_{X,Y}$ is a sum of $d_Xd_Y$ roots of unity, we see that 
$\frac{s_{X,Y}}{d_Xd_Y}$ is either a root of unity (in which case
the square of the braiding must be a scalar, option (i)), or 0
(option (ii)).
\end{proof}

\begin{corollary}\label{primpow} Let $\E$ be an integral non-degenerate braided category which contains a  simple object $X$ with
dimension $d_X=p^r$, $r>0$, where $p$ is a prime. Then $\E$
contains a nontrivial symmetric subcategory.
\end{corollary}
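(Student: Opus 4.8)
The goal is to produce a nontrivial symmetric subcategory of $\E$, using the existence of a simple object $X$ with $d_X=p^r$, $r>0$. The plan is to use Lemma~\ref{key} together with a column-orthogonality (Burnside-type) relation for the $S$-matrix of the modular category $\E$ (recall $\E$, being integral and non-degenerate, is canonically modular). The classical Burnside argument in group theory fixes a conjugacy class whose size is a prime power and shows, using column orthogonality of the character table, that the corresponding class must lie in the center; here the analogue is to fix the \emph{column} of the $S$-matrix indexed by $X$ and examine the orthogonality relation $\sum_Z s_{X,Z}\,\overline{s_{\be,Z}} = 0$, equivalently $\sum_Z s_{X,Z}\,\overline{d_Z}=0$ (the columns indexed by $X$ and by $\be$ are orthogonal since $X\ne\be$).

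First I would apply Lemma~\ref{key} to the pair $(X,Z)$ for each simple $Z$. The lemma requires coprimality of $d_X=p^r$ and $d_Z$; this holds precisely when $p\nmid d_Z$. So for every simple $Z$ with $p\nmid d_Z$, either $s_{X,Z}=0$, or $X$ and $Z$ projectively centralize each other. The set $T$ of objects $Z$ that projectively centralize $X$ is a fusion subcategory, and the objects genuinely centralizing $X$ (squared braiding equal to the identity, not just a scalar) form the M\"uger centralizer-type subcategory $X'$. The next step is to rewrite the orthogonality relation, separating the sum over $Z$ into: the term $Z=\be$ (contributing $s_{X,\be}\overline{d_{\be}}=d_X=p^r$, which is divisible by $p$), the terms with $p\mid d_Z$ (each divisible by $p$ as an algebraic integer once one checks $s_{X,Z}/d_Z$ is an algebraic integer via Verlinde), and the remaining terms with $p\nmid d_Z$, which by Lemma~\ref{key} survive only when $Z$ projectively centralizes $X$, in which case $|s_{X,Z}|=d_Xd_Z=p^r d_Z$ is again divisible by $p$.

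Reading the orthogonality relation modulo $p$ (in the ring of algebraic integers), every term is divisible by $p$ except possibly those coming from objects $Z$ with $p\nmid d_Z$ that do \emph{not} projectively centralize $X$ — but those vanish. The upshot is an identity forcing the existence of objects projectively centralizing $X$ beyond the trivial one, and from projective centralization one extracts genuine centralization by a standard argument (the scalar by which the squared braiding acts defines a character, whose kernel is a proper symmetric situation). More precisely, I expect to show that the subcategory generated by the objects projectively centralizing $X$ has a nontrivial M\"uger center, or directly that $X'$ together with the invertible object implementing the projective scalars yields a nontrivial symmetric subcategory. The cleanest route is: the projective centralizer of $X$ contains a nontrivial subcategory, and within it the squared-braiding scalars are controlled by a grading, whose trivial component is an honest (symmetric) centralizer.

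The hard part will be the final extraction of a genuinely \emph{symmetric} subcategory from the mod-$p$ counting argument: ensuring that what we obtain is nontrivial (not just $\Vec$) and that projective centralization can be upgraded to a symmetric structure. The counting shows the projective centralizer of $X$ is strictly larger than expected, but converting ``projectively centralizes'' into ``lies in a symmetric subcategory'' requires care, since a genuine centralizer $X'$ need not be symmetric on its own. I anticipate that the right statement is that the M\"uger center of the projective centralizer of $X$ is nontrivial, and this M\"uger center is the desired symmetric subcategory; verifying its nontriviality is exactly where the prime-power hypothesis $r>0$ is used, paralleling the role of the prime-power class size in Burnside's theorem.
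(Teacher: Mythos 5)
Your toolkit (Lemma \ref{key} plus column orthogonality of the $S$-matrix, in analogy with Burnside) is the same as the paper's, but as written the counting argument proves nothing, because of a normalization error. You use the unnormalized relation $\sum_Z s_{X,Z}\,\overline{d_Z}=0$, whose $\be$-term is $s_{X,\be}=d_X=p^r$, and you correctly observe that this term, the terms with $p\mid d_Z$, and the projectively-centralizing terms with $p\nmid d_Z$ (where $|s_{X,Z}|=d_Xd_Z$) are all divisible by $p$; the remaining terms vanish by Lemma \ref{key}. But then \emph{every} term of the sum is divisible by $p$, so reading the relation modulo $p$ yields $0\equiv 0$: nothing forces the existence of nontrivial objects projectively centralizing $X$, and your ``upshot'' is a non sequitur. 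The fix (and what the paper does) is to divide by $d_X$, which is legitimate because $s_{X,Z}/d_X$ is an algebraic integer by the Verlinde formula: in $\sum_Z \tfrac{s_{X,Z}}{d_X}d_Z=0$ the unit term is exactly $1$, each term with $p\mid d_Z$ is divisible by $p$, and a projectively-centralizing $Z\neq\be$ with $p\nmid d_Z$ contributes $\lambda_Z d_Z^2$ ($\lambda_Z$ a root of unity), which is \emph{not} divisible by $p$; hence such a $Z$ must exist, as otherwise $1\equiv 0 \pmod p$ in the algebraic integers. The paper phrases this contrapositively: assuming $\E$ has no nontrivial proper fusion subcategory, every such term vanishes (a projectively-centralizing $Z$ would make $\langle Z\rangle'$ a nontrivial subcategory, since it contains the nontrivial summands of $X\ot X^*$), and the normalized relation then gives the contradiction $0=1+(\text{multiple of }p)$.

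The second gap is the passage from ``nontrivial projective centralizer'' to ``nontrivial symmetric subcategory.'' You propose that the M\"uger center of the projective centralizer of $X$ is nontrivial, but this is precisely the assertion that needs proof, and it does not hold for free. The projective centralizer of the simple object $X$ is $\langle X\ot X^*\rangle'$ (objects projectively centralizing $X$ are exactly those genuinely centralizing $X\ot X^*$), so by M\"uger's double centralizer theorem in the non-degenerate category $\E$ its M\"uger center equals $\langle X\ot X^*\rangle\cap\langle X\ot X^*\rangle'$, i.e.\ the M\"uger center of $\langle X\ot X^*\rangle$ --- and nothing you have established rules out that $\langle X\ot X^*\rangle$ is itself non-degenerate, in which case this center is trivial. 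The paper closes exactly this hole by induction on $\FPdim(\E)$: once a nontrivial proper subcategory $\D$ is produced, either $\D$ is degenerate, and then $\D\cap\D'$ is the desired nontrivial symmetric subcategory, or $\D$ is non-degenerate, and then $\E=\D\boxtimes\D'$ by Theorem \ref{MuTh}; in the latter case $X=X_1\ot X_2$ with $X_1\in\D$, $X_2\in\D'$ simple, one of $d_{X_1},d_{X_2}$ is a positive power of $p$ (both are integers with product $p^r$), and the inductive hypothesis applies to that smaller non-degenerate integral factor. Without such a splitting-and-recursion step your argument does not terminate in the non-degenerate case, so you should restructure the proof along these lines rather than rely on the unproven claim about the projective centralizer.
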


\begin{proof} We first show that $\E$ contains a nontrivial
proper subcategory. Assume not.
Take any simple $Y\ne \be$ with $d_Y$ coprime to $d_X$. We claim that $s_{X,Y}=0$. Indeed,
otherwise $X$ and $Y$ projectively centralize each other, hence $Y$ centralizes $X\ot X^*$,
so the M\"uger centralizer of the category generated by $Y$ is
nontrivial, and we get a nontrivial proper subcategory, a  contradiction. 

Now let us use the orthogonality of columns $(s_{X,Y})$ and $(d_Y)$
of the $S$-matrix: 
$$\sum_{Y\in {\rm Irr}\E}\frac{s_{X,Y}}{d_X}d_Y=0.
$$
As we have shown, all the nonzero summands in this sum, except
the one for $Y=\be$, come from objects $Y$ of dimension divisible
by $p$. Therefore, all the 
summands in this sum except for the one for $Y=\be$ (which equals
$1$) are divisible by $p$. This is a contradiction.

Now we prove the corollary by induction in $\FPdim(\E)$. 
Let $\D$ be a nontrivial proper subcategory of $\E$. 
If $\D$ is degenerate, then its M\"uger center is a nontrivial
proper symmetric subcategory of $\E$, so we are done. 
Otherwise, $\D$ is non-degenerate, and by Theorem \ref{MuTh},
$\E=\D\boxtimes \D'$. Thus $X=X_1\otimes X_2$, where $X_1\in \D$,
$X_2\in \D'$ are simple. Since the dimension of $X_1$ or $X_2$ is
a positive power of $p$, we get the desired statement from the
induction assumption applied to $\D$ or $\D'$ (which are non-degenerate braided
categories of smaller dimension). 
\end{proof}

\begin{remark} The proof above shows that for a simple object $X$ of an integral non-degenerate
braided category $\E$ with prime power dimension we can find another nontrivial simple object $Y$
such that $X$ and $Y$ projectively centralize each other. This can be used to give a proof
of Burnside theorem that a finite group $G$ with a conjugacy class $C$ of prime power size 
can not be simple (together with Sylow theorem this implies immediately the solvability
of groups of order $p^aq^b$) as follows. Assume that $G$ is simple (it is also nonabelian
since it contains conjugacy class $C$ of size $>1$). Then $G$ is generated by any of its
nontrivial conjugacy classes and $\Rep(G)$ has no nontrivial fusion subcategories. This implies
that the category $\Z(\Rep(G))$ has a unique proper  fusion subcategory, namely $\Rep(G)$
(we recall that the category $\Z(\Rep(G))$ is identified with the category of sheaves on $G$ which
are $G-$equivariant with respect to the adjoint action). Now let $X$ be the simple object of
$\Z(\Rep(G))$ which corresponds to a trivial sheaf supported on $C$; then $d_X=|C|$ is a prime
power, hence $X$ projectively centralizes some nontrivial object $Y$. The object $Y$ is not
invertible since the group $G$ is simple nonabelian; hence $X$ generates a nontrivial 
fusion subcategory of $\Z(\Rep(G))$ contained in M\"uger centralizer of $Y\ot Y^*$. This
is a contradiction since $X$ is not contained in $\Rep(G)\subset \Z(\Rep(G))$ (recall
that the subcategory $\Rep(G)$ consists of objects supported on the unit element $e\in G$).
\end{remark}

\begin{proposition}\label{almnontan}
Let $\C$ be a slightly degenerate integral braided category,
which contains a simple object $X$ of dimension $p^r$ for some 
prime $p>2$. Then $\C$ contains a nontrivial Tannakian subcategory. 
\end{proposition}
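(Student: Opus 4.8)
The plan is to induct on $\FPdim(\C)$, using throughout that to produce a nontrivial Tannakian subcategory it suffices, by Deligne's theorem \ref{DeT}, to exhibit a symmetric subcategory that is neither trivial nor equivalent to $\mathrm{SuperVec}$ (a symmetric category of dimension $>2$ contains a nontrivial Tannakian subcategory, and a symmetric category of dimension $2$ is either $\Rep(\mathbb{Z}/2\mathbb{Z})$ or $\mathrm{SuperVec}$). Let $\chi$ generate $\C'=\mathrm{SuperVec}$. First I would dispose of the case where $\C$ has a nontrivial proper non-degenerate subcategory $\D$: by M\"uger's theorem \ref{MuTh}, $\C=\D\boxtimes\D'$ with $\chi\in\D'$, so $\D'$ is slightly degenerate and smaller. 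Writing $X=X_1\boxtimes X_2$ with $\FPdim(X_1)\FPdim(X_2)=p^r$, if the $\D'$-factor $X_2$ is nontrivial it has dimension a positive power of $p$ and induction applied to $\D'$ finishes. If instead $X\in\D$, then $\D$ is non-degenerate and integral, so Corollary \ref{primpow} yields a nontrivial symmetric subcategory $\mathcal S\subset\D$; if $\mathcal S$ is not Tannakian it contains a fermion $f\in\D$, and then $f\boxtimes\chi\in\D\boxtimes\D'=\C$ is an order-two \emph{boson} generating $\Rep(\mathbb{Z}/2\mathbb{Z})$. This ``fermion times $\chi$'' device works for every prime.

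It remains to treat the case where $\C$ has no nontrivial proper non-degenerate subcategory. Here I would reduce further via the subcategory $\langle X\rangle$ generated by $X$: if $\langle X\rangle\neq\C$ it is proper, hence degenerate, and still contains $X$, so (unless its M\"uger center $\langle X\rangle\cap\langle X\rangle'$ already exceeds $\mathrm{SuperVec}$, in which case we are done) induction applies to $\langle X\rangle$. Thus the genuine base case is that $\C$ is generated by $X$ and has no proper non-degenerate subcategory. In this case I would run the character-theoretic count of Corollary \ref{primpow} on the \emph{reduced} $S$-matrix: by Corollary \ref{ones} the $S$-matrix has the shape $I\otimes S'$ with $S'$ non-degenerate and with orthogonal columns, so the simple objects fall into $\chi$-orbits $\{Z,\chi\otimes Z\}$ and $S'$ obeys the usual orthogonality. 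Applying orthogonality to the column of $X$ and dividing by $\FPdim(X)=p^r$ gives $1+\sum_{[Z]\neq[\be]}\tfrac{s_{X,Z}}{p^r}\FPdim(Z)=0$. By Lemma \ref{key} every orbit with $\FPdim(Z)$ prime to $p$ has either $s_{X,Z}=0$ or $Z$ projectively centralizing $X$, while the remaining terms are divisible by $p$. Reducing modulo a prime over $p$ forces an orbit $[Y]\neq[\be]$ with $\FPdim(Y)$ prime to $p$ that projectively centralizes $X$. Hence $\D:=\langle X\otimes X^*\rangle'$ is a proper subcategory strictly containing $\mathrm{SuperVec}$; being proper and nontrivial it is degenerate, so its M\"uger center $\D\cap\D'$ is a nontrivial symmetric subcategory of $\C$.

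The main obstacle is the final step: showing this symmetric subcategory is genuinely Tannakian rather than (a copy of) $\mathrm{SuperVec}$, and it is precisely here that $p>2$ must enter. If $\D\cap\D'\supsetneq\mathrm{SuperVec}$ we finish by Deligne \ref{DeT}; the delicate situation is $\D\cap\D'=\langle\chi\rangle$, in which $\D$ is itself a proper slightly degenerate category. I would like to recurse on $\D$, but the inductive hypothesis demands a simple object of \emph{odd prime power} dimension, whereas Corollary \ref{odd} only guarantees $\D$ an odd-dimensional simple object outside its center. Closing this gap is the crux: one must either refine the modular count so that the centralizing object $Y$, or a suitable odd-dimensional constituent of $Y\otimes Y^{*}$, can be arranged to have odd prime power dimension and thereby feed the induction, or sharpen the orthogonality relation into an outright numerical contradiction that exploits that $\FPdim(X)=p^{r}$ is \emph{odd} (the parity of the terms indexed by even-dimensional orbits being controlled only when $p\neq 2$). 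I expect this odd/prime-power bookkeeping, rather than any new categorical construction, to be the technical heart of the argument, all the categorical inputs (Theorems \ref{MuTh} and \ref{DeT}, Corollaries \ref{ones}, \ref{odd}, \ref{primpow}, and the fermion-times-$\chi$ device) being already available.
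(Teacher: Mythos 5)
Your plan follows the paper's proof quite closely: the same induction on $\FPdim(\C)$, the same preliminary splitting (the paper only ever splits off the pointed part $\B$ via Proposition \ref{almnon}(ii), but your more general splitting along a non-degenerate subcategory works, and your fermion-times-$\chi$ device is a correct way to handle the one situation the paper's route never meets, namely hitting a bare copy of ${\rm SuperVec}$ inside a non-degenerate factor; note only that a non-Tannakian symmetric category need not literally contain a fermion --- it may instead have dimension $>2$, in which case it already contains the nontrivial Tannakian subcategory $\Rep(G/(u))$, so your dichotomy survives). The endgame is also the paper's: play Lemma \ref{key} against orthogonality of $S$-matrix columns to force the existence of a non-invertible simple $Y$ with $\FPdim(Y)$ prime to $p$ which projectively centralizes $X$. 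Your orbit-normalized count is legitimate --- it is exactly one half of the relation $\sum_Y s_{X,Y}d_Y/d_X=0$ used in the paper --- and, for the record, the paper's version of the count (summing over all simples, so that $\be$ and $\chi$ each contribute $1$) is precisely where $p>2$ enters: the contradiction there reads ``$p$ divides $2$''. So your suspicion that $p>2$ must be smuggled into the final recursion, and that some delicate odd/prime-power bookkeeping on $Y$ is required, is misplaced.

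The genuine gap is the one you flag, and the cure is one line, not bookkeeping: you centralized the wrong object. Your $\D=\langle X\otimes X^*\rangle'$ contains $Y$ and $\chi$, but nothing puts $X$ (or any simple of odd prime power dimension) inside it, which is exactly why the inductive hypothesis cannot be invoked. Use instead the symmetry of projective centralization: since the squared braiding on $X\otimes Y$ is a scalar, $X$ centralizes $Y\otimes Y^*$, hence $X\in\langle Y\otimes Y^*\rangle'=:\D$, and also $\chi\in\D$. This $\D$ is proper: if $\D=\C$ then $Y\otimes Y^*\subseteq\C'=\langle\chi\rangle$, and since $\chi\otimes Y\not\cong Y$ by Proposition \ref{almnon}(i) the object $\chi$ cannot occur in $Y\otimes Y^*$, forcing $Y\otimes Y^*=\be$, i.e.\ $Y$ invertible, a contradiction. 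Now either the M\"uger center of $\D$ is strictly larger than ${\rm SuperVec}$, hence is a symmetric category of dimension $>2$ and contains a nontrivial Tannakian subcategory; or $\D$ is slightly degenerate, integral, of strictly smaller dimension, and contains the \emph{same} $X$ of dimension $p^r$, so the inductive hypothesis applies to $\D$ with the same object and the same prime. The induction therefore carries $X$ and $p$ unchanged all the way down; the only role of $Y$ is to certify that $\langle Y\otimes Y^*\rangle'$ is proper. (The paper's own wording of this step --- recursing on the subcategory generated by $Y$ and $\chi$ --- must be read with the same correction, since that subcategory likewise is not known to contain a prime-power-dimensional simple.) With this single replacement your argument closes and is essentially the paper's proof.
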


\begin{proof} The proof is by induction on the dimension of $\C$. Let $\B$ 
be the category spanned by the invertible objects of $\C$. 
Then the M\"uger center of $\B$ contains the category ${\rm SuperVec}$.

If the M\"uger center of $\B$ is bigger than ${\rm SuperVec}$, then it
contains a nontrivial Tannakian subcategory, and we are done. 
Otherwise, by Proposition \ref{almnon}(ii), $\B={\rm SuperVec}\boxtimes \B_0$, 
where $\B_0$ is a pointed non-degenerate braided category. 
If $\B_0$ is nontrivial, then $\C=\B_0\boxtimes
\B_0'$, and $\B_0'$ is slightly degenerate, so 
we are done by the induction assumption. Thus, it suffices to
consider the case $\B={\rm SuperVec}$, which we do from now on.  

Let $\be$ and $\chi$ 
be the simple objects of ${\rm SuperVec}\subset \C$
(which are the only invertible objects of $\C$). 
Let $Y$ be a non-invertible simple object of $\C$ of dimension
not divisible by $p$. 

Assume that $X$ and $Y$ projectively centralize each other. 
In this case the category generated by $Y$ and $\chi$ centralizes $X\otimes
X^*$, so it is a proper subcategory of $\C$. If it is
not slightly degenerate, its M\"uger center contains more than
two simple objects, hence contains a nontrivial Tannakian subcategory. 
So we may assume that this subcategory is slightly degenerate, 
in which case we are done by the induction assumption. 

Thus, we may assume that $X$ and $Y$ do  not
projectively centralize each other. In this case 
Lemma \ref{key} tells us that $s_{X,Y}=0$.

Now, since $\C$ is slightly degenerate, by
Corollary \ref{ones}, we have 
$$
\sum_Y \frac{s_{X,Y}}{d_X}d_Y=0, 
$$
and all the nonzero terms in this sum correspond to either $\dim
Y=1$ (there are two such terms, both equal to $1$), 
or $\dim Y$ divisible by $p$, which gives terms divisible by
$p$. So we get that $2$ is divisible by $p$, a contradiction. 
\end{proof}

\section{Proof of Theorem \ref{Th2}}
\label{Sect 8}

We are going to prove the following theorem, which will easily imply
Theorem \ref{Th2}.

\begin{theorem}\label{integ} 
Let $\E$ be an integral non-degenerate braided category of dimension 
$p^aq^b$, where $p<q$ are primes, and $a,b$ nonnegative integers.
If $\E$ is not pointed, 
then it contains a Tannakian subcategory of the form $\Rep(G)$,
where $G$ is a cyclic group of prime order. 
\end{theorem}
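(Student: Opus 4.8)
The goal is to produce a Tannakian subcategory $\Rep(G)$ with $G$ cyclic of prime order inside an integral non-degenerate braided category $\E$ of dimension $p^aq^b$ that is not pointed. My plan is to induct on $\FPdim(\E)$ and split into cases according to whether $\E$ contains a non-invertible simple object of prime-power dimension. The first observation is that if $\E$ is not pointed, then it has a non-invertible simple object $X$; I want to arrange that $d_X$ is a power of a prime. Since $\FPdim(\E)=p^aq^b$ and $d_X^2$ divides $\FPdim(\E)$ by Theorem \ref{divis}(i), each $d_X$ is of the form $p^iq^j$. The plan is to first reduce to the situation where some non-invertible simple object has \emph{prime-power} dimension, and then invoke Corollary \ref{primpow}.

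First I would dispose of the easy structural reductions. If $\E$ contains a proper nontrivial non-degenerate subcategory $\D$, then by M\"uger's theorem (Theorem \ref{MuTh}) $\E=\D\boxtimes\D'$, and since the dimensions of $\D,\D'$ again factor as powers of $p$ and $q$ and at least one of them is non-pointed, I can finish by the induction hypothesis applied to that factor. So I may assume every proper nontrivial subcategory of $\E$ is degenerate; in particular the subcategory generated by any single non-invertible simple object, if proper, has a nontrivial M\"uger center, which is a symmetric subcategory. By Deligne's theorem (Theorem \ref{DeT}), a symmetric category of dimension $>2$ contains a nontrivial Tannakian subcategory, hence (passing to $\Rep$ of a cyclic quotient of prime order) yields the desired $\Rep(G)$. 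The remaining delicate point is a symmetric subcategory equivalent to $\mathrm{SuperVec}$, which I must handle separately.

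The heart of the argument is producing a non-invertible simple object of prime-power dimension so that Corollary \ref{primpow} applies and delivers a nontrivial symmetric subcategory. The idea, parallel to the classical Burnside proof, is a counting/divisibility argument on a column of the $S$-matrix. Suppose for contradiction that every non-invertible simple object has dimension divisible by both $p$ and $q$. Pick the smallest prime, say a simple $X$ whose dimension is a pure power of $p$ (or $q$); if none exists, I use orthogonality of the column $(s_{\be,Y})=(d_Y)$ against itself together with the relation $\sum_Y d_Y^2=\FPdim(\E)$ to force a contradiction modulo $p$, exactly as in the proof of Corollary \ref{primpow}. Concretely, for a suitable simple $X$ I apply Lemma \ref{key}: for each simple $Y$ with $d_Y$ coprime to $d_X$, either $X,Y$ projectively centralize each other (which, since all proper subcategories are degenerate, forces $Y$ into a symmetric subcategory via centralizing $X\otimes X^*$) or $s_{X,Y}=0$. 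Feeding this into the orthogonality relation $\sum_Y \frac{s_{X,Y}}{d_X}d_Y=0$ and reducing modulo the relevant prime isolates the term $Y=\be$ and yields a contradiction, forcing the existence of a non-invertible simple of prime-power dimension after all.

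Once I have such an object $X$, Corollary \ref{primpow} gives a nontrivial symmetric subcategory $\S\subset\E$. If $\FPdim(\S)>2$, Deligne's theorem gives a nontrivial Tannakian $\Rep(H)\subset\S$, and taking the image of a prime-order cyclic quotient produces $\Rep(G)$ with $G$ cyclic of prime order, completing the induction. The main obstacle I anticipate is the case $\S\cong\mathrm{SuperVec}$ (forced when $p=2$): here Deligne's theorem gives nothing Tannakian, and I must instead pass to a slightly degenerate category and invoke Proposition \ref{almnontan}. Since $\mathrm{SuperVec}$ requires the prime $2$, I expect the argument to bifurcate on whether $p=2$, with the odd-prime case being clean and the $p=2$ case requiring the slightly degenerate machinery of Section \ref{Sect 7} (Proposition \ref{almnontan}, which produces a nontrivial Tannakian subcategory from a simple object of odd prime-power dimension). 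Managing this $\mathrm{SuperVec}$ obstruction—de-equivariantizing or localizing to reach a slightly degenerate category of strictly smaller dimension where the induction or Proposition \ref{almnontan} applies—will be the technically hardest step.
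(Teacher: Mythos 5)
Your skeleton is essentially the paper's: induction on dimension, M\"uger's Theorem \ref{MuTh} to split off non-degenerate proper subcategories, a divisibility count to produce a simple object of prime power dimension, Corollary \ref{primpow} plus Deligne's theorem to extract a Tannakian subcategory, and Proposition \ref{almnontan} for the $\mathrm{SuperVec}$ obstruction. But there are two genuine gaps. The first is in your ``heart of the argument'': the counting step fails whenever $\E$ has nontrivial invertible objects. You assume every non-invertible simple has dimension divisible by $pq$ and claim that $\sum_Y d_Y^2=\FPdim(\E)$ gives a contradiction modulo $p$ ``exactly as in Corollary \ref{primpow}.'' That argument isolates the contribution $1$ of $\be$ against terms divisible by $p^2q^2$; but each nontrivial invertible object also contributes $1$, and the pointed part $\B$ contributes in total $\FPdim(\B)=p^cq^d$ (a divisor of $p^aq^b$ by Theorem \ref{divis1}), so the congruence $p^aq^b\equiv p^cq^d \pmod{p^2q^2}$ can perfectly well hold (e.g.\ $c,d\geq 2$) and no contradiction results. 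This is why the paper runs the count only under the hypothesis that $\E$ has \emph{no} nontrivial invertible objects (Proposition \ref{invob}, where the conclusion is then contradicted via Corollary \ref{primpow}, Deligne's theorem and Burnside's theorem for groups, since $\Rep$ of a solvable group has nontrivial invertibles). When $\B$ is nontrivial, the paper never tries to produce a prime-power-dimensional simple; it analyzes $\B$ itself: $\B$ non-degenerate is impossible (its centralizer would violate Proposition \ref{invob}; in your setup, induction also works here), so $\B$ is degenerate and its M\"uger center $\mZ$ is a nontrivial pointed symmetric category, which either contains a prime-order Tannakian subcategory or is $\mathrm{SuperVec}$.

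The second gap is that the $\mathrm{SuperVec}$ case---the only place where $p=2$ genuinely requires work---is not proved in your proposal; you correctly name Proposition \ref{almnontan} as the tool but explicitly defer the construction. The paper's resolution is concrete: using Proposition \ref{almnon}(ii) and Theorem \ref{MuTh}, reduce to the case where the entire pointed part equals $\mathrm{SuperVec}$; then pass to the M\"uger centralizer $\C$ of $\mathrm{SuperVec}$ (not a de-equivariantization), which is a slightly degenerate integral category of dimension $2^{a-1}q^b>2$ with exactly two invertible objects; Corollary \ref{odd} then produces a simple object of odd dimension outside the M\"uger center, which must have dimension $q^m$ with $m>0$ because the only invertibles are $\be$ and $\chi$; finally Proposition \ref{almnontan} yields the Tannakian subcategory. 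Producing that odd prime-power-dimensional simple inside the centralizer is precisely the missing content, so as written the proposal does not close the case $p=2$, and its main counting step needs to be restricted to the invertible-free situation and supplemented by the pointed-part analysis above.
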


Let us explain how Theorem \ref{integ} implies Theorem \ref{Th2}.
Let $\C$ be a fusion category of dimension $p^rq^s$, where $p<q$
are primes, and $r,s\ge 0$ are nonnegative integers. 
We prove that $\C$ is solvable by induction in $r+s$. 
We can assume that $\C$ is integral, because if 
not, then $\C$ is $\Bbb Z/2\Bbb
Z$-graded, so we are done by the induction assumption.
Also, we can clearly assume that $\C$ is not pointed. 
Clearly, the  center $\E:=\Z(\C)$ is not pointed. 
So the result follows by using Theorem \ref{integ}
and Proposition \ref{basicp2}. 

The rest of the section is devoted to the proof of Theorem \ref{integ}. 

\begin{proposition}\label{invob} Let $\E$ be an integral non-degenerate braided
category of dimension $p^aq^b$, $a+b>0$. Then $\E$
contains a nontrivial invertible object.
\end{proposition}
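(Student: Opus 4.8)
The plan is to argue by contradiction. Suppose $\E$ has no nontrivial invertible object, so that $\be$ is the only simple object of dimension $1$. Since $\FPdim(\E)=p^aq^b>1$ we have $\E\neq\Vec$, so $\E$ does contain some simple $X\neq\be$, necessarily of dimension $>1$. By Theorem~\ref{divis}(i) every simple $X$ satisfies $\FPdim(X)^2\mid p^aq^b$, so $\FPdim(X)=p^iq^j$ for suitable $i,j\ge 0$. The key intermediate goal is to produce a simple object whose dimension is a nontrivial power of a \emph{single} prime, since Corollary~\ref{primpow} then applies directly.

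I would establish the existence of such an object by a class-equation argument. Write $p^aq^b=\FPdim(\E)=1+\sum_{X\neq\be}\FPdim(X)^2$. If no nontrivial simple had prime-power dimension, then every $X\neq\be$ would have dimension divisible by both $p$ and $q$ (as $\FPdim(X)>1$), whence $\FPdim(X)^2$ is divisible by $p^2q^2$. Reducing the displayed identity modulo $p$ when $a\ge 1$, or modulo $q$ when $a=0$ (so $b\ge 1$, which covers the remaining case since $a+b>0$), yields $0\equiv 1$, a contradiction. Hence some simple $X_0$ has dimension $\ell^r$ with $\ell\in\{p,q\}$ and $r\ge 1$. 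This arithmetic step is where the hypothesis on $\FPdim(\E)$ is genuinely used, and I expect it to be the main point to get right.

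Applying Corollary~\ref{primpow} to $X_0$, the category $\E$ contains a nontrivial symmetric subcategory $\mathcal S$; it is integral and $\FPdim(\mathcal S)\mid p^aq^b$ by Theorem~\ref{divis1}. By Deligne's theorem (Theorem~\ref{DeT}) and the observation following it, either $\FPdim(\mathcal S)>2$, so $\mathcal S$ contains a nontrivial Tannakian subcategory $\Rep(H)$ with $H\neq 1$ and $|H|=p^cq^d$, or $\FPdim(\mathcal S)=2$, so $\mathcal S$ is $\Rep(\mathbb{Z}/2)$ or ${\rm SuperVec}$. In the latter two cases the sign object, respectively the odd line $\chi$, is a nontrivial invertible object of $\E$. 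In the former case the classical Burnside theorem shows that $H$ is solvable, so $H/[H,H]\neq 1$ and $H$ admits a nontrivial one-dimensional representation, again a nontrivial invertible object of $\Rep(H)\subset\E$. Each alternative contradicts the assumption that $\E$ has no nontrivial invertible object, completing the proof.
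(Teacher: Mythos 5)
Your proof is correct and takes essentially the same route as the paper's: a class-equation argument based on Theorem~\ref{divis}(i) produces a simple object of prime-power dimension, Corollary~\ref{primpow} then yields a nontrivial symmetric subcategory, and Deligne's theorem (Theorem~\ref{DeT}) together with Burnside's theorem forces that subcategory to contain a nontrivial invertible object, a contradiction. The only difference is organizational: the paper first disposes of the case $a=0$ or $b=0$ by citing the result of \cite{ENO} that a fusion category of prime-power dimension is nilpotent (hence has a nontrivial invertible object), whereas your modular-arithmetic step handles that case uniformly within the same contradiction argument.
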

\begin{proof} By \cite{ENO} a fusion category of a prime power Frobenius-Perron dimension
is nilpotent and hence contains a nontrivial invertible object.
So we may assume that $a,\,b>0$. Assume the contrary, i.e. that $\E$ does not contain nontrivial 
invertible objects. By Theorem \ref{divis}, the squared dimensions of simple objects of $\E$
divide $p^aq^b$. Therefore,  
$\E$ must contain a simple object of dimension $p^r$,
$r>0$. Hence by Corollary \ref{primpow}, it contains a nontrivial
symmetric subcategory $\D$. By Theorem \ref{DeT}
$\D$ is super-Tannakian, and therefore by 
the usual Burnside theorem for finite groups (saying that a group
of order $p^aq^b$ is solvable), it must contain nontrivial 
invertible objects, which is a contradiction.  
\end{proof}

Consider now the subcategory $\B$ spanned by all invertible objects of $\E$. 
Proposition \ref{invob} implies that this subcategory is nontrivial. 
If $\B$ is non-degenerate, then by Theorem \ref{MuTh}, 
$\E=\B\boxtimes \B'$, where $\B'$ is another non-degenerate braided category, 
which is nontrivial (as $\E$ is not pointed), but 
has no nontrivial invertible objects. Thus, by Proposition \ref{invob},
this case is impossible. 

Therefore, $\B$ is degenerate. Consider the M\"uger center $\mZ$ of $\B$. 
It is a nontrivial pointed symmetric subcategory in $\E$. 
So if ${\rm FPdim}(\mZ)>2$,
we are done (as $\mZ$ necessarily contains a Tannakian
subcategory $\Rep(G)$, where $G$ is a cyclic group of prime order). 

It remains to consider the case ${\rm FPdim}(\mZ)=2$. In this
case, we must consider the additional possibility that 
$\mZ$ is the symmetric category ${\rm
SuperVec}$ of super vector spaces (in which case $p=2$). In this
situation, by Proposition \ref{almnon}(ii), $\mB=\mZ\boxtimes \D$, where $\D$ is
non-degenerate, so if $\D$ is nontrivial, by Theorem \ref{MuTh}
$\E=\D\boxtimes \D'$, where $\D'$ is another non-degenerate braided
category, whose subcategory of invertible objects is $\mZ$. 
Thus, it is sufficient to consider the case 
$\B=\mZ={\rm SuperVec}$. In this case,
let $\C\supset \mZ$ be the M\"uger centralizer
of $\mZ$. This category has dimension is $2^{a-1}q^b>2$, contains only two
invertible objects, and its M\"uger center is $\mZ={\rm
SuperVec}$, i.e. it is slightly degenerate. 
Therefore, Theorem \ref{integ} follows from 
the following proposition. 

\begin{proposition}\label{p2}
Let $\C$ be a slightly degenerate integral braided
category of dimension $2^rq^s>2$, where $q>2$ is a prime, and $r,s$
are nonnegative integers. Suppose that $\C$ contains only two
invertible objects. Then $\C$ contains a nontrivial Tannakian subcategory. 
\end{proposition}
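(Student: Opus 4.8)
The plan is to reduce the statement to Proposition~\ref{almnontan}, which already handles a slightly degenerate integral braided category containing a simple object of \emph{odd} prime power dimension. So the whole task becomes the following: exhibit inside $\C$ a simple object whose Frobenius--Perron dimension is a positive power of the odd prime $q$, and then quote Proposition~\ref{almnontan}.

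First I would invoke Corollary~\ref{odd}. Since $\C$ is slightly degenerate, integral, and of dimension $>2$, that corollary produces a simple object $X$ lying outside the M\"uger center $\C'$ and having odd Frobenius--Perron dimension. Here $\C'={\rm SuperVec}$ is generated by the unique nontrivial invertible object $\chi$, so $\C'$ already contains both invertible objects $\be$ and $\chi$ of $\C$; consequently any simple object outside $\C'$ --- in particular $X$ --- is non-invertible, i.e. $\FPdim(X)>1$.

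Next I would pin down the dimension of $X$ by an arithmetic argument. Applying Theorem~\ref{divis}(ii) to the weakly integral braided category $\C$, the ratio $\FPdim(\C)/\FPdim(X)$ is the square root of an integer; hence $\FPdim(X)^2$ divides $\FPdim(\C)^2 = 2^{2r}q^{2s}$, which forces $\FPdim(X)=2^aq^b$ for some integers $a,b\ge 0$. Since $\FPdim(X)$ is odd we get $a=0$, and since $X$ is non-invertible we get $b\ge 1$. Thus $\FPdim(X)=q^b$ is a positive power of the odd prime $q>2$ (this incidentally forces $s\ge 1$, so the degenerate case $s=0$ does not actually arise).

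Finally, with such an object in hand, Proposition~\ref{almnontan} (taken with $p=q$) immediately yields a nontrivial Tannakian subcategory of $\C$, completing the argument. The genuine difficulty is not located in this reduction but is already absorbed into Proposition~\ref{almnontan} (and, through it, Lemma~\ref{key} and the $S$-matrix orthogonality of Corollary~\ref{ones}); the only point here that requires care is the arithmetic step showing that an odd-dimensional simple object must have dimension a pure power of $q$ with no factor of $2$. That step rests squarely on the square-root divisibility of Theorem~\ref{divis}(ii), together with the observation that both invertibles of $\C$ sit inside $\C'$, which upgrades ``outside $\C'$'' to ``non-invertible'' and thereby guarantees $b\ge 1$.
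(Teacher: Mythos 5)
Your proof is correct and follows essentially the same route as the paper's: Corollary \ref{odd} together with the divisibility theorem yields a non-invertible simple object of dimension $q^b$, $b\ge 1$ (the paper states this slightly more tersely as "a simple object of dimension $q^m$, $m>0$"), and Proposition \ref{almnontan} then finishes the argument. The only difference is that the paper also records, via Theorem \ref{divis}, the existence of a non-invertible simple object of $2$-power dimension --- a datum that supports the object $Y$ posited inside the proof of Proposition \ref{almnontan} but is not required by that proposition's statement, so your omission of it is harmless.
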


\begin{proof}
It follows from Theorem \ref{divis} that there exists a
non-invertible simple object $Y$ of $\C$ whose dimension is a
power of $2$. Also, by Corollary \ref{odd}, $\C$ contains a
simple object $X$ of dimension $q^m$, $m>0$. Now the statement follows from 
Proposition \ref{almnontan}.
\end{proof}

\section{Applications}
\label{Sect 9}

\subsection{Fusion categories of dimension $pqr$}

\begin{proposition}\label{wgtt}
A weakly group-theoretical integral fusion category of square-free
dimension is group-theoretical. 
\end{proposition}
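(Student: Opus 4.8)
The plan is to reduce the statement to a purely structural fact: \emph{a nilpotent fusion category of square-free dimension is pointed}. Granting this, the proposition is immediate. By Definition~\ref{wgt}, a weakly group-theoretical category $\C$ is Morita equivalent to some nilpotent fusion category $\N$, and since Morita equivalent categories have equal Frobenius-Perron dimension, $\FPdim(\N)=\FPdim(\C)$ is square-free. If the structural fact holds, $\N$ is pointed, so $\C$ is Morita equivalent to a pointed category, i.e.\ group-theoretical by the definition in Section~\ref{Sect 2.5}.

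To prove that a nilpotent category $\N$ of square-free dimension is pointed, I would induct on $\FPdim(\N)$, the case $\N=\Vec$ being trivial. For the inductive step, recall from Section~2.1 that iterating the passage to the adjoint subcategory reduces a nilpotent category to $\Vec$; hence for $\N\neq\Vec$ one has $\N_{\rm ad}\subsetneq\N$, equivalently the universal grading group $U_\N$ is nontrivial and $\FPdim(\N_{\rm ad})=\FPdim(\N)/|U_\N|$. The adjoint subcategory $\N_{\rm ad}$ is again nilpotent and its dimension divides $\FPdim(\N)$, hence is square-free, so by the induction hypothesis $\N_{\rm ad}$ is pointed. Now for any simple object $X\in\N$ the object $X\ot X^*$ lies in $\N_{\rm ad}$ and is therefore a direct sum of invertible objects. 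Setting $G[X]=\{g \text{ invertible} : g\ot X\cong X\}$, the identity $\dim\Hom(g, X\ot X^*)=\dim\Hom(g\ot X, X)\in\{0,1\}$ forces $X\ot X^*\cong\bigoplus_{g\in G[X]}\,g$, and consequently $\FPdim(X)^2=|G[X]|$.

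The decisive step is then the divisibility input: the objects of $G[X]$ span a pointed fusion subcategory of $\N$ of dimension $|G[X]|$, so by Theorem~\ref{divis1} the integer $\FPdim(X)^2=|G[X]|$ divides $\FPdim(\N)$. Since $\FPdim(\N)$ is square-free while $\FPdim(X)^2$ is a perfect square dividing it, we conclude $\FPdim(X)=1$. Thus every simple object of $\N$ is invertible, so $\N$ is pointed, completing the induction and the proof.

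I expect the genuine content to lie entirely in the structural lemma rather than in the final reduction; within that lemma, the main point to get right is the identification $X\ot X^*\cong\bigoplus_{g\in G[X]}g$ when $\N_{\rm ad}$ is pointed, which converts the multiplicativity/divisibility of invertible-object subgroups into the square-divisibility $\FPdim(X)^2\mid\FPdim(\N)$ that square-freeness then kills. Note that integrality of $\C$ is used only to guarantee that the square-free dimension is an integer, as $\FPdim(X)^2=|G[X]|$ is forced to be a positive integer by the structure of $\N_{\rm ad}$.
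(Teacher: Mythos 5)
There is a genuine gap, and it sits exactly where you assert that integrality plays no essential role. Your structural lemma --- ``a nilpotent fusion category of square-free dimension is pointed'' --- is false without an integrality hypothesis: for an odd prime $p$, the Tambara--Yamagami category $\mathcal{TY}(\Bbb Z/p\Bbb Z)$ of \cite{TY} is a $\Bbb Z/2\Bbb Z$-extension of the pointed category $\Vec_{\Bbb Z/p\Bbb Z}$, hence nilpotent, and has square-free dimension $2p$, yet it is not pointed (its non-invertible simple object $m$ has $\FPdim(m)=\sqrt{p}$). Running your induction on this example locates the failure precisely: one gets $m\ot m^*\cong\bigoplus_{g\in G[m]}g$ with $|G[m]|=p$, and $p$ divides $2p$, so no contradiction arises. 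The broken step is the sentence ``$\FPdim(\N)$ is square-free while $\FPdim(X)^2$ is a perfect square dividing it'': $\FPdim(X)^2=|G[X]|$ is a perfect square only when $\FPdim(X)$ is an integer, i.e., only when $\N$ is integral --- which is exactly the hypothesis you discarded. Your closing note is therefore backwards: integrality is not needed ``to guarantee that the square-free dimension is an integer'' (square-freeness already presupposes an integer dimension, and $\FPdim(\mathcal{TY}(\Bbb Z/3\Bbb Z))=6$ is an integer); it is what makes the decisive divisibility step valid. Indeed, since your argument never uses integrality of $\C$ in any substantive way, it would prove that \emph{every} weakly group-theoretical fusion category of square-free dimension is group-theoretical, and $\mathcal{TY}(\Bbb Z/3\Bbb Z)$ refutes this: it is weakly group-theoretical (being nilpotent) but not group-theoretical, since group-theoretical categories are integral.

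The repair is to carry integrality through the whole argument, which turns your lemma into precisely the statement the paper's one-line proof cites, namely \cite[Corollary 5.3]{GNk}: a nilpotent \emph{integral} fusion category of square-free dimension is pointed. With that hypothesis your induction works verbatim ($\N_{\rm ad}$ is a full subcategory of an integral category, hence integral, and then $\FPdim(X)\in\Bbb Z$ makes the square-free argument force $\FPdim(X)=1$). But the reduction then needs one further ingredient, which your write-up (and, to be fair, the paper's terse proof as well) leaves unaddressed: the nilpotent category $\N$ Morita equivalent to the integral category $\C$ must itself be integral. This is true because integrality is a Morita invariant --- writing $\N$ as the category of $A$-bimodules in $\C$ for an algebra $A$ in $\C$, the dimension of a simple bimodule $B$ in $\N$ equals $\FPdim_\C(B)/\FPdim_\C(A)$, a rational number which is also an algebraic integer, hence an integer --- but it is a statement that must be proved, not a triviality, and without it the passage from ``$\C$ integral'' to ``$\N$ pointed'' does not go through.
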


\begin{proof}
It follows from \cite[Corollary 5.3]{GNk}
that any nilpotent integral fusion category of
square-free dimension is automatically pointed. 
\end{proof} 

\begin{theorem}\label{pqr} Let $p<q<r$ be a triple of distinct primes.
Then any integral fusion category $\C$ of dimension $pqr$ is
group-theoretical.
\footnote{It is easy to see that any weakly integral but not
integral fusion category of dimension $pqr$ is solvable, because
in this case $p=2$, and the category has a $\Bbb Z/2\Bbb
Z$-grading with trivial component of dimension $qr$. Such
categories are not hard to classify, but we won't do it here.}
\end{theorem}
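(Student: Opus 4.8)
The plan is to deduce group-theoreticity from weak group-theoreticity: since $\FPdim(\C)=pqr$ is square-free, Proposition~\ref{wgtt} guarantees that any weakly group-theoretical integral fusion category of this dimension is automatically group-theoretical. Thus the whole task reduces to showing that $\C$ is \emph{weakly} group-theoretical, and I would set this up as an induction whose point is to cut the number of primes that must be treated simultaneously from three down to two, where Theorem~\ref{Th2} becomes available.

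The first reduction uses the universal grading. Let $U_\C$ be the universal grading group, so that $\C$ is a $U_\C$-extension of its adjoint subcategory $\C_{\rm ad}$ and $\FPdim(\C_{\rm ad})=pqr/|U_\C|$. If $U_\C$ is nontrivial, then $\FPdim(\C_{\rm ad})$ is a proper divisor of $pqr$ and hence equals $1$, a prime, or a product of two distinct primes. In every case $\C_{\rm ad}$ is weakly group-theoretical: a category of prime-power dimension is nilpotent by \cite{ENO}, and a category of dimension $p'q'$ with $p',q'$ prime is solvable by Theorem~\ref{Th2}; both are weakly group-theoretical. Since this class is closed under extensions (Proposition~\ref{basicp1-bis}), $\C$ is weakly group-theoretical. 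This settles every case with nontrivial universal grading, so I am left with the single case $\C=\C_{\rm ad}$.

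For this remaining case I would pass to the center $\E=\Z(\C)$, which is an integral \emph{non-degenerate} braided category of dimension $(pqr)^2$; by Theorem~\ref{divis}(i) the dimension of each of its simple objects divides $pqr$. The strategy is now to run the chain of Proposition~\ref{basicp1}: locate a nontrivial Tannakian subcategory $\Rep(G)\subset\E$, replace $\E$ by the de-equivariantization of the M\"uger centralizer $\Rep(G)'$ by $\Rep(G)$ — again non-degenerate and of strictly smaller dimension (Remark~\ref{modularization}) — and iterate until reaching $\Vec$; the ``if'' direction of Proposition~\ref{basicp1} then yields that $\C$ is weakly group-theoretical. To produce a Tannakian subcategory I would proceed exactly as in Theorem~\ref{integ}: if $\E$ has a simple object of dimension $p$, $q$, or $r$, then Corollary~\ref{primpow} yields a nontrivial symmetric subcategory, and a symmetric subcategory of dimension $>2$ contains a nontrivial Tannakian subcategory by Deligne's Theorem~\ref{DeT}; the residual ${\rm SuperVec}$ possibility (which forces $p=2$) is absorbed by Proposition~\ref{almnontan} and Corollary~\ref{odd}. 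Note that for weak group-theoreticity the group $G$ need not be cyclic of prime order, so any nontrivial Tannakian subcategory suffices to advance one step, and the isotropy bound $\FPdim(\Rep(G))\le pqr$ keeps it a proper subcategory, making the induction well-founded.

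The hard part will be guaranteeing a nontrivial symmetric (hence Tannakian) subcategory of $\E$ in the three-prime situation, precisely because there is no classical Burnside theorem here: groups of order $p^aq^br^c$ need not be solvable, as $A_5$ shows, so the counting step of Theorem~\ref{integ} — which rested on the solvability of groups of order $p^aq^b$ — cannot be copied verbatim. The genuinely delicate configuration is the one in which $\E$ has neither a simple object of prime dimension nor a nontrivial invertible object, so that all nontrivial simple dimensions lie in $\{pq,pr,qr,pqr\}$ and neither Corollary~\ref{primpow} nor the pointed subcategory provides a foothold. Resolving this case — by producing a nontrivial proper fusion subcategory (after which Theorem~\ref{MuTh} and induction on dimension finish the argument) using the $S$-matrix orthogonality relations together with the square-freeness of $pqr$ — is where the real work lies.
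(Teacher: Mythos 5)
Your overall skeleton coincides with the paper's own proof: reduce to weak group-theoreticity via Proposition~\ref{wgtt}, show that $\Z(\C)$ contains a nontrivial Tannakian subcategory $\Rep(G)$, and conclude via Theorem~\ref{Morita eq graded} / Proposition~\ref{basicp1} together with Theorem~\ref{Th2}, since the base category produced has dimension $pqr/|G|$, a product of at most two primes (your preliminary reduction through the universal grading is valid but unnecessary). However, there is a genuine gap, and you flag it yourself: you never produce the nontrivial symmetric subcategory of $\Z(\C)$ in the hard configuration, deferring it as ``where the real work lies.'' That step is the actual content of the paper's proof (Lemma~\ref{symsub}); everything else is routine assembly, so as it stands the proposal proves nothing new.

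For the record, here is what fills the gap. Assume $\Z(\C)$ has no simple object of prime dimension (else Corollary~\ref{primpow} applies). If $\Z(\C)$ has nontrivial invertible objects spanning $\B$, then either $\B$ is degenerate and its M\"uger center is the desired symmetric subcategory, or $\B$ is non-degenerate and $\Z(\C)=\B\boxtimes \B'$ with $\B'$ having no nontrivial invertibles; then $\FPdim(\B')$ fails to be divisible by one of $p^2,q^2,r^2$, forcing, by Theorem~\ref{divis}, all nontrivial simples of $\B'$ to have one and the same dimension (say $qr$), a contradiction. If $\Z(\C)$ has no nontrivial invertibles, all nontrivial simple dimensions are $pq,pr,qr$; take $X$ with $d_X=qr$ and use column orthogonality $\sum_Y \frac{s_{X,Y}}{d_X}d_Y=0$ to find $Y_0$ with $d_{Y_0}=pq$ and $s_{X,Y_0}\ne 0$ (otherwise the sum would equal $1$ modulo $r$). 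Since $s_{X,Y_0}/d_X$ and $s_{X,Y_0}/d_{Y_0}$ are algebraic integers, so is $s_{X,Y_0}/pqr$; hence $s_{X,Y_0}/d_X$ is divisible by $p$. Finally, $\sum_Y \left| \frac{s_{X,Y}}{d_X}\right|^2 = p^2$, where every summand is a totally positive algebraic integer, the summand for $Y=\be$ is $1$, and the summand for $Y_0$ is divisible by $p^2$; applying a Galois automorphism making the $Y_0$-term at least $p^2$ gives left side $\ge 1+p^2$, a contradiction. This divisibility-plus-Galois-positivity argument is precisely what replaces the solvability of groups of order $p^aq^b$ that powered Theorem~\ref{integ}, i.e.\ the fusion-category analogue of Burnside's column-orthogonality lemma; it is the missing piece. (A secondary understatement in your proposal: the $p=2$ endgame is not simply ``absorbed'' by Proposition~\ref{almnontan} and Corollary~\ref{odd} --- the paper must further analyze the M\"uger centralizer of ${\rm SuperVec}$, of dimension $2q^2r^2$, ruling out two $qr$-dimensional simples and splitting off the odd-order invertibles, before those results can be invoked.)
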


\begin{proof} 
By Proposition \ref{wgtt}, it suffices to
show that $\C$ is Morita equivalent to a nilpotent category,
i.e., is weakly group-theoretical. It suffices
to show that the category $\Z(\C)$ contains a
nontrivial Tannakian subcategory; then the result will follow from 
Proposition~\ref{basicp1} and Theorem~\ref{Th2}. 

\begin{lemma}\label{symsub} 
$\Z(\C)$ contains a nontrivial symmetric subcategory.
\end{lemma}

\begin{proof}
By Corollary \ref{primpow}, if $\Z(\C)$ contains a simple object of
prime power dimension, then it contains a nontrivial symmetric
subcategory and we are done. So it suffices to consider the case 
when $\Z(\C)$ does not contain simple objects of prime power
dimension. In this case, by Theorem \ref{divis}, the dimensions of simple objects 
of $\Z(\C)$ can be $1,pq,pr$, and $qr$.

Consider first the case when $\Z(\C)$ contains nontrivial
invertible objects. In this case, let $\B$ be the category
spanned by the invertible objects of $\Z(\C)$. If $\B$ is
degenerate, its M\"uger center is a nontrivial symmetric
category, and we are done. If $\B$ is non-degenerate, then by Theorem
\ref{MuTh}, $\Z(\C)=\B\boxtimes \B'$, where $\B'$ has no nontrivial 
invertible objects. It is clear that $\FPdim(\B')$ is not
divisible by one of the numbers $p^2,q^2,r^2$. Say it is $p^2$. 
Then by Theorem \ref{divis}, all nontrivial simple objects of
$\B'$ have dimension $qr$, which is a contradiction. 

Now consider 
the remaining case, i.e., when $\Z(\C)$ has no nontrivial
invertible objects. Then the dimensions of nontrivial simple objects
in $\Z(\C)$ are $pq, pr, qr$. Let $X$ be a simple object of $\Z(\C)$ of dimension
$qr$ (it is easy to see that it exists). We have the orthogonality relation
$$
\sum_{Y\in {\rm Irr}\Z(\C)}\frac{s_{X,Y}}{d_X}d_Y=0.
$$
Hence there exists $Y_0\in {\rm Irr}\Z(\C)$ of dimension $pq$
such that $s_{X,Y_0}\ne 0$
(otherwise the left hand side will be equal to $1$ modulo $r$). 
Since $\frac{s_{X,Y_0}}{d_X}$ and $\frac{s_{X,Y_0}}{d_{Y_0}}$ are algebraic integers, we have
that $\frac{s_{X,Y_0}}{pqr}$ is an algebraic integer; thus $\frac{s_{X,Y_0}}{d_X}$ is divisible
by $p$. Now we have
\begin{equation}\label{sumsq}
\sum_{Y\in {\rm Irr}\Z(\C)}\left| \frac{s_{X,Y}}{d_X}\right|^2=\frac{\FPdim(\C)^2}{d_X^2}=p^2.
\end{equation}
Notice that since $s_{X,Y}$ is a sum of roots of unity, 
every summand on the left hand side is a totally positive algebraic integer; the summand corresponding
to $Y=\be$ is 1 and the summand $s$ corresponding to $Y=Y_0$ is
an algebraic integer divisible by $p^2$. Thus there exists a Galois automorphism $g$
such that $g(s)\ge p^2$. Applying $g$ to both sides of
(\ref{sumsq}), we get a contradiction, as the left hand side is $\ge 1+p^2$.
\end{proof}

Now let us finish the proof of Theorem \ref{pqr}.
We are done in the case when $pqr$ is odd since a symmetric category of odd dimension
is automatically Tannakian. So let us assume that $p=2$.  
Let us prove that $\Z(\C)$ contains a nontrivial Tannakian subcategory. Assume not.
Then a maximal symmetric subcategory of $\Z(\C)$ is the category of super vector
spaces; let $\Z\subset \Z(\C)$ be its M\"uger
centralizer. Clearly, $\Z$ is slightly degenerate, and $\FPdim(\Z)=2q^2r^2$.

Assume first that $\Z$ has no invertible objects outside of $\Z'={\rm
SuperVec}$. In this case by Proposition~\ref{odd}
$\Z$ contains a non-invertible simple object $X$ of odd
dimension. We must have $d_X =q \mbox{ or } r$, since 
if $d_X = qr$ then $\chi\otimes X\ne X$ would also have
dimension $qr$. But  $\Z$ cannot contain two simple
$qr$-dimensional objects since then $\FPdim(\Z) \geq 1 +2q^2r^2$. 
Thus we are done by Proposition \ref{almnontan}. 

Now assume that $\Z$ does
contain invertible objects outside of
$\Z'={\rm SuperVec}$. In this case, consider the category $\B$
spanned by the invertible objects of $\Z$ of odd order. If $\B$
is degenerate, it contains a Tannakian subcategory and we get a
contradiction. If $\B$ is non-degenerate, then by Theorem \ref{MuTh}, 
$\Z=\B\boxtimes \B'$, and $\B'$ is a slightly degenerate category of dimension 
dividing $2q^2r^2$ with no invertible objects outside of ${\rm
SuperVec}$. By the above argument, either this category must contain a
simple object of dimension $q$ or $r$, in which case we are done
by Proposition \ref{almnontan}, or $\B'={\rm SuperVec}$. In the
latter case, $\Z(\C)=\B\boxtimes \hat \B$, where $\hat \B$ is a
4-dimensional integral non-degenerate braided category, hence $\Z(\C)$ is
pointed and there is nothing to prove. The theorem is proved.  
\end{proof}

\begin{corollary}\label{Hopfal}
Let $H$ be a semisimple Hopf algebra of dimension
$pqr$, where $p<q<r$ are primes. Then there exists 
a finite group $G$ of order $pqr$ and an exact factorization 
$G=KL$ of $G$ into a product of subgroups, such that 
$H$ is the split abelian extension $H(G,K,L,1,1,1)=
\Bbb C[K]\ltimes {\rm Fun}(L)$ associated
to this factorization. 
\end{corollary}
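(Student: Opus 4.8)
The plan is to combine Theorem~\ref{pqr} with the theory of abelian extensions recalled above. Since $H$ is a semisimple Hopf algebra with $\dim H=pqr$, the category $\Rep(H)$ is an integral fusion category of Frobenius--Perron dimension $pqr$, and the forgetful functor $\Rep(H)\to\Vec$ is a fiber functor. By Theorem~\ref{pqr}, $\Rep(H)$ is group-theoretical, hence equivalent to $\Vec_{G,\omega,K,\psi}$ for some finite group $G$, a $3$-cocycle $\omega$, a subgroup $K\subset G$, and a $2$-cochain $\psi$ with $d\psi=\omega|_K$; comparing Frobenius--Perron dimensions gives $|G|=pqr$. The existence of a fiber functor on $\Vec_{G,\omega,K,\psi}$ corresponds to an indecomposable module category with a single simple object, i.e.\ to a subgroup $L\subset G$ and a cochain $\phi$ with $d\phi=\omega|_L$ such that $G=KL$ is an exact factorization ($K\cap L=1$). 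Thus $H$ is the abelian extension (Kac algebra) $H(G,K,L,\omega,\psi,\phi)$, and $|K||L|=pqr$ with $|K|$ and $|L|$ coprime, being complementary divisors of a squarefree integer.

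It then remains to prove that this extension splits, i.e.\ that $H\cong\CC[K]\ltimes\Fun(L)=H(G,K,L,1,1,1)$. I would do this by showing that the group $\mathrm{Opext}(\CC[K],\Fun(L))$ classifying abelian extensions attached to the matched pair $(K,L)$ is trivial, so that the split extension is the only one. For this I invoke the Kac exact sequence, whose relevant segment reads
\[
H^2(K,\CC^*)\oplus H^2(L,\CC^*)\longrightarrow \mathrm{Opext}(\CC[K],\Fun(L))\longrightarrow H^3(G,\CC^*)\xrightarrow{\ \mathrm{res}\ } H^3(K,\CC^*)\oplus H^3(L,\CC^*),
\]
where $\mathrm{res}$ is restriction to the subgroups. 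It suffices to show that the two contributions to $\mathrm{Opext}$ vanish: the image of $H^2(K,\CC^*)\oplus H^2(L,\CC^*)$ and the kernel of $\mathrm{res}$ on $H^3$.

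Both vanishings come from the squarefree hypothesis via Sylow theory. First, $K$ and $L$ have squarefree order, so all of their Sylow subgroups are cyclic; a finite group whose Sylow subgroups are all cyclic has trivial Schur multiplier, whence $H^2(K,\CC^*)=H^2(L,\CC^*)=0$ and the left-hand term drops out. Second, since $|K|$ and $|L|$ are coprime with product $pqr$, for each prime $\ell\in\{p,q,r\}$ exactly one of $K,L$ contains a Sylow $\ell$-subgroup of $G$ (necessarily of order $\ell$). As restriction to a Sylow $\ell$-subgroup is injective on the $\ell$-primary component of cohomology, the map $\mathrm{res}\colon H^3(G,\CC^*)\to H^3(K,\CC^*)\oplus H^3(L,\CC^*)$ is injective on each primary component, hence injective. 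Therefore $\mathrm{Opext}(\CC[K],\Fun(L))=0$, the extension is split, and $H\cong\CC[K]\ltimes\Fun(L)$.

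The main obstacle I anticipate is not the cohomological vanishing, which is a clean Sylow-theoretic consequence of $pqr$ being squarefree, but rather the careful identification of $H$ as a genuine abelian extension for an exact factorization $G=KL$: one must pass correctly from group-theoreticity of $\Rep(H)$ together with the presence of a fiber functor to the matched-pair data $(G,K,L,\omega,\psi,\phi)$, and check that the class controlled by the Kac exact sequence is exactly the obstruction to splitting. Once this dictionary is in place, the conclusion---including the final assertion that $G$ has order $pqr$---follows as above.
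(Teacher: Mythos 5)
Your reduction via Theorem~\ref{pqr} is correct, and the second half of your argument (killing $\mathrm{Opext}(\CC[K],\Fun(L))$ with the Kac exact sequence) is sound; but the bridge between them has a genuine gap, and it sits exactly where you yourself flagged the ``main obstacle.'' The claim that a fiber functor on $\Vec_{G,\omega,K,\psi}$ corresponds to a pair $(L,\phi)$ with $G=KL$ an \emph{exact} factorization is false as a general statement. A module category with one simple object gives a subgroup $L$ with $G=KL$ together with $\phi$ such that the induced $2$-cocycle on $K\cap L$ is \emph{non-degenerate}; the condition $K\cap L=1$ is not automatic. For instance, $\Rep((\Bbb Z/q\Bbb Z)^2)$ is group-theoretical with $K=G=(\Bbb Z/q\Bbb Z)^2$, and the fiber functor attached to a non-degenerate $2$-cocycle on $G$ (the one reconstructing the twisted group algebra $\CC[G]_J$) has $L=G$, so $K\cap L=G\neq 1$. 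Thus the matched pair $(K,L)$ on which your entire Kac-sequence computation is predicated has not yet been produced. The missing step is precisely where the square-free hypothesis must be spent once more: the module category defining the fiber functor is the category of projective representations of $K\cap L$ with a certain $2$-cocycle; since $K\cap L$ has square-free order, its Sylow subgroups are cyclic and its Schur multiplier vanishes, so this cocycle is cohomologically trivial, and the category can have a single simple object only if $K\cap L=1$. This is exactly how the paper argues (Lemma~\ref{gt}); once it is inserted, your proof goes through, including the identification $|G|=pqr$.

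With that repair made, your route to splitness differs from the paper's in packaging rather than substance. The paper trivializes the categorical data directly: $[\omega]$ restricts trivially to both $K$ and $L$, hence is trivial (restriction to Sylow subgroups being injective on primary components); then $\psi$ and $\phi$ become $2$-cocycles on groups with trivial Schur multiplier, hence may be taken to be $1$; finally, since a fusion category together with a fiber functor determines the Hopf algebra, $H\cong H(G,K,L,1,1,1)$. You feed the same two vanishing statements, $H^2(K,\CC^*)=H^2(L,\CC^*)=0$ and injectivity of $H^3(G,\CC^*)\to H^3(K,\CC^*)\oplus H^3(L,\CC^*)$, into the Kac exact sequence instead. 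The paper's version avoids having to verify that the equivalence class of $H$ as an extension is exactly the element controlled by that sequence (equivalent extensions are isomorphic Hopf algebras, so this is standard, but it is a point you correctly note must be checked); your version, once that identification is accepted, yields the marginally stronger conclusion that \emph{every} abelian extension attached to this matched pair is split.
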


\begin{proof}
By Theorem \ref{pqr}, $H$ is group-theoretical.
Thus the result follows from the following lemma. 

\begin{lemma}\label{gt}
Let $H$ be a group-theoretical semisimple Hopf algebra 
of square-free dimension. Then $H$ a split abelian extension 
of the form $H(G,K,L,1,1,1)$. 
\end{lemma}

\begin{proof} Since $H$ is group theoretical, 
there exists a group $G$ and a 
cocycle $\omega\in Z^3(G,\Bbb C^*)$ such that 
$\Rep(H)$ is the group-theoretical category 
$\Vec_{G,\omega,K,\psi}$, of $(K,\psi)$-biequivariant
$(G,\omega)$-graded vector spaces (here $\psi$ is a
2-cochain on $K$ such that $d\psi=\omega|_K$). 
The fiber functor on $\Rep(H)$ corresponds to a module 
category $\M$ over $\Vec_{G,\omega,K,\psi}$ with 
one simple object. It is the category 
of $(G,\omega)$-graded vector spaces which are left-equivariant 
under $(K,\psi)$ and right equivariant under 
$(L,\phi)$, for another subgroup $L\subset G$ and 2-cochain
$\phi$ on $L$ such that $d\phi=\omega|_L$. 
The condition of having one simple object
implies that $KL=G$. Moreover, $\M$ is the category 
of projective representations of the group $K\cap L$ with
a certain 2-cocycle. But the group $K\cap L$ has square free
order, so its Sylow subgroups are cyclic, and thus 
this 2-cocycle must be trivial. So the one simple object condition
implies that $K\cap L=1$, so $G=KL$ is an exact factorization. 
Also, since $[\omega]|_K=[\omega]|_L=1$, we find that $\omega$
represents the trivial cohomology class. Finally, if we choose
$\omega=1$, then $\psi$ and $\phi$ are coboundaries, so we can
choose $\psi=1,\phi=1$. 

Thus, we have shown that both the category $\Rep(H)$ and the fiber functor on it
attached to $H$ are the same as those for $H(G,K,L,1,1,1)$.
This implies that $H=H(G,K,L,1,1,1)$, 
as desired.       
\end{proof}

\end{proof}

\subsection{Classification of semisimple Hopf algebras of dimension $pq^2$}

In this section we classify semisimple Hopf algebras of dimension
$pq^2$, generalizing the results of \cite{G,N4,N5}. 

Let $p,\, q$  be distinct primes. 

\begin{proposition} 
\label{pq2}(\cite{JL})
Every semisimple Hopf algebra $H$ of dimension $pq^2$ is group-theoretical.
\end{proposition}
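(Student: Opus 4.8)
The plan is to show that $\C:=\Rep(H)$ --- an integral fusion category of Frobenius--Perron dimension $pq^2$ admitting a fiber functor --- is group-theoretical. By Theorem~\ref{Th2} we already know $\C$ is solvable, so the entire difficulty lies in upgrading solvability to group-theoreticity; this is a genuine strengthening, since solvable categories need not be group-theoretical (see Remark~\ref{MW solvable}). Two facts I would isolate at the outset are: (a) group-theoreticity is a Morita invariant, and the \emph{equivariantization of a pointed category by a finite group is group-theoretical} --- indeed, if $\mathcal P$ is pointed then the crossed product $\mathcal P\rtimes G=\mathcal P\boxtimes \Vec_G$ has only invertible simple objects and so is again pointed, and by Proposition~\ref{equiv} the equivariantization $\mathcal P^G$ is Morita equivalent to $\mathcal P\rtimes G$; and (b) any fusion category of dimension $p$ or $p^2$ is pointed (see \cite{ENO}), so in particular a category of dimension $q^2$ is pointed. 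Thus it suffices to realize $\C$, up to Morita equivalence, either as a pointed category or as a $G$-equivariantization of a pointed category.

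First I would carry out the elementary dimension analysis. Since $\C$ is solvable it is weakly group-theoretical, so by Theorem~\ref{Th1} every simple object has dimension dividing $pq^2$; combined with $\sum_i d_i^2=pq^2$ and the exclusion of $d=pq,pq^2$ (whose squares exceed $pq^2$), the simple dimensions lie in $\{1,q\}$ when $p<q^2$ and in $\{1,q,q^2\}$ when $p>q^2$. If every simple object is invertible then $\C$ is pointed and we are done, so I assume there is a noninvertible simple object of dimension a power of $q$.

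Next I would pass to the modular center $\Z(\C)$ (of dimension $p^2q^4$) and show it contains a nontrivial Tannakian subcategory, following the template of Lemma~\ref{symsub} and Theorem~\ref{pqr}: when $\Z(\C)$ has a simple object of prime-power dimension this is immediate from Corollary~\ref{primpow} and Deligne's Theorem~\ref{DeT}, and otherwise one runs the $S$-matrix orthogonality argument directly on $\Z(\C)$, using Theorem~\ref{divis} to constrain the simple dimensions and a Galois-theoretic estimate as in Lemma~\ref{symsub} to force a nontrivial symmetric subcategory (removing a copy of ${\rm SuperVec}$ via Proposition~\ref{almnon}(ii) and Proposition~\ref{almnontan} if necessary to obtain a genuine $\Rep(G)$). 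The decisive dichotomy is then whether $\Rep(G)$ embeds into $\C$ under the forgetful functor $\Z(\C)\to\C$ or maps to $\Vec$. In the embedding case Proposition~\ref{eqext2}(i) exhibits $\C$ as a $G$-equivariantization of a fusion category $\mathcal D$ of dimension $pq^2/|G|$; when $|G|\in\{p,q^2\}$ this forces $\mathcal D$ to have dimension $q^2$ or $p$ and hence to be pointed, so $\C$ is an equivariantization of a pointed category and we conclude by (a).

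The main obstacle is the remaining configuration, in which the symmetric subcategory only yields a \emph{grading} of $\C$ (i.e.\ $\Rep(G)$ maps to $\Vec$, so by Proposition~\ref{eqext1}(i) $\C$ is merely a $G$-extension), together with the borderline embedding case $|G|=q$, whose de-equivariantization has dimension $pq$ and need not be pointed (e.g.\ $\Rep$ of a nonabelian group of order $pq$). Here solvability alone does not suffice, since group-theoreticity is not preserved under extensions. One must instead use the explicit structure available in dimension $pq^2$ --- analyzing the universal grading and the pointed subcategory $\C_{pt}$ --- and show that after replacing $\C$ by a Morita-equivalent category (via Theorem~\ref{Morita eq iff} and Theorem~\ref{Th0}) the extension can be re-presented as an equivariantization of a pointed category, thereby ruling out the genuinely non-group-theoretical solvable configurations. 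Completing this case analysis, in agreement with the classification carried out in \cite{JL}, is the heart of the argument.
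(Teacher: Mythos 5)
Your setup is fine, but the proof is incomplete exactly where the proposition has content. The two configurations you label ``the main obstacle'' --- the extension case, where $\Rep(G)\subset\Z(\C)$ maps to $\Vec$, and the equivariantization case with $|G|=q$ and non-pointed de-equivariantization of dimension $pq$ --- are left unresolved: you appeal to ``the explicit structure available in dimension $pq^2$'' and to \cite{JL}, which is not an argument. Worse, the strategy you sketch for these cases (analyzing the universal grading and the pointed subcategory of a Morita-equivalent category) cannot work even in principle, because it never uses the fiber functor: there exist \emph{integral} fusion categories of dimension $pq^2$ that are not group-theoretical, namely certain Tambara--Yamagami categories with $p=2$, which are precisely $\Bbb Z/2\Bbb Z$-extensions of pointed categories of dimension $q^2$ (see \cite{TY,ENO} and the remark at the end of this subsection of the paper). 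So the extension case is genuinely false at the level of fusion categories, and any correct proof must invoke the Hopf structure of $H$ there.

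That is what the paper's proof does, and these are the ideas missing from your outline. In the extension case, a faithful grading of $\Rep(H)$ by a cyclic group of prime order corresponds to a \emph{central Hopf subalgebra} $K\subset H$, yielding a Hopf algebra extension $\Bbb C\to K\to H\to L\to \Bbb C$ with $\dim L\in\{q^2,pq\}$. If $L$ is cocommutative, then $H$ is an abelian extension (a Kac algebra), and such Hopf algebras are group-theoretical by \cite{N1}; if $L$ is not cocommutative, then $\dim L=pq$ and $L$ is commutative by \cite{EG2}, so the trivial component of the grading is pointed, and since $pq$ is square-free all simples of $\Rep(H)$ are then forced to be invertible, i.e.\ $\Rep(H)$ is pointed. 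In the troublesome equivariantization case, the paper identifies the de-equivariantization with $\Rep(F)$ for a nonabelian group $F$ of order $pq$ via \cite{EGO}, notes that $\Rep(F)$ has a \emph{unique} fiber functor (no nontrivial subgroup of square-free order carries a nondegenerate $2$-cocycle), hence a $G$-invariant one whose dual category is pointed, and concludes by the criterion of \cite[Corollary 3.6]{Nk2}. Your observation (a) is correct and nicely handles the cases where the de-equivariantization is pointed, and your reduction via Theorem~\ref{Th2} is sound, though the detour through Tannakian subcategories of $\Z(\C)$ is redundant (Proposition~\ref{basicp2} already presents $\Rep(H)$ as an extension or an equivariantization by a cyclic group of prime order); also your dimension list for $p<q^2$ wrongly omits $p$ (e.g.\ $\Rep(A_4)$, with $p=3$, $q=2$, has a $3$-dimensional simple). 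But without the central-Hopf-subalgebra/Kac-algebra argument and the invariant-fiber-functor argument, the proof does not go through.
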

\begin{proof}
By Theorem~\ref{Th2} $\Rep(H)$ is either an extension or an equivariantization
of a fusion category of smaller dimension. 

Suppose  $\Rep(H)$ is an extension.  Then  $H$ contains a
central Hopf subalgebra $K$ of prime dimension,
and therefore it is an extension of the form
\[
\Bbb C \to K \to H \to L \to \Bbb C,
\]
where $L$ is a Hopf algebra with $\dim L$ being a product 
of two primes.
If $L$ is cocommutative then $H$ is a Kac algebra, hence 
group-theoretical (\cite{N1}).
Otherwise, $\dim L =pq$, and $L$ must be commutative by  \cite{EG2},
so the trivial component of $\Rep(H)$ is pointed of dimension $pq$,
hence $\Rep(H)$ must be pointed. 
 
Suppose now that $\Rep(H)$ is an equivariantization, i.e.,  $\Rep(H) = \C^G$
for a cyclic group $G$ of prime order. By \cite[Corollary 3.6]{Nk2} $\C^G$
is group-theoretical if and only if there is a $G$-invariant indecomposable 
$\C$-module category $\M$ such that the dual category $\C^*_\M$ is pointed.
Clearly, such a category always exists if $\C$ itself 
is pointed (take $\M =\C$).
By \cite{EGO}, the only non-pointed possibility 
for $\C$ is the representation category of
a non-commutative group algebra of dimension $pq$. But this category has
a unique (and hence $G$-invariant) fiber functor. The dual with respect
to this functor is pointed, and so $\Rep(H)$ is group-theoretical. 
\end{proof}

\begin{corollary}
A semisimple Hopf algebra of dimension $pq^2$ is either a Kac algebra, 
or a twisted group algebra (by a twist corresponding to 
the subgroup $(\Bbb Z/q\Bbb Z)^2$), or the dual of a twisted
group algebra. 
\end{corollary}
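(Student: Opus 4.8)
The plan is to classify semisimple Hopf algebras $H$ of dimension $pq^2$ by combining Proposition~\ref{pq2} (which guarantees $H$ is group-theoretical) with the structural dichotomy that emerged in its proof, namely that $\Rep(H)$ is either an extension or an equivariantization of a smaller fusion category. First I would observe that since $H$ is group-theoretical, $\Rep(H) \cong \Vec_{G,\omega,K,\psi}$ for some finite group $G$ of order $pq^2$ equipped with a 3-cocycle and suitable data; I then want to read off the three possible shapes of $H$ from the three cases that appeared in the proof of Proposition~\ref{pq2}.

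Next I would organize the argument around the two branches of the dichotomy. In the extension case, the proof of Proposition~\ref{pq2} showed that either $H$ is a Kac algebra (when the quotient $L$ is cocommutative), or $\Rep(H)$ is pointed (when $\dim L = pq$ forces $L$ commutative). A pointed fusion category of this dimension corresponds to a twisted group algebra, so I would identify this sub-case with the twisted group algebra (or its dual) associated to the relevant group containing $(\Bbb Z/q\Bbb Z)^2$. In the equivariantization case, $\Rep(H) = \C^G$ with $G$ cyclic of prime order, and the proof showed $\C$ is either pointed or the representation category of a non-commutative group of dimension $pq$; in both situations the category is group-theoretical with a canonical fiber functor, yielding again either a Kac algebra or a (dual of a) twisted group algebra.

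The main step is therefore to match each categorical outcome with the precise Hopf-algebraic description in the corollary's statement: Kac algebra, twisted group algebra by a twist coming from the $(\Bbb Z/q\Bbb Z)^2$ subgroup, or the dual of such a twisted group algebra. I would argue that the twist is supported on $(\Bbb Z/q\Bbb Z)^2$ precisely because the nontrivial cohomology responsible for a non-trivial twist must live in the Sylow $q$-subgroup (the $p$-part contributes nothing by the square-free-in-$p$ structure and the triviality of the relevant cohomology, as in the analysis of Lemma~\ref{gt}).

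I expect the main obstacle to be the careful bookkeeping of \emph{which} group $G$ of order $pq^2$ arises and \emph{which} of the three Hopf-algebra types corresponds to each factorization and cocycle datum, together with verifying that self-duality and the dual-twist descriptions are exhaustive and non-overlapping. In particular, pinning down that the only non-Kac possibilities are genuinely twisted group algebras on a group containing $(\Bbb Z/q\Bbb Z)^2$ (as opposed to some other twist) will require invoking the classification of groups of order $pq^2$ and the vanishing of $H^2$ with $\Bbb C^*$-coefficients on cyclic Sylow subgroups, exactly as in the square-free case handled by Lemma~\ref{gt}.
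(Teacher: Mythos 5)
There is a genuine gap here, and it is structural. The corollary classifies Hopf algebras, and a semisimple Hopf algebra is determined by the pair consisting of its representation category \emph{and} its fiber functor; the extension/equivariantization dichotomy you want to read the answer from sees only the tensor category $\Rep(H)$. That dichotomy is therefore blind to exactly the distinctions the corollary makes: $\Bbb C[G]$ and a nontrivially twisted $\Bbb C[G]_J$ have tensor-equivalent representation categories, yet one is a Kac algebra and the other in general is not an abelian extension at all. Your explicit case-matching also contains an error: if $\Rep(H)$ is pointed, then all irreducible representations of $H$ are one-dimensional, so $H$ is commutative, hence $H\cong\Fun(\Gamma)$ is the dual of an \emph{untwisted} group algebra --- a Kac algebra --- not a twisted group algebra. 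Nontrivially twisted group algebras of dimension $pq^2$ have $\Rep(H)\cong \Rep(G)$ with $G$ nonabelian (twisting $\Bbb C[G]$ for abelian $G$ changes nothing, since conjugation by $J$ in a commutative algebra is trivial), and such a category is never pointed; what makes $H$ a twisted group algebra is the fiber functor, which your plan never pins down. Similarly, in your equivariantization branch it does not suffice that the category admits ``a canonical fiber functor'': the forgetful functor of $H$ may be a different one, and the corollary is precisely about which fiber functors can occur.

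The paper's proof consists exactly of the step you postpone as ``bookkeeping.'' As in Lemma~\ref{gt}, one writes $\Rep(H)=\Vec_{G,\omega,K,\psi}$ and realizes the fiber functor by the module category attached to $(L,\phi)$; this module category is the category of projective representations of $K\cap L$ with a certain $2$-cocycle, and having one simple object means that $KL=G$ and that this $2$-cocycle is non-degenerate. If $K\cap L=1$, one has an exact factorization and $H$ is a Kac algebra, as in Lemma~\ref{gt}. If $K\cap L\neq 1$, a group admitting a non-degenerate $2$-cocycle has square order and cannot be cyclic, so $K\cap L\cong(\Bbb Z/q\Bbb Z)^2$; then $|K|\,|L|=|G|\,|K\cap L|=pq^4$ forces one of $K,L$ to equal $(\Bbb Z/q\Bbb Z)^2$ and the other to be all of $G$, and these two cases give a twisted group algebra with twist supported on $(\Bbb Z/q\Bbb Z)^2$ and the dual of such, respectively. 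Your final paragraph names some of the right ingredients (non-degenerate cocycles, vanishing of $H^2$ on cyclic groups), but without the argument forcing $\{K,L\}=\{G,(\Bbb Z/q\Bbb Z)^2\}$ the trichotomy does not follow, and the category-level dichotomy on which you build the proof cannot supply it.
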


\begin{proof}
The situation is the same as in the proof of 
Lemma \ref{gt}, except that now the group $K\cap L$ 
does not have to be trivial. The condition on this group is that 
it must have a non-degenerate 2-cocycle. The only case when this
group can be nontrivial is when $K=G$, $L=(\Bbb Z/q\Bbb Z)^2$, 
or $L=G$, $K=(\Bbb Z/q\Bbb Z)^2$, which implies the statement.  
\end{proof}

\begin{remark}
1. There exist integral fusion categories of dimension $pq^2$
which are {\em not} group-theoretical, e.g., certain
Tambara-Yamagami categories \cite{TY, ENO}. 
By Proposition~\ref{pq2} they are not equivalent to
representation categories of
semisimple Hopf algebras. A classification of 
such categories and another proof of 
Proposition \ref{pq2} based on this classification will appear in
\cite{JL}.   
\end{remark}

\subsection{Semisimple quasi-Hopf algebras of dimension $p^rq^s$}

\begin{proposition}\label{gplike} Any semisimple quasi-Hopf
(in particular, Hopf) algebra of dimension $p^rq^s>1$ 
(where $p,q$ are primes) has a nontrivial 1-dimensional
representation. Therefore, any semisimple Hopf
algebra of dimension $p^rq^s>1$ has a nontrivial group-like
element. 
\end{proposition}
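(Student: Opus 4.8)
The plan is to reduce the statement to a fact about fusion categories, namely that any integral fusion category of dimension $p^rq^s>1$ contains a nontrivial invertible object. For a semisimple quasi-Hopf algebra $H$, the representation category $\Rep(H)$ is an integral fusion category whose dimension equals $\dim H=p^rq^s$, and $1$-dimensional representations of $H$ are precisely invertible objects of $\Rep(H)$. In the Hopf case, $1$-dimensional representations are the same as group-like elements of the dual $H^*$, but more to the point a nontrivial $1$-dimensional representation of a Hopf algebra gives a nontrivial group-like element (in the dual, or equivalently via duality a nontrivial group-like element of $H$ itself once one passes to $H^*$, whose dimension is also $p^rq^s$). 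So everything comes down to producing a nontrivial invertible object in an integral fusion category $\C$ of dimension $p^rq^s>1$.

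To produce such an invertible object, I would pass to the Drinfeld center. By Theorem~\ref{Th2}, $\C$ is solvable, and by Proposition~\ref{basicp2-bis}(iv) a solvable fusion category $\C\neq \Vec$ contains a nontrivial invertible object. This is the cleanest route: Theorem~\ref{Th2} applies verbatim since $\FPdim(\C)=p^rq^s$, and the invertible-object property is exactly the content of part (iv) of the basic properties of solvable categories. Thus the proof is essentially a two-line deduction from results already proved in the excerpt, once the translation between algebra-side and category-side notions is spelled out.

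The one step requiring care is the translation in the Hopf case from ``nontrivial $1$-dimensional representation'' to ``nontrivial group-like element.'' For a semisimple Hopf algebra $H$, one applies the first assertion to obtain a nontrivial $1$-dimensional representation, i.e.\ a nontrivial algebra homomorphism $H\to\CC$; this is by definition a nontrivial group-like element of $H^*$. Since $\dim H^*=\dim H=p^rq^s$, one may instead apply the whole argument directly to $H^*$ (which is again a semisimple Hopf algebra of the same dimension) to get a nontrivial group-like element of $H$. I expect the main obstacle to be purely expository: making sure the quasi-Hopf statement (which only yields a $1$-dimensional representation, as there is no honest dual algebra in general) and the Hopf statement (which yields a genuine group-like element) are cleanly separated, and that the reader sees the dimension count $\dim H^*=\dim H$ licensing the passage to the dual.

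\begin{proof}
The category $\C=\Rep(H)$ is an integral fusion category with $\FPdim(\C)=\dim H=p^rq^s>1$. By Theorem~\ref{Th2}, $\C$ is solvable, and hence by Proposition~\ref{basicp2-bis}(iv) it contains a nontrivial invertible object. A nontrivial invertible object of $\Rep(H)$ is precisely a nontrivial $1$-dimensional representation of $H$, which proves the first claim. If $H$ is an honest Hopf algebra, then $H^*$ is a semisimple Hopf algebra of the same dimension $p^rq^s$, so by the first claim $H^*$ has a nontrivial $1$-dimensional representation, i.e.\ a nontrivial algebra homomorphism $H^*\to\CC$; this is by definition a nontrivial group-like element of $H$.
\end{proof}
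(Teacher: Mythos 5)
Your proof is correct and follows exactly the paper's own argument: Theorem~\ref{Th2} plus Proposition~\ref{basicp2-bis}(iv) give the nontrivial invertible object in $\Rep(H)$, and the group-like statement for Hopf algebras is obtained by applying the first claim to the dual $H^*$. The paper's proof is just a terser version of the same two-step deduction.
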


\begin{proof}
The first statement follows from Theorem \ref{Th2} and
Proposition \ref{basicp2-bis}. The second statement follows from
the first one by taking the dual.  
\end{proof}

\subsection{Simple fusion categories} 

\begin{definition}
A fusion category is called {\em simple} if it has no non-trivial
proper fusion subcategories.
\end{definition}

Clearly, a pointed fusion category is simple iff it is equivalent
to $\Vec_{G,\omega}$ for a cyclic group $G$ of prime order. 

\begin{proposition} 
If $\C$ is a weakly group-theoretical simple fusion category
which is not pointed, 
then $\C=\Rep(G)$, where $G$ is a non-abelian finite simple group.
\end{proposition}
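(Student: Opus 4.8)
The statement to prove is: a weakly group-theoretical simple fusion category $\C$ that is not pointed must be $\Rep(G)$ for a non-abelian finite simple group $G$. The plan is to exploit the weakly group-theoretical hypothesis through Proposition~\ref{basicp1}, which says $\C$ is weakly group-theoretical iff $\Z(\C)$ can be built up by a chain of de-equivariantizations starting from $\Vec$. The simplicity of $\C$ should force this chain to degenerate in a very rigid way. First I would observe that, since $\C$ is simple, its universal grading group $U_\C$ must be trivial (otherwise the trivial component $\C_{\rm ad}$ would be a proper nontrivial subcategory, unless $\C_{\rm ad}=\Vec$, in which case $\C$ is nilpotent hence pointed). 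So $\C$ is its own adjoint category, $\C=\C_{\rm ad}$, meaning $\C$ admits no nontrivial faithful grading.

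\textbf{Key steps.} The crux is to produce a nontrivial Tannakian subcategory $\Rep(H)\subset\Z(\C)$ and then analyze whether it maps into $\C$ or to $\Vec$ under the forgetful functor. By Proposition~\ref{basicp1}, since $\C\neq\Vec$ is weakly group-theoretical, $\Z(\C)$ genuinely contains some nontrivial Tannakian $\Rep(H)$ as an isotropic subcategory at the top of the chain. Now I would invoke the dichotomy from Propositions~\ref{eqext1}(i) and \ref{eqext2}(i): taking $H$ to contain a cyclic subgroup of prime order, the restriction $\Rep(\Bbb Z/p)$ either maps to $\Vec$ (forcing $\C$ to be a $\Bbb Z/p$-extension, contradicting that $\C$ has no nontrivial grading) or embeds into $\C$ via the forgetful functor (forcing $\C$ to be a $\Bbb Z/p$-equivariantization $\D^{\Bbb Z/p}$). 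The first case is excluded by simplicity, so $\C=\D^{\Bbb Z/p}$ for some $\D$. But then $\Rep(\Bbb Z/p)\subset\C$ is a nontrivial proper fusion subcategory unless $\Rep(\Bbb Z/p)=\C$, i.e. $\C$ is pointed — excluded. The only escape is $\D=\Vec$ at the relevant stage, giving $\C=\Vec^{G'}=\Rep(G')$ directly. So the whole analysis should collapse to showing $\C=\Rep(G)$ for some finite group $G$.

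\textbf{Identifying $G$.} Once $\C\cong\Rep(G)$, the fusion subcategories of $\Rep(G)$ are exactly the categories $\Rep(G/N)$ for normal subgroups $N\trianglelefteq G$. Simplicity of $\C$ then translates into: $G$ has no nontrivial proper normal subgroups, i.e. $G$ is a simple group (as an abstract group). Since $\C$ is not pointed, $\Rep(G)$ has an irreducible representation of dimension $>1$, so $G$ is non-abelian. Thus $G$ is a non-abelian finite simple group, as claimed.

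\textbf{Main obstacle.} The delicate point is pinning down that the equivariantization case really forces $\C$ to be of the form $\Rep(G)$ with the equivariantized base being $\Vec$, rather than some more exotic $\D^{\Bbb Z/p}$ with $\D$ nonpointed. The argument is that any $\Rep(\Bbb Z/p)\hookrightarrow\C$ with $p$ prime generates a proper nontrivial fusion subcategory of $\C$ unless it exhausts $\C$; simplicity forbids proper nontrivial subcategories, so either $\C$ is pointed (the $\Rep(\Bbb Z/p)=\C$ case) or the equivariantization structure degenerates so that $\C$ itself is a representation category of a group. Making this last reduction airtight — tracking the forgetful image of the full Tannakian subcategory $\Rep(H)$ and showing it can be taken so that $\C=\Rep(H/N)$ with $\C=\C_{\rm ad}$ — is where I expect the real work to lie, and I would handle it by combining the no-nontrivial-grading conclusion ($U_\C=1$) with the equivariantization dichotomy to rule out every alternative.
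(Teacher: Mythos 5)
Your reduction to a prime-order Tannakian subcategory is the step that fails, and it cannot be repaired. First, a copy of $\Rep(\mathbb{Z}/p)$ inside $\Rep(H)$ corresponds to a \emph{normal subgroup of $H$ of index $p$} (fusion subcategories of $\Rep(H)$ are exactly the $\Rep(H/N)$), not to a cyclic subgroup of $H$; and such a quotient need not exist, e.g.\ when $H$ is perfect. Worse, under your hypotheses it provably never exists: if $\Rep(\mathbb{Z}/p)\subset\Z(\C)$ with $\C$ simple, then by your own dichotomy either $\C$ is a faithful $\mathbb{Z}/p$-extension, whence its trivial component is a proper subcategory, hence $\Vec$, and $\C$ is pointed; or $\Rep(\mathbb{Z}/p)$ embeds in $\C$, whence $\C=\Rep(\mathbb{Z}/p)$ by simplicity and $\C$ is again pointed. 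So for a non-pointed simple $\C$ the object you want to work with does not exist -- consistent with the fact that both branches of your dichotomy end in contradictions, so your argument, if valid, would ``prove'' that there is no non-pointed weakly group-theoretical simple category at all, contradicting $\Rep(A_5)$. The ``escape'' you invoke ($\D=\Vec$, so $\C=\Vec^{G'}=\Rep(G')$) is not a branch of the dichotomy; it is precisely the unproved assertion, as you in effect concede in your final paragraph.

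The repair is to abandon the prime-order reduction and run the dichotomy on the \emph{image} of the whole nontrivial Tannakian subcategory $\E=\Rep(H)\subset\Z(\C)$, which is what the paper does. Since $\E$ is isotropic in the non-degenerate category $\Z(\C)$, one has $|H|\le\FPdim(\C)$. The restriction of the forgetful functor $F:\Z(\C)\to\C$ to $\E$ need be neither trivial nor an embedding, but its image is a fusion subcategory of $\C$, so by simplicity it is either $\Vec$ -- giving a faithful grading, hence $\C$ pointed, which is excluded -- or all of $\C$. In the latter case $F|_\E:\Rep(H)\to\C$ is a surjective tensor functor, so $\FPdim(\C)\le|H|$; combined with $|H|\le\FPdim(\C)$ this forces equality, and a surjective tensor functor between fusion categories of equal Frobenius-Perron dimension is an equivalence. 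Thus $\C\cong\Rep(H)$, and your final identification step (fusion subcategories of $\Rep(H)$ versus normal subgroups, and non-pointed forcing $H$ non-abelian) then applies verbatim; that part of your write-up is correct and matches what the paper leaves implicit.
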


\begin{proof} Consider the  center $\Z(\C)$. It contains a nontrivial
Tannakian subcategory $\Rep(G)$ with $|G|\le \FPdim(\C)$ 
that maps to $\C$. If it maps to $\Vec$,
we get a $G$-grading on $\C$, and we are done. Otherwise, the image of
$\Rep(G)$ in $\C$ is a nontrivial fusion 
subcategory. So it must be the whole $\C$, and by
dimension argument the functor $\Rep(G)\to \C$ is an equivalence, so
we are done.
\end{proof} 

\subsection{Simple categories of dimension 60}

By Theorem~\ref{Th2} and Theorem \ref{pqr}, 
all weakly integral fusion categories of dimension $ <60$
are solvable. Thus, the only simple weakly 
integral fusion categories of dimension $ <60$ 
are the categories $\Vec_{G,\omega}$ for cyclic groups $G$ 
of prime order.

The goal of this subsection
is to prove the following result:

\begin{theorem}\label{60}
Let $\C$ be a simple fusion category of dimension $60$.  Then
$\C \cong \Rep(A_5)$, where $A_5$ is the alternating group of order 60.
\end{theorem}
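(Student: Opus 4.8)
The plan is to show that $\C$ is weakly group-theoretical and then to invoke the earlier structure result for simple weakly group-theoretical categories.

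\textbf{Reductions.} Since $60$ is not prime, $\C$ cannot be a simple pointed category, so $\C$ is not pointed; hence the adjoint subcategory $\C_{\rm ad}$ is not $\Vec$, and simplicity forces $\C_{\rm ad}=\C$, i.e. the universal grading group $U_\C$ is trivial. Thus $\C$ admits no nontrivial faithful grading. As in the proof of Theorem~\ref{Th2}, a weakly integral category that is not integral carries a nontrivial faithful $\Bbb Z/2\Bbb Z$-grading; since $\C$ has none, $\C$ is integral. For the same reason the fusion subcategory generated by the invertible objects must be $\Vec$, so $\be$ is the only invertible object. Writing $\sum_i d_i^2=60$ with $d_0=1$ and using that each nontrivial $d_i$ is a divisor of $60$ with $d_i^2\le 59$, a short check (the value $6$ is excluded, since $60-1-36=23$ is not a sum of squares drawn from $\{4,9,16,25\}$) shows that every nontrivial simple object of $\C$ has dimension in $\{2,3,4,5\}$, in particular a prime power.

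\textbf{Reduction to the center.} Exactly as in the proof of Theorem~\ref{pqr}, it suffices to prove that $\Z(\C)$ contains a nontrivial Tannakian subcategory: granting this, Proposition~\ref{basicp1} together with Theorem~\ref{Th2} shows $\C$ to be weakly group-theoretical. Here $\Z(\C)$ is integral and non-degenerate of dimension $3600=2^4 3^2 5^2$. De-equivariantizing $\Z(\C)$ by a Tannakian subcategory yields a non-degenerate braided category of strictly smaller, square dimension; iterating, one reaches dimension a product of two prime powers (e.g. $2^4 5^2$ or $2^4 3^2$), where Theorem~\ref{Th2} applies, and the induction on dimension terminates.

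\textbf{The main step.} The heart of the argument, and the main obstacle, is to exhibit a nontrivial symmetric subcategory inside $\Z(\C)$; I would follow the pattern of Lemma~\ref{symsub} and Proposition~\ref{p2}. If $\Z(\C)$ has a simple object of prime power dimension, Corollary~\ref{primpow} immediately produces a nontrivial symmetric subcategory. Otherwise every nontrivial simple object of $\Z(\C)$ is invertible or has dimension divisible by at least two distinct primes (its dimension divides $60$ by Theorem~\ref{divis}), and one seeks a contradiction from the $S$-matrix orthogonality relations: Lemma~\ref{key} forces many entries $s_{X,Y}$ to vanish, and a congruence and Galois argument applied to $\sum_Y \tfrac{s_{X,Y}}{d_X}d_Y=0$ and to $\sum_Y |s_{X,Y}/d_X|^2=\FPdim(\Z(\C))/d_X^2$ eliminates this case, treating the subcategory $\B$ spanned by the invertible objects separately (if $\B$ is non-degenerate one splits it off via Theorem~\ref{MuTh}, while if $\B$ is degenerate its M\"uger center is already symmetric). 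Having found a nontrivial symmetric subcategory, I would upgrade it to a Tannakian one by Deligne's theorem (Theorem~\ref{DeT}): a symmetric category of dimension $>2$ contains a nontrivial Tannakian subcategory, and the residual slightly degenerate, SuperVec situation is disposed of as in the proof of Theorem~\ref{pqr}, by locating an odd-dimensional simple object through Corollary~\ref{odd} and invoking Proposition~\ref{almnontan}. This $S$-matrix case analysis in dimension $3600$, complicated by the invertible objects and the SuperVec case, is where the genuine difficulty lies.

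\textbf{Conclusion.} Once $\C$ is known to be weakly group-theoretical, it is simple and not pointed, so by the Proposition above on simple weakly group-theoretical categories $\C\cong\Rep(G)$ for a finite non-abelian simple group $G$ with $|G|=\FPdim(\C)=60$. As $A_5$ is the unique simple group of order $60$, we conclude $\C\cong\Rep(A_5)$.
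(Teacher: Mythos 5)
Your overall architecture is coherent and genuinely different from the paper's: reduce to finding a nontrivial Tannakian subcategory of $\Z(\C)$, conclude weak group-theoreticity, and finish with the proposition on simple weakly group-theoretical categories. The reductions and the conclusion are fine, up to one imprecision: after de-equivariantizing you may land on the center of a category of dimension $30=2\cdot 3\cdot 5$ (when the Tannakian subcategory is $\Rep(\Bbb Z/2\Bbb Z)$), to which Theorem~\ref{Th2} does not apply; you need Theorem~\ref{pqr} there, and "iterating" as you propose would instead require re-running your main step inside a $900$-dimensional non-degenerate category. The serious problem, however, is that the main step --- which you correctly flag as the heart --- is not carried out, and the tools you cite cannot carry it out. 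By Theorem~\ref{divis}(i) the possible non-prime-power dimensions of simples in $\Z(\C)$ are $6,10,12,15,20,30$, and no two of these are coprime, so Lemma~\ref{key} forces no entry of the $S$-matrix to vanish at all. The lcm-divisibility/Galois argument in the style of Lemma~\ref{symsub} does exclude $12$ and $20$ (this is exactly the Lemma inside the paper's proof of Theorem~\ref{60}), but the surviving configuration, all nontrivial simples of dimension in $\{6,10,15,30\}$, is numerically consistent (e.g.\ $3599=3\cdot 15^2+2\cdot 30^2+8\cdot 10^2+9\cdot 6^2$) and is not killed by orthogonality or Galois arguments internal to $\Z(\C)$: for $X$ of dimension $15$ the row norm is $3600/15^2=16$, while the divisibility available from ${\rm lcm}(15,d_Y)=30$ only produces summands divisible by $4$, so no contradiction arises. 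Your fallback for the SuperVec case also fails here, since Proposition~\ref{almnontan} requires a simple object of \emph{odd prime power} dimension, and the only odd dimension available is $15$; the counting that excludes $qr$-dimensional objects in Theorem~\ref{pqr} does not port, because $1+2\cdot 15^2<1800$.

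The paper closes precisely this case with different tools, which exploit the relation between $\Z(\C)$ and $\C$ rather than the internal structure of $\Z(\C)$ alone. It splits on whether $\Z(\C)$ is simple. If $\Z(\C)$ is not simple, it picks a nontrivial subcategory $\D$ of dimension at most $60$; since $\C$ is simple and has no nontrivial gradings, the forgetful functor restricts to an equivalence $\D\cong\C$, so $\C$ is itself braided; it contains prime-power-dimensional simples, so it cannot be non-degenerate by Corollary~\ref{primpow}, hence by simplicity it is symmetric, hence $\C\cong\Rep(G)$ with $G$ simple of order $60$, i.e.\ $G\cong A_5$. If $\Z(\C)$ is simple, then after the dimension lemma the object $I(\be)$ must decompose according to the unique expression $59=15+10+10+6+6+6+6$, so it has exactly $8$ simple summands; by \cite[Proposition 5.6]{ENO} the category $\C$ then has at least $8$ simple objects, hence one of Frobenius--Perron dimension $2$, contradicting the Nichols--Richmond theorem \cite{NR}. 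These ingredients --- the induction functor $I$, the counting result of \cite{ENO}, and the theorem of \cite{NR} --- are exactly what is missing from your sketch, and without them (or a substitute) the $\{6,10,15,30\}$ configuration cannot be ruled out.
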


\begin{proof} 
It is clear that the category $\C$ is integral, since otherwise it would
contain a nontrivial subcategory $\C_{\rm ad}$, see \cite[Proposition
8.27]{ENO}.

Consider first the case when $\Z(\C)$ is not simple. In this case
$\Z(\C)$ contains  a non-trivial subcategory $\D$ of dimension
$\leq 60$ (if the dimension of $\D$ is $>60$, we will replace $\D$ with its
M\"uger centralizer $\D'$). 

Let $F:\Z(\C)\to \C$ be the forgetful
functor. The fusion subcategory $F(\D) \subset
\C$ is nontrivial, since $\C$ has
no nontrivial gradings.  
Since $\C$ is simple, this means that 
$\D$ has dimension exactly $60$ and 
$F: \C \cong  \D$ is an equivalence, so $\C$ is a braided category. 
Clearly, $\C$ contains objects of prime power dimension, 
since for any representation $60=1+\sum n_i^2$, $n_i>1$, some
$n_i$ has to be a prime power. Thus, 
$\C$ cannot be non-degenerate by Corollary~\ref{primpow}.
Therefore, $\C$ must be symmetric, i.e. $\C \cong \Rep(G)$ for 
a simple group $G$. Since $A_5$ is the unique simple group of order $60$,
we obtain $G \cong A_5$. 

Now let us assume that $\Z(\C)$ is simple. 

\begin{lemma} 
The dimensions of nontrivial simple objects in $\Z(\C)$ are 
among the numbers $6,10,15,30$.
\end{lemma}

\begin{proof} 
Theorem \ref{divis} (i) 
and Corollary \ref{primpow} show that possible dimensions 
of nontrivial simple objects of $\Z(\C)$ are $6,10,12,15,20,30$.
Thus we just need to exclude the dimensions $60/p$, where $p=3$ or
$5$. In both cases the argument
is parallel to the proof of Lemma \ref{symsub}.
Namely, let $X$ be a simple object of $\Z(\C)$ of dimension
$60/p$. We have the orthogonality relation
$$
\sum_{Y\in {\rm Irr}\Z(\C)}\frac{s_{X,Y}}{d_X}d_Y=0.
$$
Hence there exists $Y_0\in {\rm Irr}\Z(\C)$ of dimension
divisible by $p$ such that $s_{X,Y_0}\ne 0$
(otherwise the left hand side will be equal to $1$ modulo $15/p$). 
Since $\frac{s_{X,Y_0}}{d_X}$ and $\frac{s_{X,Y_0}}{d_{Y_0}}$ 
are algebraic integers, we have
that $\frac{s_{X,Y_0}}{60}$ is an algebraic integer; 
thus $\frac{s_{X,Y_0}}{d_X}$ is divisible
by $p$. The rest of the argument is word for word as in
the proof of Lemma \ref{symsub}.  
\end{proof}

There is only one decomposition of 
$60-1=59$ into the sum of numbers $6,10,15,30$,
namely $59=15+10+10+6+6+6+6$. 
It follows that the object $I(\be)\in \Z(\C)$ 
(where $I:\C\to \Z(\C)$ is the induction functor) 
has precisely
8 simple direct summands, 
hence $\dim \Hom(I(\be),I(\be))\ge 8$. Then \cite[Proposition 5.6]{ENO}
implies that the category $\C$ contains at least 8 simple
objects. Hence $\C$ contains a 
nontrivial simple object with the square of dimension 
less or equal to $\frac{59}{7}<9$; thus this object is
of dimension 2. But it is proved in 
\cite{NR} that an integral simple fusion category cannot  
contain a simple object of dimension $2$. 
\footnote{To be more
precise, the argument in \cite{NR} is for comodule categories 
over finite dimensional cosemisimple Hopf algebras, but it uses only the
Grothendieck ring arguments, and therefore applies verbatim to the
case of fusion categories.} The theorem is proved.
\end{proof}

\begin{corollary}
Up to isomorphism, the only semisimple Hopf algebras of dimension $60$ without 
non-trivial Hopf algebra quotients 
are the group algebra $\Bbb C[A_5]$  
and its twisting deformation  $\Bbb C[A_5]_J$ constructed in \cite{Nk1}.
\end{corollary}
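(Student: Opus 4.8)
The plan is to reduce the classification of such Hopf algebras to the classification of fiber functors on the fusion category $\Rep(A_5)$. First I would translate the Hopf-theoretic hypothesis into categorical language. For a semisimple Hopf algebra $H$, the quotient Hopf algebras $H\twoheadrightarrow L$ are in bijection with the fusion subcategories $\Rep(L)\subseteq \Rep(H)$: pulling back along $q\colon H\to L$ embeds $\Rep(L)$ as a full tensor subcategory (those $H$-modules killing the defining Hopf ideal), and conversely any fusion subcategory is $\Rep(L)$ for a canonical quotient $L$ by Tannakian reconstruction. Hence the assumption that $H$ has no nontrivial proper Hopf quotients is equivalent to saying that $\Rep(H)$ has no nontrivial proper fusion subcategories, i.e. that $\Rep(H)$ is a \emph{simple} fusion category. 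Since $\dim H=60$, it is a simple fusion category of dimension $60$, so Theorem~\ref{60} applies and produces a tensor equivalence $\Rep(H)\cong \Rep(A_5)$.

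Next I would invoke Tannakian reconstruction in the other direction. Transporting the canonical (forgetful) fiber functor of $\Rep(H)$ through the equivalence $\Rep(H)\cong \Rep(A_5)$ exhibits $H$ as the Hopf algebra reconstructed from $\Rep(A_5)$ equipped with some fiber functor $F$. Thus every $H$ as in the statement is a twist (gauge deformation) of $\Bbb C[A_5]$, and the isomorphism classes of such $H$ correspond to fiber functors on $\Rep(A_5)$ up to the action of the tensor autoequivalences of $\Rep(A_5)$. So the problem becomes: classify the fiber functors on $\Rep(A_5)$.

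For the final step I would use the known classification of fiber functors on the representation category of a finite group $G$: they are parametrized, up to $G$-conjugacy, by pairs $(K,\mu)$ where $K\subseteq G$ is a subgroup and $\mu\in H^2(K,\Bbb C^*)$ is a \emph{nondegenerate} class (equivalently the twisted group algebra $\Bbb C_\mu[K]$ is a single matrix algebra, so that $K$ is of central type). The decisive group-theoretic input is that a group of central type must have order a perfect square. The subgroup orders occurring in $A_5$ are $1,2,3,4,5,6,10,12,60$, whose only perfect-square values are $|K|=1$ and $|K|=4$. The case $K=1$ gives the trivial fiber functor and recovers the group algebra $\Bbb C[A_5]$. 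For $|K|=4$ the subgroup is a Sylow $2$-subgroup, which is a Klein four group $(\Bbb Z/2\Bbb Z)^2$ since $A_5$ has no element of order $4$; all such subgroups are conjugate, and $H^2((\Bbb Z/2\Bbb Z)^2,\Bbb C^*)=\Bbb Z/2\Bbb Z$ has a unique nondegenerate class. This single pair produces the nontrivial twist $\Bbb C[A_5]_J$ of \cite{Nk1}. The two resulting Hopf algebras are non-isomorphic (one is cocommutative, the other is not), and both indeed satisfy the hypothesis, since $A_5$ is simple and $\Rep(\Bbb C[A_5]_J)\cong\Rep(A_5)$ is a simple fusion category; hence $\Bbb C[A_5]$ and $\Bbb C[A_5]_J$ are precisely the categories on the list.

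I expect the main obstacle to be this last, fiber-functor step. One must correctly invoke (and, if the references are not self-contained, verify) that twists of $\Bbb C[G]$ are governed by central-type subgroups carrying nondegenerate cocycles, and then carry out the elementary but essential group-theoretic check that among subgroups of $A_5$ only the trivial subgroup and the Klein four subgroups are of central type. By contrast, the categorical translation of the first paragraph and the reconstruction of the second are standard, so the weight of the argument rests on identifying the surviving pair $(K,\mu)$ and matching it with the twist of \cite{Nk1}.
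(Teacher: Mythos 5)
Your proposal is correct and follows essentially the same route as the paper: simplicity of $\Rep(H)$ plus Theorem~\ref{60} gives $\Rep(H)\cong\Rep(A_5)$, so $H$ is a twisting deformation of $\Bbb C[A_5]$, and the classification of twists by nondegenerate $2$-cocycles on subgroups (the paper cites \cite{EG3}) singles out the Klein four (Sylow $2$-) subgroups, all conjugate, yielding exactly $\Bbb C[A_5]_J$. Your added details --- the Hopf-quotient/fusion-subcategory dictionary and the perfect-square order check ruling out all other subgroups --- are correct elaborations of steps the paper leaves implicit.
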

\begin{proof}
Let $H$ be such a Hopf algebra. Then $\Rep(H)$ is simple and hence $\Rep(H) \cong \Rep(A_5)$
by Theorem~\ref{60}. Therefore, $H$ is a twisting deformation  of
$\Bbb C[A_5]$. By \cite{EG3} twisting
deformations of a group algebra $\Bbb C[G]$ 
correspond to non-degenerate $2$-cocycles on subgroups
of $G$. Each Sylow $2$-subgroup of $A_5$ admits a unique 
(up to cohomological equivalence)
non-degenerate $2$-cocycle. All such subgroups are conjugate  
and so the corresponding
twisting deformations are gauge equivalent and yield the example of \cite{Nk1}.
\end{proof}

\begin{remark}
The property of a Hopf algebra having no non-trivial 
quotients is {\em stronger} than that
of having no nontrivial normal Hopf subalgebras. In particular,
there exist other semisimple Hopf algebras of dimension $60$ 
without nontrivial normal Hopf subalgebras.
Such are, e.g., $\Bbb C[A_5]_J^*$ 
and  the example constructed in \cite[4.2]{GN}.
\end{remark}

\subsection{Non-simple fusion categories of dimension 60}

\begin{theorem}\label{di60}
Any fusion category $\C$ of dimension $60$ is weakly group-theoretical.
\end{theorem}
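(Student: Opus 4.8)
The plan is to reduce the problem to the already-established results, dealing separately with the case where $\C$ is integral and the case where it is not. First I would dispose of the non-integral case: if $\C$ is weakly integral of dimension $60$ but not integral, then by \cite[Proposition 8.27]{ENO} the adjoint subcategory $\C_{\rm ad}$ is proper, so $\C$ carries a nontrivial faithful universal grading by some group $U_\C$. The trivial component $\C_{\rm ad}$ has dimension strictly smaller than $60$, hence dividing $60$, so it is weakly group-theoretical by the results of Section~\ref{Sect 8} (being of dimension $p^aq^b$ or smaller, it is even solvable); since weakly group-theoretical categories are closed under extensions by Proposition~\ref{basicp1-bis}, $\C$ itself is weakly group-theoretical.

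The heart of the matter is therefore the integral case, and the natural strategy mirrors the proof of Theorem~\ref{60}: examine the center $\Z(\C)$ and produce a nontrivial Tannakian subcategory in it, after which Proposition~\ref{basicp1} together with the machinery behind Theorem~\ref{Th2} delivers weak group-theoreticity. Since we no longer assume $\C$ is simple, I would first handle the easy situation where $\C$ itself has a nontrivial proper fusion subcategory $\D$: then $\D$ and a corresponding component of a quotient both have dimension a proper divisor of $60$, hence are solvable by Theorem~\ref{Th2} and Theorem~\ref{pqr}, and one reconstructs $\C$ from these pieces. More precisely, if $\C$ admits a nontrivial grading the extension argument of the previous paragraph applies, while if $\C$ has a nontrivial fusion subcategory $\D$ one can try to exhibit $\C$ as built up from $\D$ and a smaller category via the structural results already proved. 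Thus the genuinely new work occurs only when $\C$ is \emph{simple} as a fusion category — but that case is precisely Theorem~\ref{60}, which identifies $\C\cong\Rep(A_5)$, manifestly weakly group-theoretical.

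The main obstacle I anticipate is the passage from ``$\Z(\C)$ contains a nontrivial proper subcategory'' to ``$\C$ is weakly group-theoretical'' when $\C$ has no nontrivial grading and is not itself simple. The clean tool here is to show $\Z(\C)$ contains a nontrivial \emph{Tannakian} subcategory $\Rep(G)$; by Corollary~\ref{primpow}, since $60 = 1 + \sum n_i^2$ forces some simple object of $\Z(\C)$ to have prime-power dimension, $\Z(\C)$ is not non-degenerate and hence $\Z(\C)$ contains a nontrivial symmetric subcategory. The remaining difficulty is upgrading symmetric to Tannakian: by Deligne's theorem (Theorem~\ref{DeT}) and the remark following it, a symmetric subcategory of dimension $>2$ automatically contains a Tannakian one, so the only obstruction is a symmetric subcategory equal to ${\rm SuperVec}$. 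I would eliminate this residual case by the same slightly-degenerate analysis used in Proposition~\ref{almnontan} and in the proof of Theorem~\ref{pqr}: pass to the M\"uger centralizer of ${\rm SuperVec}$, which is slightly degenerate of dimension $30$, locate a simple object of odd prime-power dimension via Corollary~\ref{odd}, and invoke Proposition~\ref{almnontan} to produce the desired nontrivial Tannakian subcategory.

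Once a nontrivial Tannakian subcategory $\Rep(G)\subset\Z(\C)$ is in hand, Proposition~\ref{basicp1} exhibits $\Z(\C)$ at the top of a chain of the required form, so $\C$ is weakly group-theoretical; equivalently, via Theorem~\ref{Morita eq graded} one sees that $\C$ is Morita equivalent to a group extension of a category of smaller dimension, which is weakly group-theoretical by Theorem~\ref{Th2}, and weak group-theoreticity is preserved under the relevant operations by Proposition~\ref{basicp1-bis}. I expect the bookkeeping of dimensions — ruling out the various symmetric subcategories of dimension exactly $2$ and verifying the divisibility constraints — to be the most delicate part, but each such subcase should close by the character-theoretic $S$-matrix arguments already developed in Sections~\ref{Sect 7} and~\ref{Sect 8}.
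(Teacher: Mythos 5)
Your overall skeleton matches the paper's — reduce to the integral case, produce a nontrivial Tannakian subcategory in $\Z(\C)$, and handle the residual ${\rm SuperVec}$ case by the slightly degenerate machinery — but the two steps that carry the actual weight of the proof are wrong or missing. First, your route to a prime-power-dimensional simple object is invalid: the decomposition $60=1+\sum n_i^2$ concerns the simple objects of $\C$, which is not braided, so Corollary~\ref{primpow} cannot be applied to it; what you need is a simple object of prime power dimension in $\Z(\C)$, which has dimension $3600$, and a priori all of its nontrivial simple objects could have dimensions in $\{6,10,12,15,20,30\}$. This is exactly where the paper does its new work (and where your phrase ``one can try to exhibit $\C$ as built up from $\D$ and a smaller category'' has no supporting theorem): given a nontrivial proper fusion subcategory $\D\subset\C$, the paper constructs a subalgebra $B$ of the canonical algebra $A=I(\be)\in\Z(\C)$ of dimension $d=\FPdim(\C)/\FPdim(\D)$, a proper divisor of $60$, by dualizing the inclusion $\C\to\C\boxtimes\D^{\op}$ and taking $B=F^\vee(\be)$. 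If $A$ contained no invertible objects and no simples of prime power dimension, one would need $d-1=\sum n_i$ with $n_i\in\{6,10,12,15,20,30\}$, which fails by inspection for every $d\mid 60$, $1<d<60$. Without this (or an equivalent) construction, the non-simple integral case simply does not reduce to anything already proved.

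Second, your treatment of the ${\rm SuperVec}$ case has the same kind of gap. The M\"uger centralizer $\Z$ of ${\rm SuperVec}$ in $\Z(\C)$ has dimension $1800$, not $30$, and Corollary~\ref{odd} only produces an \emph{odd-dimensional} simple object, which may well have dimension $15$ — not a prime power — so Proposition~\ref{almnontan} cannot be invoked directly. The paper needs two additional ingredients here: Lemma~\ref{2and4}, which handles simple objects of dimension $2$ and $4$ in $\Z$ (Proposition~\ref{almnontan} requires an \emph{odd} prime), and the subalgebra $A_+=\chi\otimes\Ker(c^2-1)\subset A$, which lies in $\Z$ and, by the same arithmetic as above, contains a simple object of prime power dimension. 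Your proposal gets the non-integral case and the reduction-to-Tannakian framing right, and correctly defers the simple case to Theorem~\ref{60}, but as written the central arguments would not go through.
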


\begin{proof}
If $\C$ is simple, then the result follows from the previous
subsection. So let us consider the case when $\C$ is not simple. 
In this case, we may assume that $\C$ is integral (otherwise 
$\C$ is $\Bbb Z/2\Bbb Z$ graded, hence solvable). 

Let $\Z(\C)$ be the  center of $\C$. 
It suffices to show that $\Z(\C)$ contains a nontrivial Tannakian
subcategory. 

\begin{lemma}\label{symc}
$\Z(\C)$ contains a nontrivial symmetric subcategory. 
\end{lemma}

\begin{proof}
Recall that in $\Z(\C)$ we have a standard commutative algebra $A=I(\bold
1)$, whose category of modules is $\C$. 
We may assume that $A$ does not contain nontrivial invertible
objects; otherwise $\C$ is faithfully graded by a nontrivial
group, and we are done. 

Let $\D\subset\C$ be a nontrivial proper fusion subcategory, of 
codimension $1<d<60$ (i.e., dimension $60/d$). 
We claim that there exists an algebra $B\subset A$ in $\Z(\C)$ of dimension
$d$. Indeed, consider the category $\E$ of pairs 
$(X,\eta)$, where $X$ is an object of $\C$, and $\eta: \otimes_{\C,\D}\to
\otimes_{\D,\C}$ is a functorial isomorphism satisfying the hexagon
relation (where $\otimes _{\C,\D}$, $\otimes _{\D,\C}$ 
are the tensor product functors
$\C\times \D\to \C$, $\D\times \C\to \C$). In other words, 
$\E$ it is the dual category to 
$\C\boxtimes \D^{op}$ with
respect to the module category $\C$. Thus, we have
a diagram of tensor functors $\Z(\C)\to \E\to \C$,
whose composition is the standard forgetful functor 
$\Z(\C)\to \C$. Denote the functor $\Z(\C)\to \E$ by $F$. 
This functor is surjective, as it is dual to the inclusion
$\C\to \C\boxtimes \D^{op}$. Let $F^\vee$ be the adjoint functor
to $F$. Then $B=F^\vee(\bold
1)\subset A$ is the desired subalgebra. 
 
The existence of the algebra $B$ implies that $I(\bold 1)\in
\Z(\C)$ contains a simple object of prime power dimension. 
Indeed, if not, then the
dimensions of simple objects can be $1,6,10,12,15,20,30$. 
On the other hand, the dimension of $B$ is some divisor $d$ of $60$,
$1<d<60$. Thus, we have $d-1=\sum n_i$, where 
$n_i=6,10,12,15,20,30$. It is checked by inspection that this is
impossible.   
\end{proof}

Consider the nontrivial symmetric category contained in $\Z(\C)$.
If its dimension is bigger than 2, it contains a nontrivial Tannakian
subcategory, and we are done. So it remains to consider the case 
when the only nontrivial symmetric subcategory of $\Z(\C)$ is
SuperVec. We make this assumption in the remainder of the
section. 

Let $\Z$ be the M\"uger centralizer of
the subcategory SuperVec. Proposition \ref{almnon}(i) 
implies that if $\Z$ contains a simple object of odd prime power
dimension, then it contains a nontrivial Tannakian subcategory. 
Let us now consider the case of even prime power dimension. 

\begin{lemma}\label{2and4}
(i) If $\Z$ contains a 2-dimensional simple object $X$, then it
contains a nontrivial Tannakian subcategory. 

(ii) If $\Z$ contains a 4-dimensional simple object $X$, then it
contains a nontrivial Tannakian subcategory. 
\end{lemma}

\begin{proof}
(i) Let $\B$ be the category spanned by the invertible objects 
of $\Z$. By Proposition \ref{almnon}(ii), $\B={\rm SuperVec}\boxtimes
\B_0$, where $\B_0$ is a non-degenerate pointed category.
Then $\Z=\B_0\boxtimes \B_0'$, and $\B_0'$ contains only two
invertible objects, $\bold 1$ and $\chi$ (the generator of
SuperVec). Clearly, $X\otimes X^*\in \B_0'$, and $\chi\otimes
X\ne X$ by Proposition \ref{almnon}(i), so $X\otimes X^*=\bold
1\oplus Y$, where $Y$ is 3-dimensional. Thus we 
are done by Proposition \ref{almnontan}.

(ii) Arguing as in (i), we see that $X\otimes X^*=\bold 1+...$, 
where $...$ is a direct sum of simple objects of $\Z$ of dimension $>1$.
Moreover, at least one of these dimensions must be odd, since the total is 15. 
If there is an object of dimension $3$ or $5$, then we are
done by Proposition \ref{almnontan}. Otherwise, 15 is the
smallest odd dimension that can occur (by Theorem \ref{divis}), 
so we must have $X\otimes X^*=\bold 1\oplus Y$, where $Y$ has
dimension 15. Then we would have $s_{XX^*}=\lambda+15\mu$, where $\lambda,\mu$
are roots of unity. On the other hand, $s_{XX^*}$ is divisible by
$d_X=4$. Thus $\lambda-\mu$ is divisible by $4$. So
$\lambda-\mu=0$, and 
$X$ projectively centralizes its dual, hence itself. Thus $Y$ centralizes
itself, so it generates a symmetric category, which must contain 
a nontrivial Tannakian subcategory. 
\end{proof} 

Lemma \ref{2and4} shows that 
it suffices to prove that $\Z$ contains a simple
object of prime power dimension; then we will be done by 
Proposition \ref{almnontan}.

To show this, let $A_+\subset A$ be $\chi\otimes K$, where $K$ is
the kernel of $c^2-1$ (squared braiding minus one) on
$\chi\otimes A$. Then $A_+$ is a subalgebra of $A$ contained in
$\Z$, and by the argument at the end of the proof of Lemma \ref{symc}, $A_+$
contains a simple object of prime power dimension.
The theorem is proved. 
\end{proof} 

\section{Relation  with previous results on classification of semisimple Hopf 
algebras and fusion categories.}
\label{Sect 10}

1. Theorem \ref{Th2} for $s=0$, i.e. for fusion categories of prime power
dimension, follows from \cite{ENO}, where it is shown that any
such category is cyclically nilpotent. For $r=s=1$ (i.e. for
fusion categories of dimension $pq$), Theorem \ref{Th2} follows
from the paper \cite{EGO}, where such categories are classified
(we note that the main results of \cite{EG2} and \cite{EGO} are
trivial consequences of Theorem \ref{Th2}).

2. Semisimple Hopf algebras of small dimension were studied
extensively in the literature, see \cite[Table 1]{N2} for the
list of references. In particular, in the monograph \cite{N2} it
is shown that semisimple Hopf algebras of dimension $< 60$ are
either upper or lower semisolvable up to a cocycle twist, and in 
\cite{N6} it is shown that any semisimple Hopf algebra of
dimension $<36$ is group-theoretical (this is not true for
dimension 36, see \cite{Nk2}). Our Theorems~\ref{Th2} 
and \ref{pqr} along with \cite{DGNO1} further
describe the structure (of representation categories) of such
Hopf algebras in group-theoretical terms.

Numerous classification results and non-trivial examples of Hopf
algebras of dimension $pq^n$ are obtained in \cite{Ma, EG2, F, G,
N2, N3, N4, N5, IK}.  Some of these results use an assumption of
Hopf algebras involved being of Frobenius type. Our
Theorem~\ref{Th1} shows that this assumption is always satisfied.

Semisimple Hopf algebras of dimension $pqr$ we studied in
\cite{AN, N3}. In particular, Hopf algebras of dimension $30$ and
$42$ were classified as Abelian extensions (Kac algebras).  Our
Corollary~\ref{Hopfal} is a generalization of these results. It
implies that \cite[Theorem 4.6]{N3} can be used to obtain a complete 
classification of such Hopf algebras.

\section{Questions}
\label{Sect 11}

We would like to conclude the paper with two questions. 

{\bf Question 1.} Does there exist a fusion category 
that does not have the strong Frobenius property?

{\bf Question 2.} Does there exist a weakly integral fusion category which is
not weakly group-theoretical?

\bibliographystyle{ams-alpha}

\end{document}